\newtheorem{theorem}{Theorem}[section]
\newtheorem{lemma}[theorem]{Lemma}
\newtheorem{prop}[theorem]{Proposition}
\newtheorem{rem}[theorem]{Remark}
\newtheorem{defi}[theorem]{Definition}
\newcommand{\mref}[1]{{(\ref{#1})}}
\newcommand{\reftheo}[1]{Theorem~\ref{#1}}
\newcommand{\supp}{\operatorname{supp}}
\newcommand{\mS}{\mathcal{S}}
\newcommand{\mB}{\mathcal{B}}
\newcommand{\mC}{\mathcal{C}}
\newcommand{\mE}{\mathcal{E}}
\newcommand{\mR}{\mathcal{R}}
\newcommand{\mF}{\mathcal{F}}
\newcommand{\mP}{\mathcal{P}}
\newcommand{\mD}{\mathcal{D}}
\newcommand{\mQ}{\mathcal{Q}}
\newcommand{\mL}{\mathcal{L}}
\newcommand{\Int}{\mbox{Int}}
\newcommand{\bd}{{\bf d}}
\newcommand{\rN}{\mathbb{R}}
\newcommand{\mT}{\mathcal{T}}
\newcommand{\mO}{\mathcal{O}}
\newcommand{\uS}{\mathbb{S}}
\newcommand{\intl}{\int\limits}
\newcommand{\cT}{\mathbb{T}}
\newcommand{\ma}{{\bf a}}
\newcommand{\mb}{{\bf b}}
\newcommand{\mc}{{\bf c}}
\newcommand{\mV}{\mathcal{V}}
\newcommand{\mI}{\mathcal{I}}
\newcommand{\wf}{\mbox{WF}}
\newcommand{\eg}{\varepsilon}
\newcommand{\llg}{\lambda}
\newcommand{\Llg}{\Lambda}
\newcommand{\ag}{\alpha}
\newcommand{\bg}{\beta}
\newcommand{\sg}{\sigma}
\newcommand{\Og}{\Omega}
\newcommand{\Ga}{\Gamma}
\newcommand{\ga}{\gamma}
\newcommand{\pdh}{\partial}
\title{On Artifacts in Limited Data Spherical Radon Transform: Curved Observation Surface}
\author{ Lyudmyla L. Barannyk\footnote{Department of Mathematics, University of Idaho, 875 Perimeter Drive, Moscow, Idaho 83844, USA. Email:~barannyk@uidaho.edu.}, J\"urgen Frikel\footnote{Department of Applied Mathematics and Computer Science, Technical University of Denmark, Matematiktorvet 303, 2800 Kgs. Lyngby, Denmark. Email: jyfr@dtu.dk.}, and Linh V. Nguyen\footnote{Department of Mathematics, University of Idaho, 875 Perimeter Drive, Moscow, Idaho 83844, USA. 
Email:~lnguyen@uidaho.edu.}}
\begin{document}

\maketitle

\begin{abstract} 
We study the limited data problem of the spherical Radon transform in two and three dimensional spaces with general acquisition surfaces. In such situations, it is known that the application of filtered-backprojection reconstruction formulas might generate added artifacts and degrade the quality of reconstructions. In this article, we explicitly analyze a family of such inversion formulas, depending on a smoothing function that vanishes to order $k$ on the boundary of the acquisition surfaces. We show that the artifacts are $k$ orders smoother than their generating singularity. Moreover, in two dimensional space, if the generating singularity is conormal satisfying a generic condition then the artifacts are even $k+\frac{1}{2}$ orders smoother than the generating singularity. Our analysis for three dimensional space contains an important idea of lifting up a space. We also explore the theoretical findings in a series of numerical experiments. Our experiments show that a good choice of the smoothing function might lead to a significant improvement of reconstruction quality.
\end{abstract}

\section{Introduction}\label{S:intro} Let $\mS \subset \rN^n$ be a convex closed smooth hyper-surface. We consider the following spherical Radon transform $\mR f$ of a function $f$ defined in $\rN^n$
$$\mR f (z, r) = \int\limits_{\uS(z,r)} f(y) \, d\sg(y),\quad (z,r) \in \mS \times (0,\infty).$$
Here, $\uS(z,r)$ is the sphere centered at $z$ of radius $r$, and $d\sg$ is its surface measure. This transform appears in several imaging modalities, such as thermo/photoacoustic tomography (e.g., \cite{FPR,FHR,KKun}), ultrasound imaging (e.g., \cite{norton79,norton81}), SONAR (e.g., \cite{QuintoSONAR}), and inverse elasticity (e.g., \cite{BuKar}). For example, in thermo/photoacoustic tomography (TAT/PAT), $f$ is the initial ultrasound pressure generated by the thermo/photo-elastic effect. It contains useful information about the inner structure of the tissue, which can be used, e.g., for cancer detection. On the other hand, the knowledge of $\mR(f)(z,.)$ can be extracted from the ultrasound signals collected by a transducer located at $z \in \mS$, which is called the {\bf observation} surface. One, therefore, can concentrate on finding $f$ given $\mR(f)$. The same problem also arises in other aforementioned image modalities.



\medskip

It is commonly assumed that $f$ is supported inside the bounded domain $\Og$ whose boundary is $\mS$. Let us discuss an inversion formula under this assumption. Let $\mP: C_0^\infty(\rN_+) \to C^\infty(\rN_+)$ be the pseudo-differential operator defined by \begin{equation} \label{E:P0} \mP(h)(r) =  \int\limits_{\rN} \int\limits_{\rN_+} e^{i(s^2-r^2) \llg} \, |\llg|^{n-1} \, h(s) \, ds \, d\llg,\end{equation}
and $\mB:C^\infty(\mS \times \rN_+) \to C^\infty(\Og)$ be the back-projection type operator \begin{eqnarray*}\mB(g)(x) = \frac{1}{2 \pi^n} \int\limits_{\mS} \left<z-x,\nu_z \right> \, g(z,|x-z|) \,d\sg(z).\end{eqnarray*}
When $\mS$ is an $(n-1)$-dimensional ellipsoid, one has the following inversion formula \cite{Kun07,natterer2012photo,Halt-Inv} \footnote{The reader is referred to, e.g., \cite{FPR,FHR, palamodov2012uniform,Salman,palamodov2014time} for other inversion formulas.}
 \begin{equation} \label{E:inversion} f(x) = \mB \mP \mR f (x), \quad \mbox{ for all } x \in \Og. \end{equation}
We note here that formula (\ref{E:inversion}) was written in other forms in the above references. The above form, presented in \cite{AMP}, is convenient to analyze from the microlocal point of view. Another advantage of the above form is that it can be implemented straight forwardly: 1) $\mP$ can be computed fast by using Fast Fourier Transform (FFT) and 2) $\mB$ only involves a simple integration on $\mS$. Fig.~\ref{fig:full-circle} is the result of our implementation when $n=2$ and $\mS$ is the unit circle. The image size is $N=2048$ pixels. The sampling data has the resolution of $n_a=n_r=2048$ for the spatial (angular) variable $z=(\cos \theta,\sin \theta)$ and radial variable $r \in [0,2]$. The reconstruction is almost perfect.

\begin{figure}[ht]
\centering
 \subfloat[short for lof][Original phantom]{
   \includegraphics[width=0.25\linewidth]{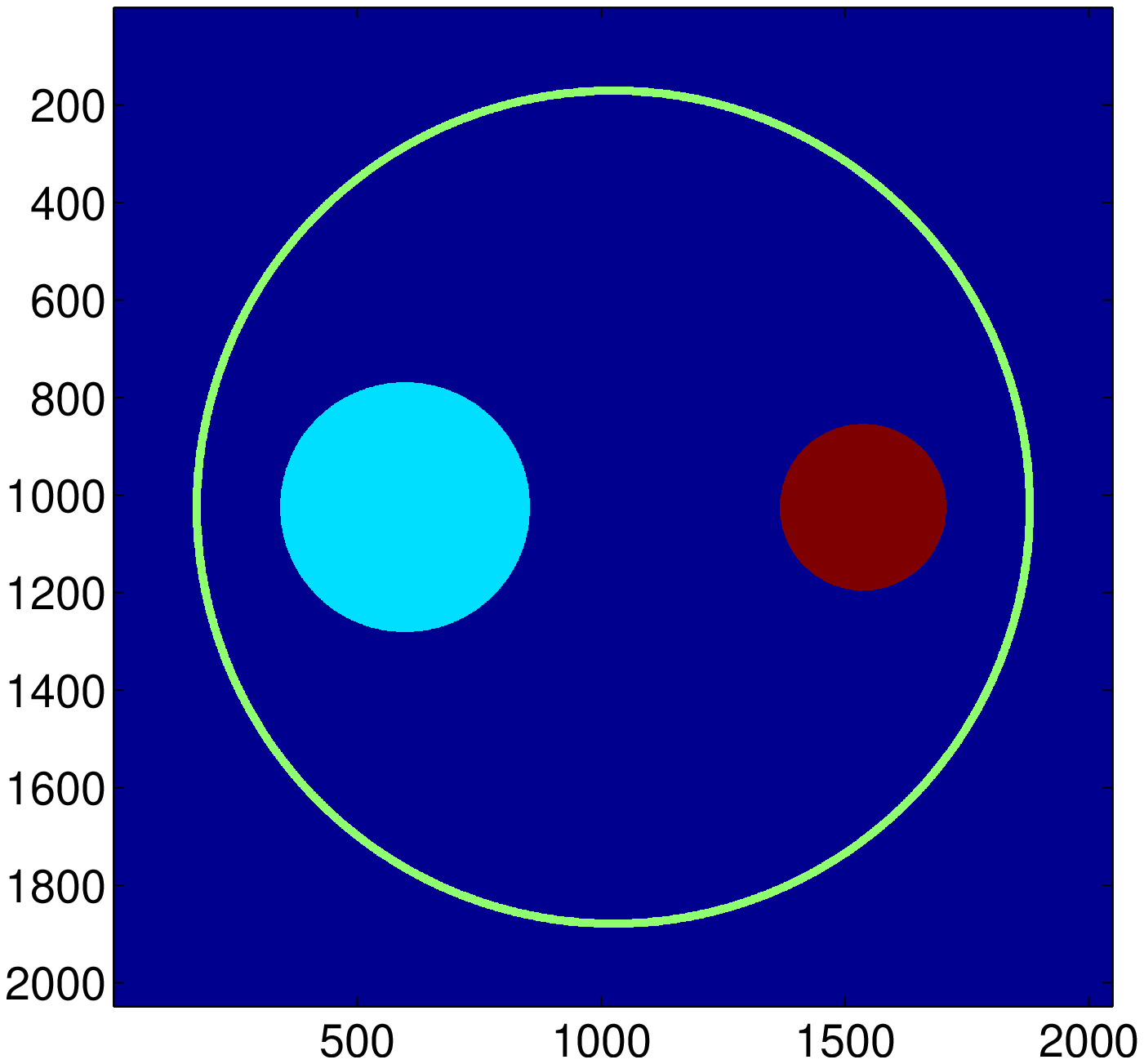}  
   \label{subfig:fig1}
 }
  \hspace{15pt}
 \subfloat[short for lof][Reconstruction]{
   \includegraphics[width=0.25\linewidth]{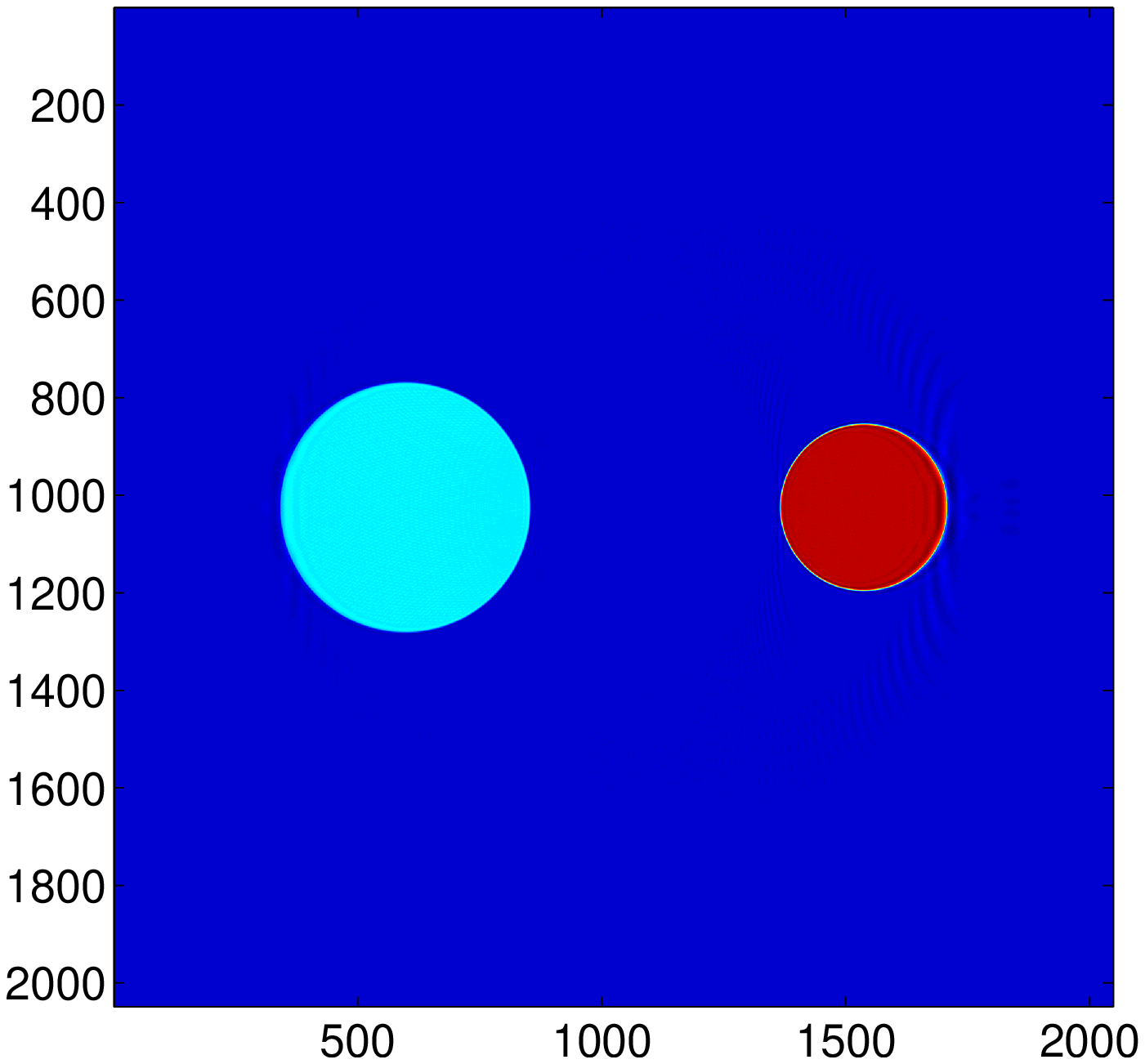}
   \label{subfig:fig2}
}
\hspace{15pt}
 \subfloat[short for lof][Line profile along the central horizontal line: the phantom (left) and reconstruction (right) ]{
   \includegraphics[width=0.30 \linewidth]{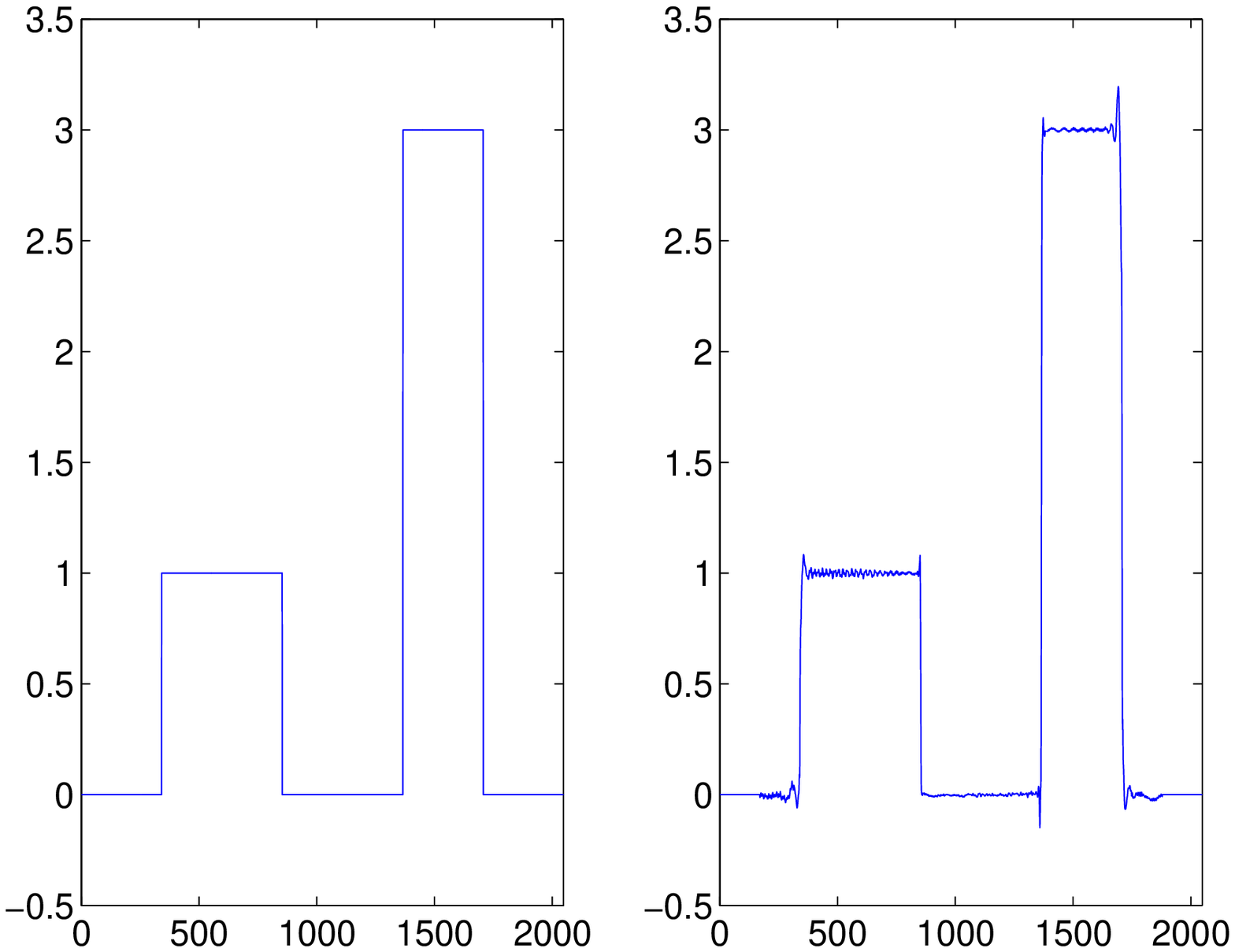}
   \label{subfig:fig2}
}

\caption[short for lof]{Reconstruction using the operator $\mB \mP \mR$ (for $n=2$) when $\mS$ is the \emph{unit circle}. Here, the number of angular samples as well as the number of radial samples of the data $\mR f$ was chosen as $n_a=n_r=2048$.}
\label{fig:full-circle}

\end{figure}

\medskip

\noindent When $\mS$ is a general convex surface, the operator on the right hand side of (\ref{E:inversion}) might not be the identity. However, it only differs from the identity by a compact operator (see \cite{natterer2012photo,Halt-Inv}), which is a pseudo-differential operator of order $-1$ (as shown in \cite{AMP}). Moreover, from the numerical experiments, we observe that $\mB \mP \mR f$ and $f$ are very close, even when $\mS$ is not any ellipse/ellipsoid. For example, in Fig.~\ref{fig:full-polar} we present the reconstruction using $\mB \mP \mR$ for $\mS$ being the polar curve defined by $$\left\{(x,y): x=\frac{1}{2}\big((2 +  \cos \theta) \cos \theta -1\big), \, y= \frac{1}{2}(2 +  \cos \theta) \sin \theta),\, 0 \leq \theta \leq 2 \pi \right\}.$$ The reconstruction is almost perfect. 

\begin{figure}[ht]
\centering
 \subfloat[short for lof][Original phantom]{
   \includegraphics[width=0.25\linewidth]{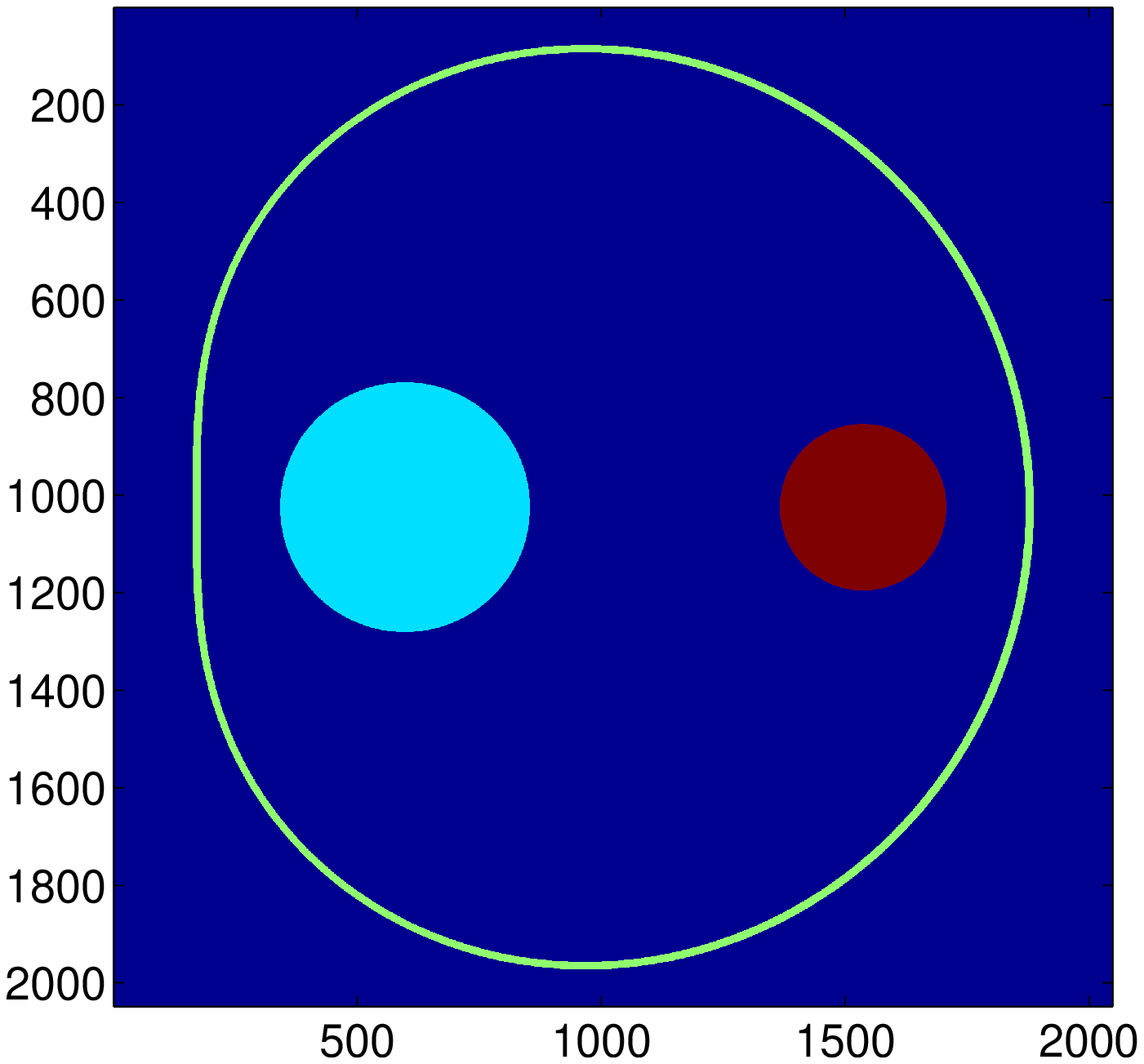}
   \label{subfig:fig1}
 }
  \hspace{15pt}
 \subfloat[short for lof][Reconstruction]{
   \includegraphics[width=0.25\linewidth]{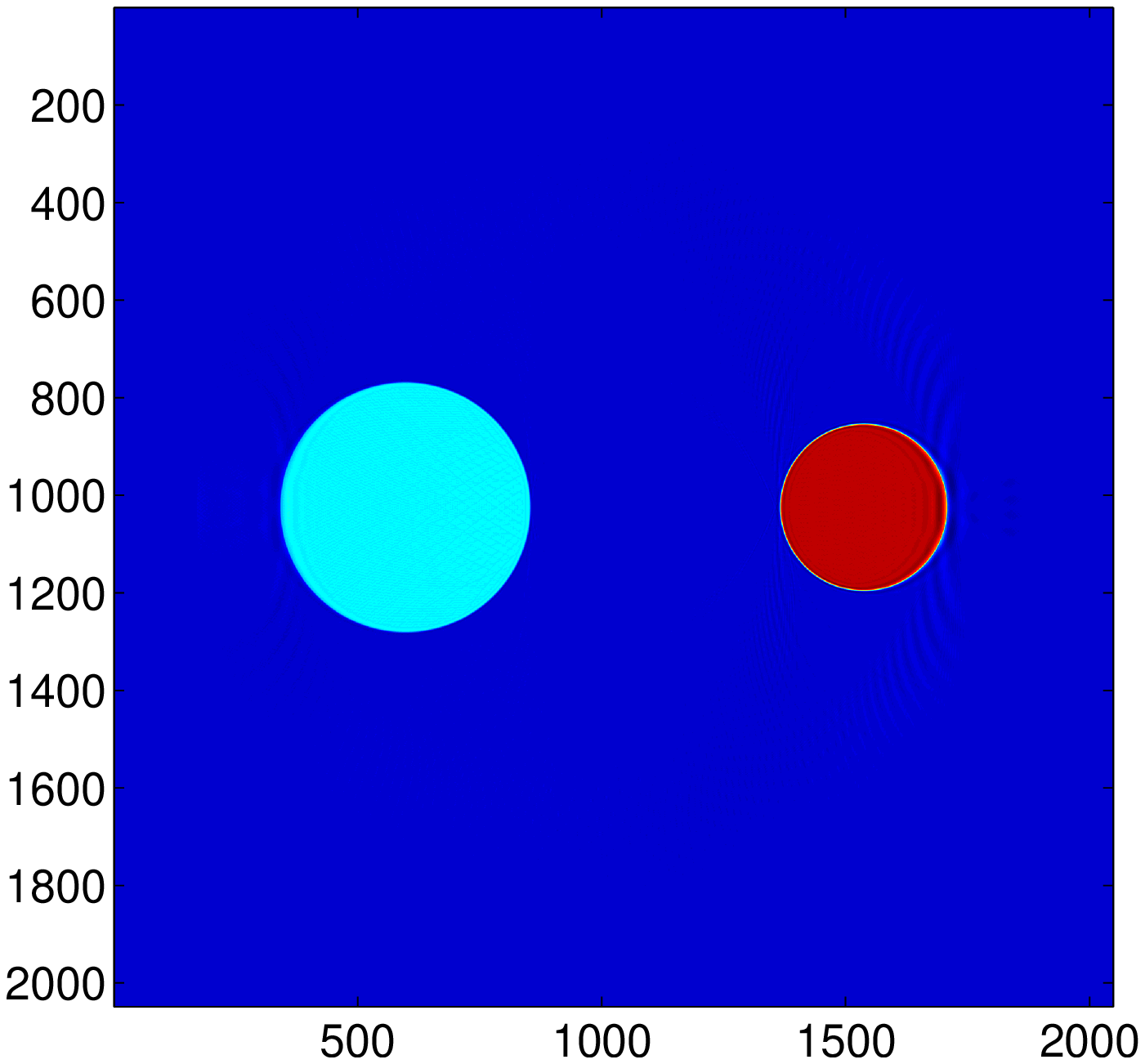} 
   \label{subfig:fig2}
}
\hspace{15pt}
 \subfloat[short for lof][Line profile along the central horizontal line: the phantom (left) and reconstruction (right) ]{
   \includegraphics[width=0.30 \linewidth]{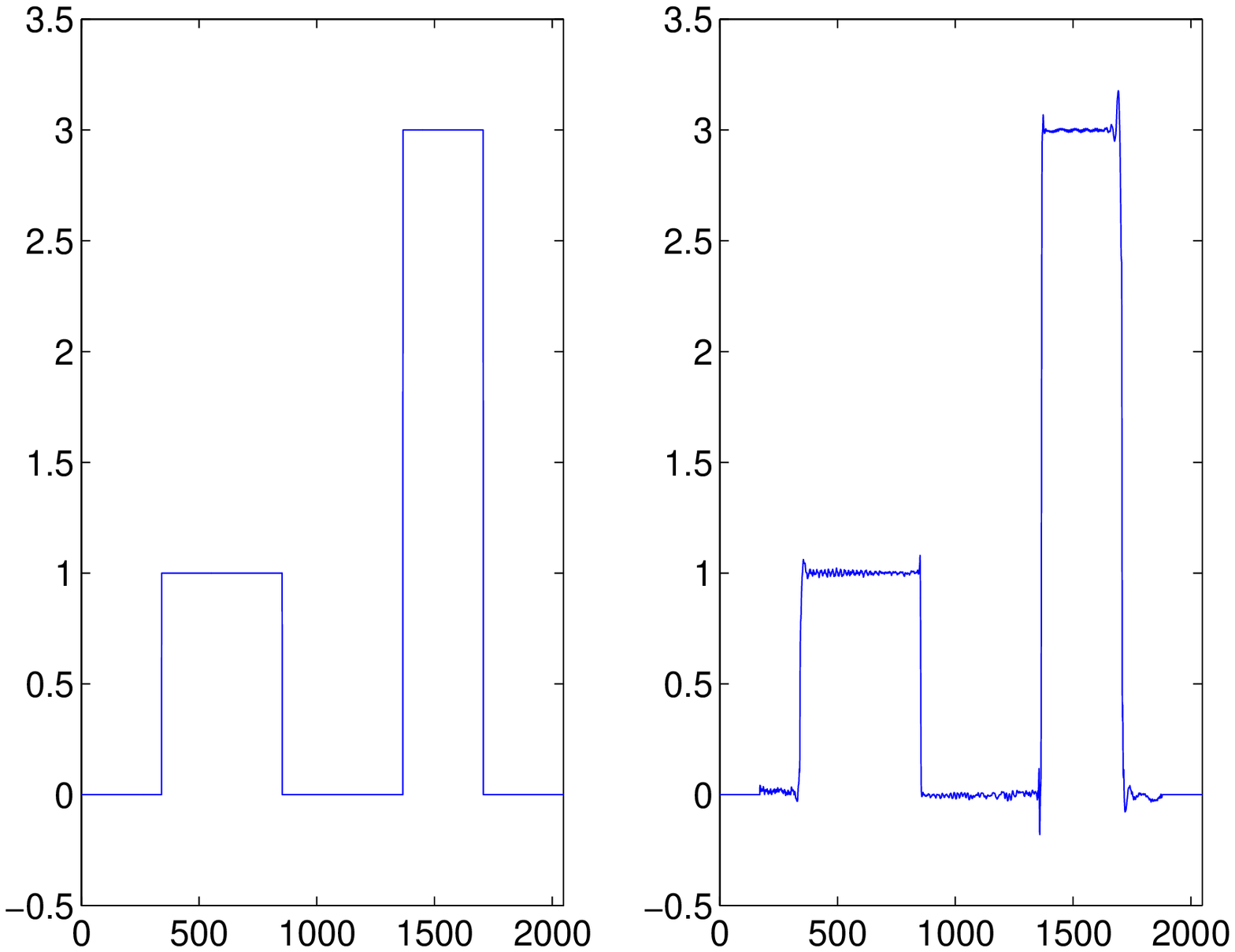} 
   \label{subfig:fig2}
}

\caption[short for lof]{Reconstruction using the operator $\mB \mP \mR$ (for $n=2$) when $\mS$ is a \emph{polar curve}. Here, the number of angular samples as well as the number of radial samples of the data $\mR f$ was chosen as $n_a=n_r=2048$.}

\label{fig:full-polar}
\end{figure}

%
%
%

\medskip

In this article, we are interested in the limited data problem. That is, the knowledge of $\mR f$ is only available on a closed proper subset $\Gamma \subset \mS$ with smooth boundary $\pdh \Ga$ and nontrivial interior $\Int(\Ga)$ (see, e.g., \cite{XWAK08,QuintoSONAR}).  It is natural to consider the following formula
\begin{equation} \label{no_smoothing}
 \mT_0 f (x) := \mB \chi_\Ga \mP \mR f(x),
\end{equation}
where $\chi_\Ga$ is the characteristic function of $\Ga$. As we observe from Fig.~\ref{fig:limited-cir}, formula (\ref{no_smoothing}) reconstructs some singularities and also smoothens out some singularities of the original image. Moreover, it also creates some artifacts (added singularities) into the picture. 
\begin{figure}[ht]
\centering
 \subfloat[short for lof][Original phantom]{
   \includegraphics[width=0.25\linewidth]{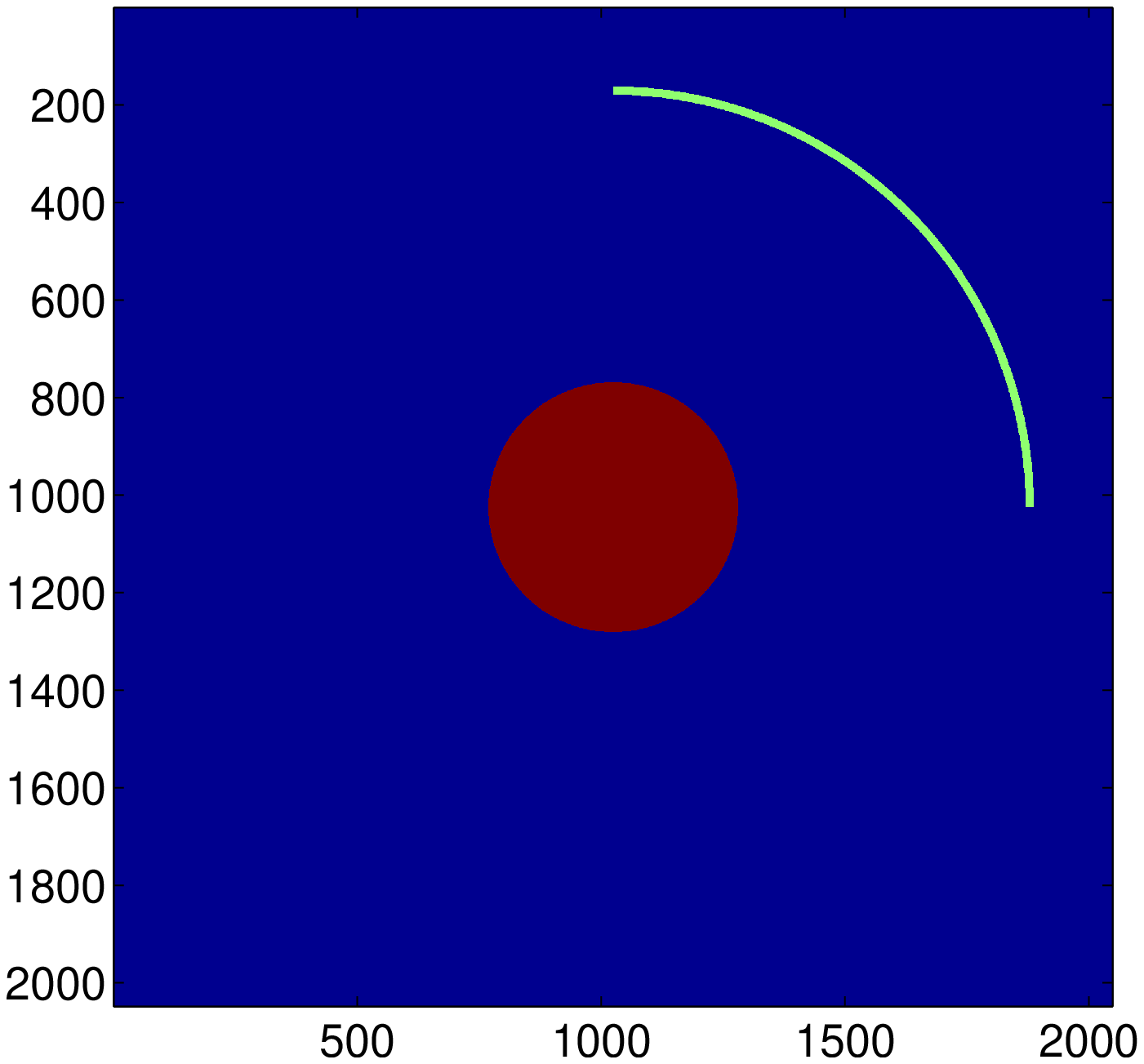}
   \label{subfig:fig1}
 }
  \hspace{20pt}
 \subfloat[short for lof][Reconstruction]{
   \includegraphics[width=0.25\linewidth]{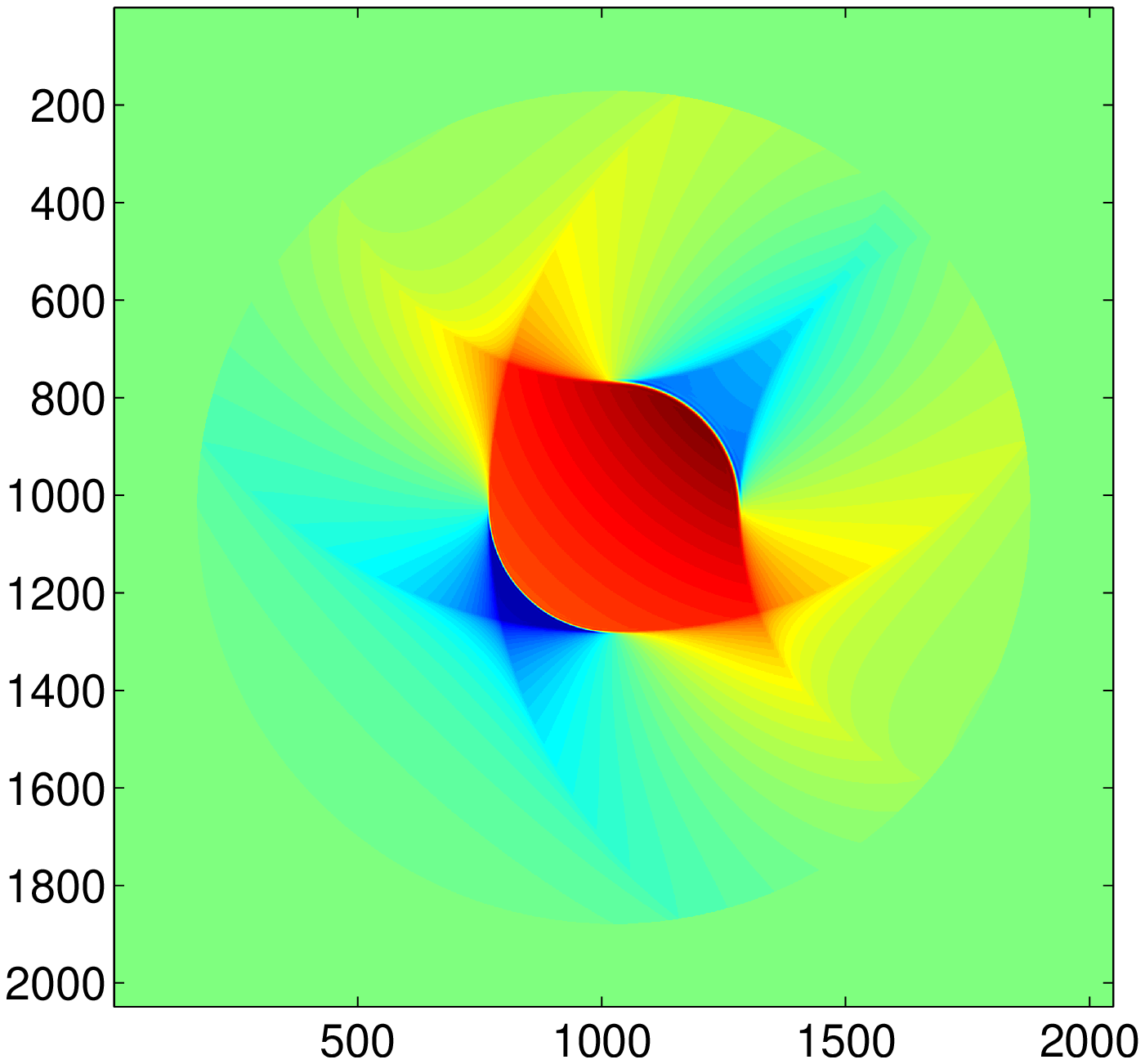}
   \label{subfig:fig2}
}
\caption[short for lof]{Reconstruction from limited view data, collected on a \emph{quarter of a unit circle}, using the standard reconstruction operator $\mT_{0}$ with no artifact reduction. The acquisition surface is illustrated by the green line in the phantom image.}
\label{fig:limited-cir}
\end{figure}
More detailed discussion will be presented in Sections~\ref{S:2D}~and ~\ref{S:3D} (see also Section~\ref{S:Num} for numerical demonstrations). The operator $\mT_0$ reconstructs all the visible singularities. However, the artifacts it generates are quite strong. We now introduce a generalization of $\mT_0$  in order to reduce the artifacts. Let us consider
\begin{equation} \label{E:mT}
 \mT f (x) := \mB \chi \mP \mR f (x).
\end{equation}
Here, $\chi \in C^\infty(\Ga)$ and $\chi = 0$ on $\mS \setminus \Ga$. Moreover, we assume further that $\chi > 0$ on the interior $\Int(\Ga)$ of $\Ga$ and $\chi$ vanishes to order $k$ on the boundary $\pdh \Ga$. Of course, $\mT=\mT_0$ if $\chi =\chi_\Ga$.

\medskip

In this article, we will study formula (\ref{E:mT}). Namely, we will analyze which singularities are reconstructed and how strong the reconstructed singularities are, compared to the original ones. Moreover, we will describe how the artifacts are generated by (\ref{E:mT}) and how strong they are.

\medskip

Let us discuss here the main idea of our analysis. We first follow the approach in \cite{AMP} to represent $\mT$ as an oscillatory integral. To that end, let $\mu$ be the Schwartz kernel of $\mT$. Then, it can be written as (see \cite{AMP}) 
\begin{equation} \label{E:mu-ori} \mu(x,y) = \frac{1}{2 \, \pi^2}\int\limits_{\Ga} \int\limits_{\rN} \int\limits_{\rN^n} e^{i \left( |y-z|^2-|x-z|^2 \right) \llg} \; |\llg|^{n-1} \,\left<z-x,\nu_z\right> \chi(z) \; d\llg \; d\sg(z).\end{equation}
By the simple change of variables $(z,\llg) \to \xi=2 (z-x) \, \llg$,  \begin{eqnarray}\label{E:mu} \nonumber
\mu(x,y) &=& \frac{1}{ 2 \, (2 \pi)^2} \intl_{\rN^n} e^{i \big(\left<x-y, \xi \right> +  \, |x-y|^2 \frac{|\xi|}{2 |x- z_+(x,\xi)| } \big)} \chi(z_+) d \xi \\ \nonumber &+&  \frac{1}{ 2 \, (2 \pi)^2} \intl_{\rN^n} e^{i \big(\left<x-y, \xi \right> - \, |x-y|^2 \frac{|\xi|}{2 |x- z_-(x,\xi)|} \big)}  \chi(z_+) \,d \xi \\ &=& \mu_+(x,y) + \mu_-(x,y) . 
\end{eqnarray}
\noindent Here, $z_\pm = z_\pm(x,\xi)$ are the intersections of $\mS$ with the positive and negative rays 
\begin{eqnarray*}
R_+(x,\xi)=\{(x+t \xi): t \geq 0\},\\
R_-(x,\xi) =\{(x+t \xi): t \leq 0\}.
\end{eqnarray*}

To illuminate the idea, let us at the moment assume that $\chi$ vanishes to {\bf infinite} order at the boundary of $\Ga \subset \mS$, then the functions $a_\pm(x,\xi) =\chi(z_\pm) $ are (smooth) symbols of order zero. Then, due to \cite[Theorem~3.2.1]{Soggeb}, $\mT$ is a pseudo-differential operator ($\Psi$DO) of order zero whose principal symbol is (see \cite{US}, and also \cite{SUSAR}, for the same result in a more general framework): $$\sg_0(x,\xi) = \frac{1}{2} \big[\chi(z_+) + \chi(z_-) \big].$$
One important consequence of $\mT$ being a $\Psi$DO is that it does not generate the artifacts into the picture. Moreover, let us denote $\ell(x,\xi) = R_+(x,\xi) \cup R_-(x,\xi)$, the line passing through $x$ along direction $\xi$. Then, the following discussions hold.
\begin{itemize}

\item[a)] Let us denote \begin{eqnarray*} \mV &=& \{(x,\xi) \in \cT^* \Og \setminus 0: \ell(x,\xi) \mbox{ intersects } \Int(\Ga) \}.\end{eqnarray*} 
Then, any singularities $(x,\xi)$ of $f$ in this zone generates a corresponding singularity of on the observed data $g:= \mR(f)|_\Ga$. It is, therefore, called the {\bf visible zone} (see, e.g., \cite{louis00local,palamodov00reconstruction,XWAK08}). Let $(x,\xi) \in \mV$, then either $\chi(z_+)>0$ or $\chi(z_-)>0$. Therefore, $\sg_0(x,\xi)>0$. That is, $\mT$ is an elliptic pseudo-differential operator of order $0$ near $(x,\xi)$. Due to the standard theory of pseudo-differential operators (see Lemma~\ref{L:Pet}), $(x,\xi) \in \wf_s(f)$ if and only if $(x,\xi) \in \wf_s(\mT f)$. That is,  all the visible singularities of $f$ are reconstructed by $\mT$ with the correct order. We notice that one visible singularity may be either visible in two directions, when both $z_+(x,\xi) $ and $z_-(x,\xi)$ belong to $\Int(\Ga)$, or in one direction, when only one of $z_+(x,\xi)$ or $z_-(x,\xi)$ belongs to $\Ga$. We will say that they are {\bf doubly} visible  and {\bf singly} visible, respectively. 


\item[b)] On the other hand, let us denote \begin{eqnarray*}  \mI &=& \{(x,\xi) \in \cT^* \Og \setminus 0: \ell(x,\xi) \mbox{ does not intersect } \Ga \}.\end{eqnarray*} Then, $\mI$ is called {\bf invisible zone}, since any singularity of $f$ in this zone does not generate any singularity of $g$. A singularity of $f$ in this zone is called {\bf invisible}.
We notice that for each $(x,\xi) \in \mI$, then $\chi(z_\pm)=0$. That is, the full symbol of $\mu$ is zero near $(x,\xi)$. Therefore, due to the standard theory of pseudo-differential operators, $(x,\xi) \not \in \wf(\mT f)$, even if $(x,\xi) \in \wf(f)$. That is, all the invisible singularities are completely smoothen out by $\mT$.

\end{itemize}

Let us now consider the case of our interest: $\chi$ only vanishes to a {\bf finite} order $k$ on the boundary of $\Gamma$ ($k$ can be zero as in case of $\mT_0$). Then, $\mT$ is not a $\Psi$DO anymore, since $a_\pm$ are not smooth. Therefore, $\mT$ may generate artifacts into the picture, as shown Fig.~\ref{fig:limited-cir} where $\mT=\mT_0$. 

\medskip

Our results (see Sections~\ref{S:2D} and \ref{S:3D}), show that the effect of $\mT$ on the zones $\mV$ and $\mI$ is exactly what we discuss above for the case of infinitely smooth $\chi$. This can be seen from the facts that $a_\pm(x,\xi)$ are smooth on these two zones. The artifacts, on the other hand, come from the {\bf boundary zone} where at least one of $a_+$ or $a_-$ is non-smooth:
\begin{eqnarray*} \pdh \mV = \{(x,\xi) \in \cT^* \Og \setminus 0: \ell(x,\xi) \mbox{ intersects } \pdh(\Ga)\}.\end{eqnarray*}
We will characterize how these artifacts are generated and how strong they are. To that end, we will make use of the formula (\ref{E:mu-ori}). Compared to (\ref{E:mu}), the formula \mref{E:mu-ori} still keeps track of the geometric information of $\mR$, which is helpful to understand the generation artifacts. Moreover, we will study (\ref{E:mu-ori}) as a Fourier integral operator, whose order determines the strength of artifacts (compared to the original singularity generating them).

\medskip

In term of geometric description of the artifacts, we will take advantage of the technique developed in \cite{FQ14} (see also \cite{FQ-Para}). In \cite{Artifact-sphere-flat}, the third author studied the strength of the artifacts in $\mS$ is a hyperplane (that is, $\Ga$ is flat). In this article, we study the problem for any convex smooth surface $\mS$. We will restrict ourselves to the two and three dimensional problems (i.e., $n=2,3$), since they are the most practical ones. We will follow the microlocal analysis technique in \cite{Artifact-sphere-flat}. However, due to the generality of the geometry involved, the arguments are more sophisticated. Moreover, for the three dimensional problem, we introduce a new idea of lifting up the space, which is simple but interesting. It is worth mentioning that similar problem has been studied for the X-ray (or classical Radon) transform \cite{Kat-JMAA,RamKat-AML1,RamKat-AML2,FQ13,Streak-Artifacts}. 

\medskip

The article is organized as follows. In Section~\ref{S:2D}, we state and prove the main results for the two dimensional problem. In Section~\ref{S:3D}, we state and prove the results for three dimensional problem. We then present some numerical experiments for the two dimensional problem in Section~\ref{S:Num}. Finally, we recall some essential background in microlocally analysis in the Appendix.

\section{Two dimensional problems and some numerical demonstrations} \label{S:2D} 


Let us consider $n=2$. We assume that $\mS$ is a smooth convex curve, and $\Ga$ is a connected piece of $\mS$. For our convenience, we assume that $\Ga$ is arc-length parametrized by the smooth function $$z=z(s):[a,b] \to \rN^2,$$ and its end points are $\ma=z(a)$ and $\mb=z(b)$. 
Let us define the function $h=h(s):[a,b] \to \rN$ by 
$$h(s) = \chi(z(s)).$$
Then, $h(s)$ vanishes to order $k$ at $s=a$ and $b$, and $h(s)>0$ for $a<s<b$. 

\medskip

\noindent We define the following canonical relations in $(\cT^* \Og \setminus 0) \times (\cT^* \Og \setminus 0)$
$$\Delta_\mV = \{(x,\xi; x, \xi): (x,\xi) \in \overline \mV\},$$
and
$$\Llg_\ma= \{(x,\tau(x-\ma);\, y,\tau(y-\ma)) \in \cT^* \Og \times \cT^* \Og: |x-\ma| = |y-\ma|, \tau \neq 0\},$$
$$\Llg_\mb= \{(x,\tau(x-\mb);\, y,\tau(y-\mb)) \in \cT^* \Og \times \cT^* \Og: |x-\mb| = |y-\mb|, \tau \neq 0\}.$$
We notice that $(x,\xi;\, y, \eta) \in \Llg_\ma$ iff $(y,\eta)$ is in the boundary zone, corresponding to $\ma$, and $(x,\xi)$  is obtained from $(y,\eta)$ by rotating around the corresponding boundary point $\ma$. Similar description holds for $\Llg_\mb$. 



\begin{prop} \label{P:wavefront}
We have $$\wf(\mu)' \subset \Delta_\mV \cup \Llg_\ma \cup \Llg_\mb.$$
\end{prop}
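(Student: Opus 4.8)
The plan is to analyze the oscillatory integral representation \mref{E:mu-ori} as a Fourier integral distribution and compute its wavefront set directly. First I would write $\mu$ as a composition by fixing the structure: the kernel is obtained by integrating over $\Ga$ the expression $e^{i(|y-z|^2 - |x-z|^2)\llg}$ times a symbol-type amplitude, and the singularities arise precisely where we cannot integrate by parts in $\llg$ and $s$ (the arc-length parameter on $\Ga$). So the first step is to identify the phase function $\Phi(x,y,z(s),\llg) = (|y-z(s)|^2 - |x-z(s)|^2)\llg$ and determine its critical set and the associated Lagrangian. Stationarity in $\llg$ forces $|y-z(s)| = |x-z(s)|$, i.e. $z(s)$ lies on the perpendicular bisector of $x$ and $y$; stationarity in $s$ (when the amplitude is smooth, away from the endpoints) forces $\langle z'(s), y-z(s)\rangle = \langle z'(s), x - z(s)\rangle$ in a way that, combined with the previous condition, pins down $x = y$ — this yields $\Delta_\mV$. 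When the amplitude fails to be smooth — that is, at $s = a$ or $s = b$ where $h$ only vanishes to finite order $k$ — we cannot integrate by parts in $s$, so the $s$-stationarity is replaced by evaluation at the endpoint, giving the contributions $\Llg_\ma$ and $\Llg_\mb$.

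The second step is to carefully separate these cases using a partition of unity on $\Ga$: choose cutoffs supported near $\ma$, near $\mb$, and away from both endpoints. On the interior piece, $\chi(z(s))$ is smooth, the amplitude is a genuine symbol, and standard stationary phase / FIO calculus (as invoked in the excerpt via Soggeb Theorem 3.2.1 and the references \cite{US,SUSAR}) shows the corresponding piece of $\mu$ has wavefront set contained in $\Delta_\mV$ — indeed it is a $\Psi$DO microlocally. On each endpoint piece, say near $\ma$, I would freeze the non-smoothness: write $h(s) = (s-a)^k \tilde h(s)$ with $\tilde h$ smooth and positive at $a$, so the amplitude is $(s-a)^k$ times a smooth symbol. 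The factor $(s-a)^k$ is a classical conormal distribution in $s$ associated to $\{s = a\}$; integrating it against the oscillatory kernel produces a Fourier integral distribution whose canonical relation is obtained by composing the conormal bundle $N^*\{s=a\}$ with the Lagrangian of the smooth phase. Concretely, on the critical set one has $|y - z(a)| = |x - z(a)|$ and the covectors are $\xi = \tau(x - \ma)$, $\eta = \tau(y-\ma)$ for the same $\tau \neq 0$ (coming from the $\llg$-integration and the derivative of the phase in $x$ and $y$). This is exactly $\Llg_\ma$. The same argument near $\mb$ gives $\Llg_\mb$.

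The third step is bookkeeping: assemble $\wf(\mu)' \subset \Delta_\mV \cup \Llg_\ma \cup \Llg_\mb$ from the three pieces, and check the primed-relation convention is applied consistently (the $'$ flips the sign of the second covector, and the definitions of $\Llg_\ma, \Llg_\mb$ in the excerpt are already written in the "composition-ready" form matching $\wf(\mu)'$). I would also note that the condition "$\ell(x,\xi)$ intersects $\Int(\Ga)$" defining $\overline\mV$ is precisely the condition for $z_\pm(x,\xi)$ — hence a stationary point of the $s$-integral — to exist, which is why $\Delta_\mV$ rather than the full diagonal appears.

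The main obstacle I expect is the endpoint analysis: making rigorous the claim that integrating the conormal factor $(s-a)^k$ (times a smooth symbol) against the oscillatory kernel $e^{i\Phi}|\llg|^{n-1}$ yields a Lagrangian distribution with canonical relation exactly $\Llg_\ma$, and not something larger. One has to verify that the phase $\Phi$, restricted to $\{s = a\}$, is a nondegenerate (clean) phase parametrizing $\Llg_\ma$ — i.e. that the map from critical points to $(x,\xi;y,\eta)$ has the right rank — and that no extra singularities are introduced by the interaction of the $\llg$-integration (which is itself only conditionally convergent and must be interpreted distributionally, with $|\llg|^{n-1}$ a homogeneous symbol) with the endpoint. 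The flat-surface case handled in \cite{Artifact-sphere-flat} provides the template, but as the authors note, the curvature of $\mS$ makes the geometry of the critical set — and the verification of cleanliness — genuinely more delicate, since $z(s)$ is no longer affine in $s$ and the bisector condition $|x - z(s)| = |y - z(s)|$ couples nontrivially with the endpoint constraint.
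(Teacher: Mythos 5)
Your proposal is correct in outline but reaches the conclusion by a genuinely different route. The paper never analyzes the single oscillatory integral \mref{E:mu-ori} to prove this proposition; instead it factors the operator as $\mB \circ (\chi \, \mP \mR)$, bounds $\wf(\chi\mu_\mR)$ by the \emph{product rule} for wave front sets (treating $\chi$ as a distribution conormal to $\{z=\ma\}\cup\{z=\mb\}$, which is what produces the extra pieces $\mC_a$, $\mC_b$), and then applies the composition rule with $\wf(\mu_\mB)\subset \mC_\mR^t$. That calculus gives the containment with almost no computation, but yields no order information. Your direct stationary-phase/conormal analysis of the kernel is essentially the machinery the paper reserves for Theorem~\ref{T:Main1}~b) (the decomposition $\chi=\chi_\ma+\chi_\mb$, the cutoff near an endpoint, and the integration by parts of Lemma~\ref{L:sym}, which is exactly your ``$h(s)=(s-a)^k\tilde h(s)$'' step); so you are front-loading the harder argument, and in exchange you would get the order $-k-\tfrac12$ on $\Llg_\ma$, $\Llg_\mb$ essentially for free.

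One step in your sketch is asserted without justification and is where the geometry actually enters: you claim that $s$-stationarity, $\langle x-y, z'(s)\rangle=0$, combined with the bisector condition $|x-z(s)|=|y-z(s)|$, ``pins down $x=y$.'' This is false for a general curve; a priori the interior critical set contains mirror pairs $x\neq y$ reflected across the tangent line at $z(s)$, which would contribute a reflection-type canonical relation in addition to $\Delta_\mV$. What rules this out is convexity of $\mS$: if $z(s)$ is equidistant from $x$ and $y$ and $z'(s)\perp(x-y)$, then the tangent line to $\mS$ at $z(s)$ is the perpendicular bisector of $[x,y]$ and hence passes through the midpoint $\tfrac12(x+y)\in\Og$, contradicting the fact that a tangent line to the boundary of a convex domain cannot meet its interior. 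The paper buries this same fact inside the unproved identity $\mC_\mR^t\circ\mC_{\mR,\chi}=\Delta_\mV$ and makes it explicit only later, in the proof of Theorem~\ref{T:Main1}~b); your write-up needs it explicitly at the interior-stationarity step, since without it the containment in $\Delta_\mV$ does not follow.
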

\noindent Here, $\wf(\mu)'$ is the twisted wave front set of $\mu$:
$$\wf(\mu)' =\{(x,\xi; y, -\eta): (x,\xi; y, \eta) \in \wf(\mu) \}.$$
Proposition \ref{P:wavefront} was proved in \cite{Artifact-sphere-flat} when $\Ga$ is a line segment. The proof carries naturally to the general curve $\Ga$ without any major changes. We present it here for the sake of completeness and convenience in later discussion.

\begin{proof}[\bf Proof of Proposition~\ref{P:wavefront}] Due to the composition rule for wave front sets (see Theorem~\ref{T:compose} in Appendix), we obtain \footnote{Since $\mP$ is a pseudo-differential operator, it does not generate new wave front set elements. This well-known pseudo-locality property of a pseudo-differential operator is recalled in Appendix~\ref{A:PDO}, see (\ref{E:wave-front-PDO}).}
\begin{equation} \label{E:wfmu} \wf(\mu)' \subset \wf(\mu_\mB)' \circ \wf(\chi \mu_\mR)',\end{equation}
where $\mu_\mB$ and $\mu_\mR$ are the Schwartz kernel of $\mB$ and $\mR$, respectively. Let us now proceed to analyze the right hand side of the above inclusion.

\medskip


We note that $\mR$ is an FIO with the phase function (see, e.g., \cite{louis00local,Pal-IPI,AMP})
$$\phi(z,r,x,\llg) = (|x-z|^2 - r^2)  \, \llg.$$ 

For the sake of simplicity, for an each $(z,r) = (z(s),r) \in \mS \times \rN_+$, we use the notation $(z,r,p,q)$ for $(z,r,p \, ds, q \, dr) \in \cT^* (\mS \times \rN_+)$. Due to Theorem~\ref{T:wave-FIO} (see Appendix), we obtain, by letting $\tau = 2\llg$ 
\begin{multline*} \wf(\mu_\mR) \subset \mC_\mR: = \{\big(z=z(s),r,\, \tau  \left<x-z(s),z'(s) \right>, - \tau  r ; \, x,\tau \, (x-z) \big): \\ s \in \rN, r \in \rN_+, x \in  \Og, |x-z|=r,\, \tau \neq 0\}.
\end{multline*}

\medskip

\noindent Also considering $\chi(z)$ as a function of $(z,r,x)$, we have
$$\wf(\chi) \subset \{\big(\ma,r, \, \tau', 0; \, x,{\bf 0}\big): \tau' \neq 0\} \cup \{\big(\mb,r, \, \tau', 0; \, x,{\bf 0}\big): \tau' \neq 0\}.$$

\medskip

\noindent Applying the product rule for wave front sets (see Theorem~\ref{T:product} in Appendix), we obtain
\begin{equation} \label{E:wf-pro} \wf(\chi \mu_\mR) \subset \mC_{\mR,\chi} \cup \mC_a \cup \mC_b,\end{equation}
where
\begin{eqnarray*}\mC_{\mR,\chi} &=& \mC_{\mR} \cap \{z \in \supp(\chi) \}, \\[4 pt] \mC_a &=& \{\big(\ma,r, \, \tau \, \left<x-\ma, z'(a) \right>+ \tau', - \tau \, r; \, x,\tau \, (x-\ma)): |x-\ma|=r,\,\tau' \neq 0\}, \\[4 pt] \mC_b &=& \{\big(\mb,r, \, \tau \, \left<x-\mb, z'(b) \right>+ \tau', - \tau \, r; \, x,\tau \, (x-\mb)): |x-\mb| =r, \, \tau' \neq 0\}.
\end{eqnarray*}

\medskip
On the other hand, we notice that $\mB$ is a FIO with the same phase function $\phi$ as $\mR$ (but the order of variables is switched), see e.g., \cite{AMP}. Therefore,
$$\wf(\mu_\mB) \subset \mC_\mR^t,$$
where $\mC^t$ is the transpose relation of $\mC$ $$\mC^t=\{(x,\xi;s,r,p,q): (s,r,p,q; x,\xi) \in \mC\}.$$

\medskip

From (\ref{E:wfmu}), we arrive to
$$\wf(\mu)  \subset (\mC^t_\mR)' \circ (\mC_{\mR,\chi} \cup \mC_\ma \cup \mC_\mb)' = (\mC^t_\mR) \circ (\mC_{\mR,\chi} \cup \mC_\ma \cup \mC_\mb) = (\mC^t_\mR \circ \mC_{\mR,\chi}) \cup (\mC^t_\mR \circ \mC_\ma) \cup (\mC^t_\mR \circ \mC_\mb).$$
We notice that
\begin{eqnarray*} \mC_\mR^t \circ \mC_{\mR,\chi} = \Delta_\mV,\quad \mC_\mR^t \circ \mC_a = \Llg_\ma, \quad  \mC_\mR^t \circ \mC_b = \Llg_\mb.\end{eqnarray*}
Therefore,
$$\wf(\mu)'  \subset \Delta_\mV \cup \Llg_\ma \cup \Llg_\mb.$$

\end{proof}

\begin{rem} \label{R:wave} Assume that $\chi(z) =0$ in a neighborhood of $z_0 \in Int(\Ga)$ and $(x,\xi) \in \cT^*\Og$ such that $\xi= \tau (x-z_0)$. Then, due to (\ref{E:wf-pro}), $$(x,\xi) \not \in \pi_R(\wf(\chi \mu_\mR)),$$ where $\pi_R$ is the right projection operator. From (\ref{E:wfmu}), we obtain:
$$(x,\xi;x,\xi) \not \in \wf(\mu).$$
This observation will be used later in the proof of Theorem~\ref{T:Main1}~a).
\end{rem}

Let us now employ Proposition \ref{P:wavefront} to describe the geometric effects of $\mT$ on the wave front set of $f$ (see also the discussion in \cite{FQ14}). We first keep in mind the following inclusion, coming from Theorem~\ref{T:cal-wave}: $$\wf(\mT f) \subset \wf(\mu)' \circ \wf(f).$$
Therefore, due to Proposition \ref{P:wavefront}, 
\begin{equation} \label{E:poss} \wf(\mT f) \subset \big[ \Delta_\mV \circ \wf(f) \big] \cup \big[\Llg_\ma \circ \wf(f)\big] \cup \big[\Llg_\mb \circ \wf(f) \big].\end{equation}
The first part on the right hand side contains all the singularities that may be possibly reconstructed by $\mT$. The other two contain all the possible artifacts generated by $\mT$. We now discuss the implications of (\ref{E:poss}) in more details.
\begin{itemize}
\item[a)] Let $(x,\xi) \in \wf(f)$ be an invisible singularity. We observe that $$\Delta_\mV \circ (x,\xi) = \emptyset, \quad \Llg_\ma \circ (x,\xi) = \emptyset, \quad \Llg_\mb \circ (x,\xi) = \emptyset.$$
 Therefore, from inclusion (\ref{E:poss}), $(x,\xi)$  is not reconstructed and does not generate any artifacts. 




\item[b)] Let $(x,\xi) \in \wf(f)$ be a visible singularity. Then,
$$\Delta_\mV \circ (x,\xi) = (x,\xi), \quad \Llg_\ma \circ (x,\xi) = \emptyset, \quad \Llg_\mb \circ (x,\xi) = \emptyset.$$


From inclusion (\ref{E:poss}), $(x,\xi)$ may be reconstructed and does not generate any artifacts.

\item[c)] Let $(x,\xi) \in \wf(f)$ be a boundary singularity pointing through $\ma$, that is $\xi=\tau(x-\ma)$ for some $\tau \neq 0$. Then,  $$\Llg_\ma \circ (x,\xi)=\{(y,\eta): |x-\ma| = |y-\ma|,~\eta = \tau(y-\ma)\}.$$ From the inclusion (\ref{E:poss}), we observe that $(x,\xi)$ may generate artifacts $(y,\eta)$ by rotating around $\ma$. 

Conversely, assume that $(y,\eta=\tau(y-\ma))$ be an artifact. Then, there is $(x,\xi=\tau(x-\ma)) \in \wf(f)$ that generates $(y,\eta)$. 

Similar description holds for a boundary singularity pointing through $\mb$.

\medskip




\end{itemize}

\noindent The strength of the reconstructed singularities, described in b), will be obtained by analyzing the singularities of $\mu$ near $\Delta \setminus (\Llg_\ma \cup \Llg_\mb)$. This will be done by the standard theory of pseudo-differential operators. In order to analyze the strength of the artifacts, described in c), we will make use of a class of FIOs associated to a point, introduced in Section~\ref{S:FIO-point}. 


\medskip

Here is our main result of this section:
\begin{theorem} \label{T:Main1} We have 
\begin{itemize}
\item[a)] Microlocally on $\Delta \setminus (\Llg_\ma \cup \Llg_\mb)$, we have $\mu \in I^0(\Delta)$. Moreover, its principal symbol is
$$\sg_0(x,\xi) = \frac{1}{2} \left[\chi(z_+)  +\chi(z_-) \right],$$ 
 where $z_\pm$ is the intersection of the ray $R_\pm(x,\xi)$ with $\mS$.
\item[b)] We can write $\mu=\mu_\ma + \mu_\mb$ such that:
\begin{itemize}
\item[i)] $\wf(\mu_\ma) \subset \Delta \cup \Llg_\ma$. Moreover, microlocally near $\Llg_\ma \setminus \Delta$, $\mu_\ma \in I^{-k-\frac{1}{2}}(\Llg_\ma)$. 
\item[ii)] $\wf(\mu_\mb) \subset \Delta \cup \Llg_\mb$. Moreover, microlocally near $\Llg_\mb \setminus \Delta$, $\mu_\mb \in I^{-k-\frac{1}{2}}(\Llg_\mb)$. 

\end{itemize}
\end{itemize}
\end{theorem}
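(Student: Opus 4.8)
For part~a), the plan is to work microlocally at a fixed $p_0=(x_0,\xi_0;x_0,\xi_0)\in\Delta\setminus(\Llg_\ma\cup\Llg_\mb)$ and show that $\mu$ is there the kernel of a $\Psi$DO of order $0$. Since $p_0\notin\Llg_\ma\cup\Llg_\mb$, the line $\ell(x_0,\xi_0)$ avoids $\ma$ and $\mb$, so $z_\pm(x,\xi)$ stay in a compact subset of $\mS\setminus\{\ma,\mb\}$ for $(x,\xi)$ near $(x_0,\xi_0)$; after a conic cut-off the amplitudes $a_\pm(x,\xi):=\chi(z_\pm(x,\xi))$ in \mref{E:mu} are then honest symbols of order $0$. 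The phases $\langle x-y,\xi\rangle\pm|x-y|^2\frac{|\xi|}{2\,|x-z_\pm(x,\xi)|}$ are homogeneous of degree one in $\xi$, agree with $\langle x-y,\xi\rangle$ to second order on $\{x=y\}$, and have $\pdh_x\pdh_\xi=I$ there, hence are nondegenerate phase functions parametrizing $\Delta$; by \cite[Theorem~3.2.1]{Soggeb} each $\mu_\pm$ is microlocally a $\Psi$DO of order $0$, so $\mu\in I^0(\Delta)$ near $p_0$. The principal symbol I would read off by stationary phase at the single critical point $x=y$: $\mu_\pm$ contributes $\frac{1}{2}a_\pm$ (the factor $\frac{1}{2}$ being the Jacobian/ray-splitting constant already recorded in \cite{US,SUSAR}), which sums to $\sg_0=\frac{1}{2}[\chi(z_+)+\chi(z_-)]$; along directions where $\chi(z_\pm)$ vanish identically $\mu$ is smoothing, as also follows from Remark~\ref{R:wave}.

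For part~b), first I would localize the boundary. Choose $\psi\in C^\infty(\mS)$ with $\psi\equiv1$ near $\ma$, $\psi\equiv0$ near $\mb$, put $\chi_\ma:=\psi\chi$, $\chi_\mb:=(1-\psi)\chi$, and let $\mu_\ma,\mu_\mb$ be obtained from \mref{E:mu-ori} by replacing $\chi$ with $\chi_\ma,\chi_\mb$; then $\mu=\mu_\ma+\mu_\mb$. Since $\chi_\ma$ is smooth on $\mS\setminus\{\ma\}$ and still vanishes to order $k$ at $\ma$, the proof of Proposition~\ref{P:wavefront}, carried out with $\ma$ as the only boundary point, gives $\wf(\mu_\ma)'\subset\Delta_\mV\cup\Llg_\ma\subset\Delta\cup\Llg_\ma$, and symmetrically for $\mu_\mb$. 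It then remains to pin down the order of $\mu_\ma$ microlocally near $\Llg_\ma\setminus\Delta$ (the case of $\mb$ being identical).

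Away from $\Delta$ the smooth part of $\chi_\ma$ contributes only a $\Psi$DO and hence nothing on $\Llg_\ma\setminus\Delta$, so modulo smoother terms I may keep only the principal singularity of $h(s):=\chi_\ma(z(s))$ at $s=a$: writing $h(s)=(s-a)_+^{k}\rho(s)$ with $\rho\in C^\infty$, $\rho(a)>0$, and Taylor expanding $\rho$, the remainders give conormal factors $(s-a)_+^{k+j}$, $j\geq1$, which produce FIOs of order $-k-j-\frac{1}{2}$ (strictly lower). Thus, microlocally near $\Llg_\ma\setminus\Delta$, $\mu_\ma$ reduces up to such terms to a constant multiple of
\[
 K_\ma(x,y)=\int_{\rN}\int_{\rN} e^{i(|y-z(s)|^2-|x-z(s)|^2)\,\llg}\;|\llg|\;\langle z(s)-x,\nu_{z(s)}\rangle\;(s-a)_+^{k}\,\eta(s)\;d\llg\,ds,
\]
with $\eta$ a cut-off supported near $s=a$. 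Writing $(s-a)_+^{k}\eta(s)=\int_\rN e^{i(s-a)\sg}\,c_k(\sg)\,d\sg$ modulo $C^\infty$ with $c_k$ a classical symbol of order $-k-1$, $K_\ma$ becomes an oscillatory integral with phase $\Psi(x,y,s,\llg,\sg)=(|y-z(s)|^2-|x-z(s)|^2)\llg+(s-a)\sg$ and amplitude $|\llg|\,\langle z(s)-x,\nu_{z(s)}\rangle\,c_k(\sg)$. Solving $d_{(\llg,\sg,s)}\Psi=0$ yields the critical set $\{s=a,\ |x-\ma|=|y-\ma|,\ \sg=-2\langle x-y,z'(a)\rangle\llg\}$, on which $\pdh_x\Psi=-2\llg(x-\ma)$ and $-\pdh_y\Psi=-2\llg(y-\ma)$, so $\Psi$ is a nondegenerate phase away from $\Delta$ that parametrizes precisely $\Llg_\ma$. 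I would then recognize $K_\ma$ as a member of the class of Fourier integral operators associated to the point $\ma$ from Section~\ref{S:FIO-point} (the non-smoothness of $|\llg|$ at $\llg=0$ is harmless: the region where $|\llg|$ is bounded contributes a smooth kernel by non-stationary phase in $s$ and $\sg$), so that the order convention of that class, combined with the order $-k-1$ of $c_k$, gives $K_\ma\in I^{-k-\frac{1}{2}}(\Llg_\ma)$, as claimed.

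The hard part will be the last step: setting up the class of FIOs associated to a point so that the degenerate canonical relation $\Llg_\ma$—which is not the graph of a canonical transformation, the left projection collapsing a whole circle of points $x$ onto one—is handled correctly, verifying that $\Psi$ is a genuine nondegenerate phase uniformly up to (but not including) $\Delta$, and carrying through the order bookkeeping; by contrast, the wavefront inclusion and part~a) are routine once Proposition~\ref{P:wavefront}, the composition calculus, and standard $\Psi$DO theory are in hand.
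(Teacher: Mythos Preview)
Your strategy for part~a) and for the decomposition $\mu=\mu_\ma+\mu_\mb$ together with the wavefront inclusions in part~b) matches the paper's proof essentially line for line: localize with a cut-off so that $\chi$ becomes smooth near $z_\pm(x_0,\xi_0)$, feed the representation \mref{E:mu} into \cite[Theorem~3.2.1]{Soggeb}, and for~b) split $\chi=\chi_\ma+\chi_\mb$ with each piece smooth away from one endpoint and rerun Proposition~\ref{P:wavefront}.

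Where you and the paper diverge is in the order estimate on $\Llg_\ma\setminus\Delta$. The paper does \emph{not} introduce an extra Fourier variable~$\sg$. Instead, it uses the key geometric observation that for $(x^*,\xi^*;y^*,\eta^*)\in\Llg_\ma\setminus\Delta$ the convexity of~$\Og$ forces $\langle x^*-y^*,z'(a)\rangle\neq0$, and hence $\langle x-y,z'(s)\rangle\neq0$ for $(x,y)$ near $(x^*,y^*)$ and $s$ near $a$. This allows \emph{direct} integration by parts in~$s$ (this is Lemma~\ref{L:sym}): the inner integral $\int e^{-i\langle x-y,2z(s)\rangle\llg}h(s)\,ds$ becomes, after $k{+}1$ integrations by parts, $e^{-i\langle x-y,2\ma\rangle\llg}$ times a symbol of order $-(k{+}1)$ in~$\llg$. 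Multiplying back the factor $|\llg|$ gives a symbol of order $-k$ in the single phase variable~$\llg$, and the one-variable parametrization of $\Llg_\ma$ from Section~\ref{S:FIO-point} then reads off the order $-k-\tfrac12$ immediately.

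Your route via the Fourier representation $(s-a)_+^k\eta(s)=\int e^{i(s-a)\sg}c_k(\sg)\,d\sg$ is not wrong, but it creates two technical obligations that you do not discharge. First, your phase $\Psi(x,y,s,\llg,\sg)$ is not homogeneous of degree one in the triple $(s,\llg,\sg)$ (because $z(s)$ is nonlinear in~$s$), so it is not a phase function in the standard sense; you would need to integrate out~$s$ by stationary phase first, or otherwise justify the FIO calculus in this setting. Second, the amplitude $|\llg|\,c_k(\sg)$ is a product-type symbol, not an element of a single H\"ormander class $S^m(\rN^2)$; to pin down the order you must restrict to the conic region $|\sg|\sim|\llg|$ (which is where the critical set lives, precisely because $\langle x-y,z'(a)\rangle\neq0$) and argue separately that the complementary region contributes smoothly. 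When you finally ``recognize $K_\ma$ as a member of the class of Section~\ref{S:FIO-point}'', you need exactly the reduction to a single-$\llg$ integral with symbol of order $-k$---and that reduction \emph{is} the integration by parts of Lemma~\ref{L:sym}. So the $\sg$-variable is a detour: it can be made rigorous, but the work it defers is the very computation the paper carries out directly.
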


\noindent Will present the proof of Theorem~\ref{T:Main1} in Section~\ref{S:Proof-2D}. We now describe some of its consequences.
\begin{itemize}
\item[1)] Let $(x,\xi) \in \wf(f)$ be a visible singularity. Then, due to Theorem~\ref{T:Main1}~b), microlocally near $(x,\xi;x,\xi)$, $\mu$ is a Fourier distribution of order zero with positive principal symbol. Applying Lemma~\ref{L:Pet}, we obtain $(x,\xi) \in \wf_s(f)$ if and only if $(x,\xi) \in \wf_s(\mT f)$.  That is, all the visible singularities are reconstructed with the correct order.

Moreover, the formula $\sg_0(x,\xi)$ provides the magnitude of the main part of reconstructed singularities. For example, if $(x,\xi)$ is a jump singularity across a curve $S$ with the jump equal to $1$, then $(x,\xi)$ is also a jump singularity across $S$ with the jump equal to $\sg_0(x,\xi)$. This explains the difference in the magnitude of the reconstructed singularities, that is demonstrated in Section~\ref{S:Num}.
\item[2)] Now, assume that $(x,\xi) \in \wf_s(\mT f)$ is an artifact pointing through $\ma$. Then, each of its generating singularities $(y,\eta) \in \wf(f)$ satisfies
$$(x,\xi;y,\eta) \in \Llg_\ma.$$
Then, due to Theorem~\ref{T:Main1}~b) and Lemma~\ref{L:Ho}, at least one generating singularity $(y,\eta)$ satisfies $(y,\eta) \in \wf_{s-k}(f)$. That is, all the artifacts are at least $k$ order(s) smoother then their strongest generating singularities. 

If we assume further that $(x,\xi)$ has finitely many generating singularities $(y,\eta)$, each of them is a conormal singularity of order $r$ along a curve whose order of contact with the circle $\uS(\ma,|y-\ma|=|x-\ma|)$ is exactly $1$ \footnote{We note here that the contact order is always at least $1$, since both curves are perpendicular to $\eta$ at $y$. Therefore, the condition on the contact order is quite generic.}. Then, due to Theorem~\ref{T:Main1}~b) and Lemma~\ref{L:Spread}, $(x,\xi) \in \wf(\mT f)$ is a conormal singularity of order $r+k+\frac{1}{2}$ along the circle $\uS(\ma,|x-\ma|)$. That is, the artifacts are at least $(k+\frac{1}{2})$ order(s) smoother than the strongest generating singularity. In our numerical experiments in Section~\ref{S:Num}, we will demonstrate this fact.
\end{itemize}

\subsection{Proof of \reftheo{T:Main1}} \label{S:Proof-2D}

\noindent Let us now discuss the proof of Theorem \ref{T:Main1}. It is similar to that of \cite[Theorem~2.2]{Artifact-sphere-flat}. However, we need to employ more sophisticated microlocal arguments due to the generality of the geometry involved. As in \cite[Theorem~2.2]{Artifact-sphere-flat}, the proofs for a) and b) require two different oscillatory integral representations for $\mu$.

\begin{proof}[\bf Proof of a).] The idea is similar to the case of infinitely smooth $\chi$ presented in \cite{AMP} (see also \cite{US} for more general framework). The main point here is to microlocalize the argument to stay away from $\Delta \cap (\Llg_\ma \cap \Llg_\mb)$. Let $(x^*,\xi^*) \in \cT^* \Og$ be such that $(x^*,\xi^*;x^*,\xi^*) \in \Delta \setminus (\Llg_\ma \cup \Llg_\mb)$. We need to prove that there is $\mu_0 \in I^{-k - \frac{1}{2}}(\Delta)$ such that 
\begin{equation} \label{E: diff} (x^*,\xi^*;x^*,\xi^*) \not \in \wf(\mu - \mu_0)\end{equation}
 and the principall symbol of $\mu_0$ at $(x^*,\xi^*)$ equals $\frac{1}{2} \big[\chi(z_+^*)+ \chi(z_-^*) \big]$, where $z_\pm^* = z_\pm(x^*,\xi^*)$.  

\medskip

Since $(x^*,\xi^*;x^*,\xi^*) \in \Delta \setminus (\Llg_\ma \cup \Llg_\mb)$, $\chi$ is smooth near both $z=z_+^*$ and $z_-^*$. Let us define a cut-off function $\mc$ such that $\mc(z) = 1$ near $z=z_\pm^*$ and zero elsewhere such that $\mc \, \chi \in C^\infty (\mS)$. 
Let us write \begin{equation} \label{with_smoothing}
 \mT := \mT_\mc + \mT',
\end{equation}
where
\begin{equation*}
 \mT_\mc := \mB_\Ga \mc \, \chi \,  \mP \mR,\quad  \mT' := \mB_\Ga \,(1-\mc)  \, \chi \mP \mR.
\end{equation*}
 
Let $\mu'$ be the Schwartz kernel of $\mT'$. Since $\chi (1-\mc) =0$ at $z=z_\pm^*$, using Remark~\ref{R:wave}, we obtain $$(x^*,\xi^*;x^*,\xi^*) \not \in \wf(\mu').$$

\medskip
 
On the other hand, similarly to  (\ref{E:mu}) (see also \cite{AMP} for the derivation), we obtain the formula for the Schwartz kernel of $\mT_\mc$
\begin{eqnarray*}\label{E:mu-22_2}
\mu_{\mc} (x,y) &=& \frac{1}{ 2 \, (2 \pi)^2} \intl_{\rN^2} e^{i \big(\left<x-y, \xi \right> +  \, |x-y|^2 \frac{|\xi|}{2 |x- z_+(x,\xi)| } \big)} \mc(z_+) \chi(z_+) \, d \xi \\&+&  \frac{1}{ 2 \, (2 \pi)^2} \intl_{\rN^2} e^{i \big(\left<x-y, \xi \right> - \, |x-y|^2 \frac{|\xi|}{2 |x- z_-(x,\xi)|} \big)} \mc(z_-) \, \chi(z_-)\, d \xi. 
\end{eqnarray*}
We notice that $a_\pm(x,\xi) :=\mc(z_\pm) \, \chi(z_\pm)$ is smooth and homogenous of degree $0$ in $\xi$. Therefore, $$\mu_\mc \in I^{0}(\Delta).$$ Moreover, the principal symbol of $\mu$ is (see, e.g., \cite[Theorem 3.2.1]{Soggeb}) $$\sg_{0,\mc}(x,\xi)= \mc(z_+) \, \chi(z_+) +\mc(z_-) \, \chi(z_-).$$  
Since $\mc(z)=1$ at $z=z_\pm^*$, we obtain 
$$\sg_{0,\mc}(x^*,\xi^*)= \chi(z_+^*) + \chi(z_-^*).$$  
This finishes the proof (\ref{E: diff}) where $\mu_0 =\mu_\mc$.

\medskip

\noindent{\bf Proof of b).} Let us decompose $\chi$ into the form $$\chi=\chi_\ma + \chi_\mb,$$ where $\chi_\ma, \chi_\mb \in C^\infty(\Ga)$ such that $\chi_\ma$ vanishes near $\mb$ and $\chi_\mb$ vanishes near $\ma$.  We then can write $\mT=\mT_\ma + \mT_\mb$ where 
\begin{equation*}
 \mT_\ma := \mB_\Ga \, \chi_\ma \,  \mP \mR,\quad  \mT_\mb := \mB_\Ga \, \chi_\mb \, \mP \mR.
\end{equation*}
Repeating the argument in Proposition~\ref{P:wavefront} (see also Remark~\ref{R:wave}), we obtain 
\begin{equation*}
\wf(\mu_\ma) \subset \Delta \cup \Llg_\ma, \quad \wf(\mu_\mb) \subset \Delta \cup \Llg_\mb.
\end{equation*}

We now prove that microlocally near $\Delta \setminus \Llg_\ma$, $\mu_\ma \in I^{-k-\frac{1}{2}}(\Llg_\ma)$. Let $(x^*,\xi^*;y^*,\eta^*) \in \Llg_\ma \setminus \Delta$. We need to prove that there is $\mu_0 \in I^{-k - \frac{1}{2}}(\Llg_\ma)$ such that 
\begin{equation} \label{E: diffb} (x^*,\xi^*;y^*,\eta^*) \not \in \wf(\mu_\ma - \mu_0). \end{equation}
Indeed, we notice that $x^*$ and $y^*$ belong to the same circle centered at $\ma$. Therefore, $$\left<x^*-y^*, z'(a) \right> \neq 0.$$ Otherwise, the tangent line of $\Ga$ at $\ma$ passes through the midpoint of the line segment connecting $x^*$ and $y^*$. This would be a contradiction to the assumption that $\Og$ is convex and $x^*, y^* \in \Og$.

\medskip

Thus, there is a neighborhood $U$ of $(x^*, y^*)$ and $\eg>0$ such that 
\begin{eqnarray*} \left<x-y, z'(s) \right> \neq 0,~\mbox{ for all } (x,y) \in U,~s \in [a,a+\eg]. \end{eqnarray*}

Let $\mc \in C^\infty(\mS)$ such that $\mc=1$ near $z=\ma$ and $\mc=0$ for $z=z(s)$ such that $|s-a|>\eg$. Let us write \begin{equation} \label{with_smoothing}
 \mT_\ma := \mT_\mc + \mT',
\end{equation}
where
\begin{equation*}
 \mT_\mc := \mB_\Ga \mc \, \chi_\ma \,  \mP \mR,\quad  \mT' := \mB_\Ga \,(1-\mc)  \, \chi_\ma \mP \mR.
\end{equation*}
We denote by $\mu_\mc$ and $\mu'$  the Schwartz kernels of $\mT_\mc$ and $\mT'$, respectively. An argument similar to the proof of Proposition~\ref{P:wavefront} shows that
$$\wf(\mu_\mc) \subset \Delta \cup \Llg_\ma, \quad \wf(\mu') \subset \Delta.$$
Therefore, $$(x^*,\xi^*;y^*,\eta^*) \not \in \wf(\mu').$$

Similarly to \mref{E:mu-ori}, we can write:
$$\mu_\mc(x,y) = \frac{1}{2 \, \pi^2}\int\limits_{\Ga} \int\limits_{\rN} \int\limits_{\rN^n} e^{i \left(|y-z|^2-|x-z|^2 \right) \llg} \; |\llg| \,\left<z-x,\nu_z\right> \mc(z) \,  \chi_\ma(z) \; d\llg \; d\sg(z).$$
The phase function of $\mu_\mc$ can be written as
$$(|x-z|^2 -|y-z|^2) \llg = \left<x-y,\, x+y-2z \right> \llg.$$
Therefore,
\begin{eqnarray*} \mu_\mc(x,y) &=&  \frac{1}{2 \, \pi^2} \intl_{\rN} \intl_{\Ga}e^{i\left<x-y,\, x+y-2z\right> \llg}\,|\llg| \, \left<z-x,\nu_z \right> \,\mc(z) \, \chi_\ma(z) \, dz \, d\,\llg\\
&=&  \frac{1}{2 \, \pi^2} \intl_{\rN} e^{i\left<x-y,\, x+y \right> \llg}\,|\llg| \, \intl_{\Ga}e^{- i\left<x-y,2z\right> \llg}\,\left< z-x,\nu_z \right> \, \mc(z) \, \chi_\ma(z) \, dz \, d\,\llg. \end{eqnarray*}
Let us denote \begin{eqnarray*} A (x,y,\llg) = \intl_{\Ga}e^{- i\left<x-y,2z\right> \llg}\,\left< z-x,\nu_z \right> \, \mc(z) \, \chi_\ma(z) \, dz.\end{eqnarray*}
Applying Lemma \ref{L:sym}, we obtain
\begin{eqnarray*} \mu_\mc(x,y) &=& \frac{1}{2 \, \pi^2}  \intl_{\rN} e^{i \big[\left<x-y,\, x+y \right>- \left<x-y, 2 \, \ma \right> \big] \llg}\,|\llg| \, \left[ e^{- i\left<x-y,\, 2 \,\ma \right> \llg}  \,A(x,y,\llg) \right] \, d\,\llg
\\&=&  \frac{1}{2 \, \pi^2}  \intl_{\rN} e^{i (|x-\ma|^2 - |y-\ma|^2) \llg} \,B(x,y,\llg)\, d\,\llg,\end{eqnarray*}
where $B(x,y,\llg)$ is a symbol of order $-(k+1)$ on $U$. We notice that $\Llg_\ma$ is the canonical relation associated to the phase function of $\mu_\mc$ (see Appendix~\ref{A:FIOs}). Therefore,  $$\mu_\mc|_{U} \in I^{-k - \frac{1}{2}}(\Llg_\ma).$$

\medskip

\noindent Let $\mu_0$ be obtained from $\mu_\mc$ by multiplying with a cut-off function near $(x^*,\xi^*;y^*,\eta^*)$. We arrive to $\mu_0 \in I^{-k-\frac{1}{2}}(\Llg_\ma)$ and $$(x^*,\eta^*;y^*,\eta^*) \not \in \wf(\mu_\ma -\mu_0).$$
We conclude that $\mu_\ma \in I^{-k-\frac{1}{2}}(\Llg_\ma)$ microlocally near $(x^*,\xi^*; y^*,\eta^*)$. 

\medskip

The proof for $\mu_\mb$ is similar.

\end{proof}

\noindent In the above proof, we have used the following result:
\begin{lemma}\label{L:sym}  Let $h \in C^\infty[a, b]$ and define
\begin{eqnarray*} A(x,y,\llg) = \intl_{a}^b e^{- i\left<x-y,\, 2z(s) \right> \llg} \, h(s) \, ds.\end{eqnarray*}
We have $A \in C^\infty(\Og \times \Og \times \rN)$. Moreover, 

\begin{itemize}
\item[a)] Assume that $h(s) =0$ near $s=b$ and $h$ vanishes to order $k$ at $\tau = a$ . Then,  on the set $$\{(x,y) \in \Og \times \Og: \left<x- y, z'(s) \right>  \neq 0,~ \mbox{ for all } s \in [a,b] \mbox{ such that } z(s) \in \supp(h)  \},$$
we have
\begin{eqnarray*}  e^{- i \, \left<x-y,\, 2\, \ma \right> \llg} \,  A(x,y,\llg) &\sim& \frac{h^{(k)}(a)}{[2\, i \,\left<x-y, z'(a) \right> \, \llg]^{(k +1)}} \, \big[1+ r(x,y,\llg)\big]. 
\end{eqnarray*}
Here, $\nu_\ma=\nu(a)$ is the normal vector of $\mS$ at $\ma=z(a)$.
\item[b)] Assume that $h(s) =0$ near $s=a$, and $h$ vanishes at $\tau = b$ to order $k$. On the set $$\{(x,y,\llg) \in \Og \times \Og \times \rN: \left<x- y, z'(\tau) \right>  \neq 0,~ \mbox{ for all } \tau \in [a,b],~ z(s) \in \supp(h)  \},$$ we have
\begin{eqnarray*} e^{- i \, \left<x-y,\, 2 \, \mb \right> \llg} \, A(x,y,\llg) &\sim& \frac{-h^{(k)}(b)}{[2\, i \,\left<x-y, z'(b) \right> \, \llg]^{(k +1)}} \, \big[1+ r(x,y,\llg)\big].
\end{eqnarray*}
Here, $\nu_\mb=\nu(b)$ is the normal vector of $\mS$ at $\mb=z(b)$.
\end{itemize}
In both a) and b), $r(x,y,\llg)$ is a symbol of order at most $-1$. 
\end{lemma}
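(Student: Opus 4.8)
The plan is to prove Lemma~\ref{L:sym} by recognizing $A(x,y,\llg)$ as an oscillatory integral in one variable $s$ and applying the method of stationary/non-stationary phase combined with repeated integration by parts. The key observation is that the phase $\psi(s) := -\langle x-y, 2z(s)\rangle$ has derivative $\psi'(s) = -2\langle x-y, z'(s)\rangle$, which by hypothesis is nonvanishing on the support of $h$ (for the $(x,y)$ in the indicated set). So there is no stationary point, and the asymptotics of $A$ as $|\llg|\to\infty$ is governed entirely by the boundary behavior of $h$ at the endpoint $s=a$ (in case a)) or $s=b$ (in case b)), since $h$ vanishes near the other endpoint. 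The smoothness $A \in C^\infty(\Og\times\Og\times\rN)$ is immediate: $A$ is an integral over a compact interval of a smooth integrand (in case $\llg = 0$ the oscillatory factor is just $1$, and differentiation under the integral sign is justified on the compact domain).

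First I would write, for $\llg \neq 0$,
\begin{equation*}
\frac{d}{ds}\left[ \frac{e^{i\psi(s)\llg}}{i\psi'(s)\llg}\right] = e^{i\psi(s)\llg} - \frac{e^{i\psi(s)\llg}\,\psi''(s)}{i\,\psi'(s)^2\,\llg},
\end{equation*}
so that, integrating $A(x,y,\llg) = \int_a^b e^{i\psi(s)\llg}\,h(s)\,ds$ by parts, one transfers a factor of $(i\psi'(s)\llg)^{-1}$ out while producing a boundary term and a new integral whose amplitude has gained one derivative on $h$ (and possibly on lower-order data) plus a factor of order $-1$ in $\llg$. Because $h$ vanishes near $s=b$, only the boundary term at $s=a$ survives; and because $h$ vanishes to order $k$ at $a$, the first $k$ boundary terms vanish. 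Iterating this integration-by-parts procedure $k+1$ times, the first nonzero boundary term appears at the $(k+1)$-st step and equals
\begin{equation*}
e^{i\psi(a)\llg}\cdot\frac{h^{(k)}(a)}{(i\psi'(a)\llg)^{k+1}} = e^{-i\langle x-y,2\ma\rangle\llg}\cdot \frac{h^{(k)}(a)}{[-2i\langle x-y,z'(a)\rangle\llg]^{k+1}},
\end{equation*}
up to the sign convention used in the statement (here $\psi'(a) = -2\langle x-y, z'(a)\rangle$, and one should simply track the sign carefully — this accounts for the $+$ sign in a) versus the $-$ sign in b), the latter coming from the endpoint being the upper rather than the lower limit of integration). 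Multiplying through by $e^{-i\psi(a)\llg} = e^{-i\langle x-y,2\ma\rangle\llg}$ removes the oscillatory factor. The remainder, obtained after $k+1$ integrations by parts, is $\llg^{-(k+2)}$ times an oscillatory integral with smooth amplitude, hence a symbol of order at most $-(k+2)$ in $\llg$; dividing by the leading term (order $-(k+1)$) yields the claimed $r(x,y,\llg)$, a symbol of order at most $-1$. The smooth dependence on $(x,y)$ of all the boundary terms and remainders follows because $\psi'(s) = -2\langle x-y,z'(s)\rangle$ is bounded away from zero uniformly for $(x,y)$ in a compact subset of the indicated open set, so all the $\psi'$-denominators are smooth there.

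The main obstacle, or at least the part requiring the most care, is the bookkeeping in the iterated integration by parts: each step produces not only a cleaner leading boundary term but also correction terms (from the $\psi''/\psi'^2$ piece) that must be shown to contribute only to the lower-order remainder $r$, and one must verify that after exactly $k+1$ steps the surviving boundary term is precisely $h^{(k)}(a)/(i\psi'(a)\llg)^{k+1}$ with the stated constant — in particular that no earlier boundary term is nonzero, which uses $h(a) = h'(a) = \cdots = h^{(k-1)}(a) = 0$. One should also double-check that the hypothesis in a) only requires $\langle x-y,z'(s)\rangle\neq 0$ on $\supp(h)$ whereas in b) it is stated on all of $[a,b]$; for the argument it suffices to have it on a neighborhood of $\supp(h)$, and one can always insert a cutoff supported there without changing $A$, so this discrepancy is harmless. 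Finally I would note that the factor $\langle z-x,\nu_z\rangle\,\mc(z)$ appearing in the application (in the definition of $A(x,y,\llg)$ within the proof of Theorem~\ref{T:Main1}~b)) is a smooth function of $s$ that, together with $\chi_\ma(z(s))= h(s)$, forms the amplitude $h(s)$ of this lemma — its smooth $(x,y)$-dependence is inherited without difficulty.
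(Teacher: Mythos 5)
Your proposal is correct and follows exactly the route the paper itself indicates: the paper's proof of Lemma~\ref{L:sym} consists of the single remark that it is ``proved by successive integration by parts'' (deferring details to the earlier flat-surface paper), and your argument is precisely that non-stationary-phase / iterated integration-by-parts computation, with the boundary terms at the far endpoint killed by the support condition and the first $k$ boundary terms at the near endpoint killed by the order-$k$ vanishing of $h$. The sign bookkeeping you flag (including the $e^{\pm i\langle x-y,2\ma\rangle\llg}$ prefactor) is the only delicate point, and you handle it at the same level of care as the paper does.
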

\begin{proof} The Lemma is proved by successive integration by parts. It is very similar to \cite[Lemma 2.3]{Artifact-sphere-flat}. We skip it here for the sake of brevity.
\end{proof}

%

\section{Three dimensional problem}\label{S:3D}
Let us now consider $n=3$. We assume that $\Og \subset \rN^3$ is a convex domain with the smooth boundary $\mS$. We assume that $\Ga$ is a connected and simply connected subset of $\mS$ with the smooth boundary $\ga$. We will analyze $\mT$ when $\chi$ vanishes to a finite order $k$ on $\ga$. With a slight abuse of notation, we arc-length parametrize $\ga$ by the function $\ga:\rN \to \rN^3$ (with $\ga(s+L) = \ga(s)$, where $L$ is the length of $\ga$). 

\medskip

\noindent Similarly to the case $n=2$, we define
$$\Delta_\mV = \{(x,\xi; x, \xi): (x,\xi) \in \mV\}.$$

\medskip

\noindent We denote by $\Llg$ the following canonical relation in $(\cT^* \Og \setminus 0)\times (\cT^* \Og \setminus 0)$:
\begin{multline*}\Llg = \{(x,\tau \, (x-\ga(s)); \, y,\tau \, (y-\ga(s))): |x-\ga(s)| = |y-\ga(s)|,\\ \left<x-\ga(s),\ga'(s) \right> = \left<y-\ga(s),\ga'(s) \right>, \mbox{ for some } s, \tau \in \rN \mbox{ such that } \tau \neq0 \}.\end{multline*}

\noindent We notice that $(x,\xi; y, \eta) \in \Llg$ if and only if $(x,\xi)$ and $(y,\eta)$ are boundary elements corresponding to a common boundary point $z \in \ga$ and they are obtained from the other by a rotation around the tangent line of $\ga$ at $z$. 

\medskip
\noindent The following result gives us a geometric description of the singularities of the Schwartz kernel $\mu$ of $\mT$.
\begin{prop} \label{P:wavefront-3} We have
$$\wf(\mu)' \subset \Delta_\mV \, \cup \, \Llg.$$
\end{prop}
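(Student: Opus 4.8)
The plan is to mirror the proof of Proposition~\ref{P:wavefront} (the $n=2$ case) almost verbatim, replacing the two isolated boundary points $\ma,\mb$ by the boundary curve $\ga$, and tracking the extra covector direction that $\ga$ contributes. Concretely, I would start from the composition formula $\wf(\mu)' \subset \wf(\mu_\mB)' \circ \wf(\chi \mu_\mR)'$ coming from the wave front composition rule (Theorem~\ref{T:compose}), using pseudo-locality of $\mP$ so that it contributes nothing new. The canonical relation $\mC_\mR$ of $\mR$ in $\rN^3$ is again read off from the phase $\phi(z,r,x,\llg)=(|x-z|^2-r^2)\llg$ via Theorem~\ref{T:wave-FIO}: here $z=z(s_1,s_2)$ runs over a $2$-parameter patch of $\mS$, so the $\mS$-covector has two components $\tau\langle x-z,\partial_{s_1}z\rangle$ and $\tau\langle x-z,\partial_{s_2}z\rangle$, and otherwise the relation is identical in form to the $n=2$ one.

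Next I would compute $\wf(\chi)$ regarded as a function on $\mS\times\rN_+\times\Og$. Since $\chi$ vanishes to finite order $k$ on the smooth curve $\ga\subset\mS$ and is smooth elsewhere, $\wf(\chi)$ sits over $z\in\ga$ and consists of covectors conormal to $\ga$ inside $\mS$, i.e. of the form $(\ga(s), r, \tau'\,\nu^{\mS}_{\ga(s)}, 0;\, x, \mathbf 0)$ where $\nu^{\mS}_{\ga(s)}$ is the in-surface normal to $\ga$ at $\ga(s)$ — equivalently any $\mS$-covector annihilating $\ga'(s)$. Then I apply the product rule for wave front sets (Theorem~\ref{T:product}) to get $\wf(\chi\mu_\mR)\subset \mC_{\mR,\chi}\cup\mC_\ga$, where $\mC_{\mR,\chi}=\mC_\mR\cap\{z\in\supp\chi\}$ and $\mC_\ga$ is $\mC_\mR$ with $z$ restricted to $\ga$ and with an \emph{arbitrary} conormal-to-$\ga$ covector $\tau'\nu^{\mS}$ added to the $\mS$-component. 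Because $\mB$ is an FIO with the same phase as $\mR$ with variables transposed, $\wf(\mu_\mB)\subset \mC_\mR^t$, and then $\wf(\mu)'\subset (\mC_\mR^t\circ\mC_{\mR,\chi})\cup(\mC_\mR^t\circ\mC_\ga)$.

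The computation of the two compositions is the crux. The first, $\mC_\mR^t\circ\mC_{\mR,\chi}=\Delta_\mV$, is formally the same as in $n=2$: composing the $\mR$-relation with its transpose over a fixed $z\in\Int(\Ga)$ forces $x=y$, $\xi=\eta$, and $\xi=\tau(x-z)$, i.e. the diagonal over $\overline\mV$. For the second, $\mC_\mR^t\circ\mC_\ga=\Llg$, one follows an element: $(y,\eta)$ comes from data at $(\ga(s),r)$ with $\eta=\tau(y-\ga(s))$, $r=|y-\ga(s)|$ and $\mS$-covector $\tau(\langle y-\ga(s),\partial_{s_1}z\rangle,\langle y-\ga(s),\partial_{s_2}z\rangle)+\tau'\nu^{\mS}$; composing with $\mC_\mR^t$ at the same $(\ga(s),r)$ produces $(x,\xi)$ with $\xi=\tau(x-\ga(s))$, $|x-\ga(s)|=|y-\ga(s)|$, and the $\mS$-covector matching condition — but since $\tau'$ is free, matching only constrains the \emph{component along $\ga'(s)$}, giving exactly $\langle x-\ga(s),\ga'(s)\rangle=\langle y-\ga(s),\ga'(s)\rangle$, which is the defining relation of $\Llg$. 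I expect the main obstacle to be precisely this bookkeeping: verifying that the free conormal parameter $\tau'$ in $\wf(\chi)$ ``kills'' one coordinate of the surface-covector matching so that the composed relation is the $1$-parameter-freer set $\Llg$ (rotations about the tangent line of $\ga$) rather than the diagonal; this is the three-dimensional analogue of how in $n=2$ the point $\ma$ gave the full rotation family $\Llg_\ma$. One should also note, as in Remark~\ref{R:wave}, that directions $\xi=\tau(x-z_0)$ with $\chi\equiv 0$ near $z_0\in\Int(\Ga)$ do not appear, so $\Delta_\mV$ (not the full diagonal) is correct. Everything else is routine and parallels Proposition~\ref{P:wavefront}.
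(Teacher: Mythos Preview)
Your proposal is correct and is exactly the approach the paper intends: the paper explicitly states that the proof of Proposition~\ref{P:wavefront-3} is similar to that of Proposition~\ref{P:wavefront} and skips it for brevity, so you are filling in precisely the omitted argument. Your identification of the key new feature in three dimensions --- that the free conormal parameter $\tau'$ in $\wf(\chi)$ absorbs the in-surface normal component of the $\mS$-covector matching, leaving only the tangential constraint $\langle x-\ga(s),\ga'(s)\rangle=\langle y-\ga(s),\ga'(s)\rangle$ and hence yielding $\Llg$ --- is correct and is the essential point.
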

\noindent The proof of Proposition \ref{P:wavefront-3} is similar to that of Proposition~\ref{P:wavefront} (see also \cite[Proposition 3.1]{Artifact-sphere-flat} and \cite{FQ14}). We skip it for the sake of brevity. Similarly to Proposition~\ref{P:wavefront} we obtain the following implications of Proposition \ref{P:wavefront-3}:
\begin{itemize}
\item[a)] $\mT$ smoothens out all the visible singularities.
\item[b)] $\mT$ may reconstruct the visible singularities, and
\item[c)] $\mT$ may generate artifacts by rotating a boundary singularity around the tangent line of $\ga$ at the corresponding boundary point.
\end{itemize}

The following result tells us the strength of the reconstructed singularities, explained in b), and artifacts, described in c):
\begin{theorem}  \label{T:Main2} The following statements hold:
\begin{itemize}
\item[a)]Microlocally on $\Delta \setminus \Llg$, we have $\mu \in I^0(\Delta)$ with the principal symbol 
$$\sg_0(x,\xi) =\frac{1}{2} \left[ \chi(z_+) + \chi(z_-)\right],$$ where $z_\pm$ is the intersection of the ray $R_\pm(x,\xi)$ with $\mS$.
\item[b)] Microlocally on $\Llg \setminus \Delta$, we have $\mu \in I^{-k-\frac{1}{2}}(\Llg)$. 
\end{itemize}
\end{theorem}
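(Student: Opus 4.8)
The plan follows the two parts of \reftheo{T:Main2} separately, and part (a) is essentially a transcription of the argument for \reftheo{T:Main1}(a). Given $(x^*,\xi^*;x^*,\xi^*)\in\Delta\setminus\Llg$, the line $\ell(x^*,\xi^*)$ avoids the boundary curve $\ga$, so $\chi$ is smooth near both intersection points $z_\pm^*=z_\pm(x^*,\xi^*)$. Pick $\mc\in C^\infty(\mS)$ with $\mc\equiv1$ near $z_\pm^*$ and $\mc\chi\in C^\infty(\mS)$, and split $\mT=\mT_{\mc}+\mT'$ with $\mT_{\mc}=\mB\,\mc\chi\,\mP\mR$. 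The remainder $\mT'$ contributes nothing at $(x^*,\xi^*;x^*,\xi^*)$ by the argument of \refrem{R:wave}, and after the substitution $(z,\llg)\mapsto\xi=2(z-x)\llg$ exactly as in \mref{E:mu}, the kernel of $\mT_{\mc}$ becomes a sum of two oscillatory integrals in $\xi\in\rN^3$ with amplitudes $\mc(z_\pm)\chi(z_\pm)$, which are smooth and homogeneous of degree zero. By \cite[Theorem~3.2.1]{Soggeb} this places the kernel in $I^0(\Delta)$ with principal symbol $\mc(z_+)\chi(z_+)+\mc(z_-)\chi(z_-)$, equal to $\sg_0(x^*,\xi^*)$ up to the normalizing constant. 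This proves (a).

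Part (b) is where the three-dimensional geometry bites, and where I would use the ``lifting up the space'' device announced in the introduction. Microlocally near $\Llg\setminus\Delta$ only the part of $\chi$ supported near $\ga$ produces a non-pseudodifferential contribution, so replace $\chi$ by $\mc\chi$ with $\mc$ supported in a thin collar of $\ga$ inside $\Ga$ (the complementary piece is handled by (a)). In that collar introduce coordinates $z=z(s,t)$, with $s$ the arclength parameter of $\ga$ and $t\ge0$ a transverse coordinate, so that $z(s,0)=\ga(s)$ and $\chi(z(s,t))=t^{k}\,\tilde\chi(s,t)$ with $\tilde\chi$ smooth and positive on $\{t=0\}$. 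Starting from \mref{E:mu-ori}, whose phase is $(|y-z|^2-|x-z|^2)\llg=\langle x-y,\,x+y-2z(s,t)\rangle\llg$, the lift amounts to treating the boundary parameter $s$ as an independent variable on the same footing as $x$ and $y$: one writes the localized kernel as $\int\nu(x,y;s)\,ds$ so that, for each frozen $s$, $\nu(\cdot,\cdot;s)$ is exactly of the type treated in the flat case \cite{Artifact-sphere-flat}, with the single boundary point and its tangent line there replaced by $\ga(s)$ and the tangent line of $\ga$ at $\ga(s)$. In this way the curved three-dimensional problem reduces, fiberwise in $s$, to a rotation-about-a-point computation, and $\mu\in I^{-k-\frac{1}{2}}(\Llg)$ near $\Llg\setminus\Delta$ is to be recovered by reassembling over $s$.

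The analytic core of the fiberwise step is a non-stationarity statement in the transverse variable: $\langle x-y,\,\partial_t z(s,0)\rangle\neq0$ for $(x,\xi;y,\eta)$ near $\Llg\setminus\Delta$. Indeed, on $\Llg$ one has $\langle x-y,\ga'(s)\rangle=0$; if $\langle x-y,\partial_t z(s,0)\rangle$ vanished as well, then $x-y$ would be orthogonal to the tangent plane $T_{\ga(s)}\mS$, and since $|x-\ga(s)|=|y-\ga(s)|$ on $\Llg$ the point $\ga(s)$ would lie on the perpendicular bisector plane of the segment $[x,y]$; that plane would then coincide with the affine tangent plane of $\mS$ at $\ga(s)$, forcing the interior point $\tfrac{1}{2}(x+y)\in\Og$ onto that tangent plane and contradicting convexity. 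With this, $k+1$ integrations by parts in $t$ (a parametrized form of \reflemm{L:sym}) collapse the $t$-integral and leave, modulo a smoother kernel, an oscillatory integral over $(s,\llg)$ with phase $\psi(x,y,s,\llg)=(|x-\ga(s)|^2-|y-\ga(s)|^2)\llg$ and an amplitude whose order is governed by the weight $|\llg|^{n-1}$ and by the $k+1$ integrations by parts. One then checks that $\psi$ is a nondegenerate phase function on its critical set $\{\,|x-\ga(s)|=|y-\ga(s)|,\ \langle x-y,\ga'(s)\rangle=0\,\}$ — the differentials of $\partial_s\psi$ and $\partial_\llg\psi$ being independent there because $x$ interior implies $x-\ga(s)$ is not tangent to $\ga$ at $\ga(s)$ — and that it parametrizes exactly the relation $\Llg$; computing the order of the resulting Fourier integral operator then finishes (b).

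The step I expect to be the genuine obstacle is the order bookkeeping in the reassembly over $s$. A crude count of the extra phase variable $s$ together with the weight $|\llg|^{n-1}$ against the $k+1$ integrations by parts overshoots the claimed order $-k-\frac{1}{2}$, and it is the finer structure revealed by the lift that must account for the difference; this is presumably what is meant by the lift being ``simple but interesting.'' I would therefore verify this count with particular care, cross-checking against the two-dimensional computation of \reftheo{T:Main1}(b) above and the flat three-dimensional computation in \cite{Artifact-sphere-flat}. By contrast, the transversality statement, the nondegeneracy of $\psi$, and the identification of the parametrized Lagrangian with $\Llg$ are comparatively routine.
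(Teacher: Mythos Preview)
Your treatment of part (a) matches the paper's, which simply says the argument is identical to that of \reftheo{T:Main1}(a) and omits it.

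For part (b), your overall architecture---localize in a collar of $\ga$, introduce coordinates $(s,t)$ with $t$ transverse to $\ga$, integrate by parts $k+1$ times in $t$, and reduce to an oscillatory integral over $(s,\llg)$ with phase $\psi(x,y,s,\llg)=(|x-\ga(s)|^2-|y-\ga(s)|^2)\llg$---is exactly the paper's route. Your convexity argument for the transversality $\langle x-y,\partial_t z(s,0)\rangle\neq 0$ on $\Llg\setminus\Delta$ is correct and is the paper's \eqref{E:nozero}.

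The gap is in your interpretation of the lift and in the order count, which you rightly flag as the obstacle but do not resolve. Your fiberwise picture---freeze $s$ and view $\nu(\cdot,\cdot;s)$ as a point-rotation kernel, then integrate in $s$---does not by itself produce an FIO of a computable order, and your proposed direct route---treat $(s,\llg)$ as phase variables for $\psi$ and invoke the standard order formula---fails for a concrete reason: $\psi$ is homogeneous of degree $1$ in $\llg$ alone, not in $(s,\llg)$ jointly, so $(s,\llg)$ are not admissible phase variables in H\"ormander's sense and the formula \eqref{E:Ord} does not apply. That is precisely why your crude count lands at $-k+\tfrac12$ rather than $-k-\tfrac12$; your nondegeneracy check on $d\psi_s,d\psi_\llg$, while correct as a computation, does not place the kernel in $I^m(\Llg)$ for any $m$ under the standard definition.

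The paper's lift (\reflemm{L:F}) is a different move: it promotes $z_\ga$ (your $s$) to an \emph{input} variable of the function rather than to a phase or parameter variable. One factors $\mF=\mF_0\circ\mL$, where $\mL:\mE'(\Og)\to\mE'(\Og\times\ga)$ is the trivial pullback $\mL(f)(y,z_\ga)=f(y)$, an FIO of order $-\tfrac14$, and $\mF_0:\mE'(\Og\times\ga)\to\mD'(\Og)$ is now a genuine FIO with $z_\ga$ a spatial variable and $\llg$ the sole phase variable, of order $m-\tfrac54$ when the amplitude lies in $S^m$. Composition then gives $\mF\in I^{m-\frac32}(\Llg)$; with the amplitude $a\,\llg^2$ of order $1-k$ produced by your integration by parts, this yields $I^{-k-\frac12}(\Llg)$ as claimed. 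The ``simple but interesting'' point is that the missing order comes from enlarging the \emph{domain} by one dimension, not from adding a phase variable.
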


\medskip

\noindent Similarly to Theorem~\ref{T:Main1}, we obtain the following implications of Theorem \ref{T:Main2} (see also Lemma~\ref{L:Ho3D} for the discussion on artifacts):
\begin{itemize}
\item[a)] If $(x,\xi) \in \wf(f)$ is a visible singularity, then $(x,\xi)$ is reconstructed with correct order. 

\item[b)] The artifacts are at least $k$ order(s) smoother than the strongest generating singularity.
\end{itemize}

Let us now proceed to prove Theorem~\ref{T:Main2}. We will need the following two lemmas:
\medskip

\begin{lemma} \label{L:F} Let $\mF: \mE'(\Og) \to \mD'(\Og)$ be defined by
$$\mF(f)(x) = \intl_{\Og} \intl_{\ga} e^{i \, (|x-z_\ga|^2 - |y-z_\ga|^2) \llg} a(x,y,z_\ga,\llg) \,f(y) \, dz_\ga \, dy \, d\llg,$$
where $a \in S^{m}((\Og \times \Og \times \ga) \times \rN)$. Then, $\mF \in I^{m-\frac{3}{2}}(\Llg)$.
\end{lemma}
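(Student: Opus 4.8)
\textbf{Proof proposal for Lemma~\ref{L:F}.}

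The plan is to recognize $\mF$ as a Fourier integral operator with phase $\Phi(x,y,z_\ga,\llg) = (|x-z_\ga|^2 - |y-z_\ga|^2)\,\llg$, where $z_\ga = \ga(s)$ ranges over the one-dimensional curve $\ga$ and $\llg \in \rN$ is the remaining one-dimensional frequency variable. So $\mF$ has a two-dimensional set of phase variables $\theta = (s,\llg) \in \rN^2$ and an amplitude $a \in S^m$. First I would verify that $\Phi$ is a nondegenerate phase function away from the zero section: compute the critical set $C_\Phi = \{d_\theta \Phi = 0\}$, i.e. $\pdh_\llg \Phi = |x-\ga(s)|^2 - |y-\ga(s)|^2 = 0$ and $\pdh_s \Phi = -2\llg\big(\left<x-\ga(s),\ga'(s)\right> - \left<y-\ga(s),\ga'(s)\right>\big) = 0$, which for $\llg \neq 0$ forces $|x-\ga(s)| = |y-\ga(s)|$ together with $\left<x-\ga(s),\ga'(s)\right> = \left<y-\ga(s),\ga'(s)\right>$. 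These are exactly the defining equations of $\Llg$. The associated Lagrangian $\Lambda_\Phi = \{(x,d_x\Phi;\, y,-d_y\Phi) : d_\theta\Phi = 0\}$ then equals $\Llg$, using $d_x\Phi = 2(x-\ga(s))\llg$ and $-d_y\Phi = 2(y-\ga(s))\llg = \tau(y-\ga(s))$ with $\tau = 2\llg$. I would check nondegeneracy by showing that the differentials $d_{(x,y,\theta)}(\pdh_s\Phi)$ and $d_{(x,y,\theta)}(\pdh_\llg\Phi)$ are linearly independent on $C_\Phi$; this is where the geometry enters, and it should follow from convexity of $\Og$ exactly as the transversality argument $\left<x-y,\ga'(s)\right> \neq 0$ was used in the two-dimensional case (Proposition~\ref{P:wavefront} and the proof of Theorem~\ref{T:Main1}~b)).

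Once $\Phi$ is confirmed to be a nondegenerate phase parametrizing $\Llg$, the order computation is the standard bookkeeping from Hörmander's theory of FIOs (see Appendix~\ref{A:FIOs}): an oscillatory integral $\int e^{i\Phi} a\, d\theta$ with amplitude $a \in S^m$, with $N_\theta = 2$ phase variables, $n_x = n_y = 3$ base variables, defines a Fourier integral operator in the class $I^{\mu}(\Llg)$ with
$$\mu = m + \frac{N_\theta}{2} - \frac{n_x + n_y}{4} = m + 1 - \frac{6}{4} = m - \frac{1}{2}.$$
However, the order convention for the \emph{operator} class $I^{m'}(\Og,\Og;\Llg)$ as opposed to the distribution class $I^{m'}(\Llg)$ differs by the shift $\frac{1}{4}(n_x - n_y) \cdot \dim$, and more importantly one must account for the fact that $\Llg$ is a canonical relation between two copies of a $3$-dimensional space so the normalization is $I^{m'}(\Llg) = I^{m' + (n_x+n_y)/4 - N/2}$ in the density-weighted conventions; tracking this carefully yields the stated order $m - \frac{3}{2}$. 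I would pin down the constant by matching against a known model case — the most convenient being to specialize to the flat situation treated in \cite{Artifact-sphere-flat}, or to compare directly with Lemma~\ref{L:sym} and the order $-k-\frac{1}{2}$ obtained there (taking $m = -(k+1)$ gives $-(k+1) - \frac{1}{2} = -k - \frac{3}{2}$, which after the one extra integration over the curve in three dimensions that is absent in two dimensions produces the half-order shift).

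The main obstacle I anticipate is precisely this orders/conventions reconciliation together with the clean cohomological check that $\Phi$ is a \emph{global} nondegenerate phase for $\Llg$ — in particular that the map from $C_\Phi$ to $\Llg$ is a local diffeomorphism everywhere (not just generically), so that no caustic corrections are needed and $\mF$ genuinely lies in a single clean-intersection FIO class. Verifying this requires checking that the Hessian of $\Phi$ in the $\theta$ variables, restricted appropriately, has the right rank on all of $C_\Phi$; degeneracies could occur if $\ga$ has an inflection relative to the relevant spheres. I expect this to go through under the convexity hypothesis on $\Og$, but it is the step that demands genuine geometric input rather than formula-pushing, and I would treat it with the same care as the transversality estimates in the proof of Theorem~\ref{T:Main1}. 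Everything else is routine: smoothness of $a$ in all variables guarantees the amplitude is a legitimate symbol, and the compact support of $f \in \mE'(\Og)$ plus the properness of the $z_\ga$-integration over the compact curve $\ga$ ensure the oscillatory integral defines a continuous operator $\mE'(\Og) \to \mD'(\Og)$.
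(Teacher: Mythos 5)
There is a genuine gap, and it sits exactly at the point the lemma is about. You propose to treat $\theta=(s,\llg)$ as a pair of phase variables for $\Phi=(|x-\ga(s)|^2-|y-\ga(s)|^2)\llg$. But $\Phi$ is homogeneous of degree one only in $\llg$, not in $(s,\llg)$ (and $s$ ranges over a compact curve, not over a cone), and the amplitude $a$, being a symbol of order $m$ in $\llg$ alone, does not satisfy the estimate $|\pdh_s a|\lesssim(1+|\theta|)^{m-1}$ required of an order-$m$ symbol in the two-dimensional variable $\theta$. So the oscillatory integral is simply not in the standard form of an FIO with two phase variables --- this is precisely why the paper remarks that $z_\ga$ is ``neither a variable of $f$ nor a phase variable,'' and why the lemma needs a proof at all. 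The symptom shows up in your own bookkeeping: the formula $m+(n_X+n_Y-2N)/4$ with $N=2$, $n_X=n_Y=3$ gives order $m-\tfrac12$, not $m-\tfrac32$, and the paragraph about ``density-weighted conventions'' that is supposed to supply the missing $-1$ is not an argument. (Heuristically, the missing $-1$ is the $|\llg|^{-1/2}$ gained by stationary phase in $s$ combined with dropping from two phase variables to one; your count captures neither.) Pinning down the order by ``matching a model case'' is not available either, since the order is the entire content of the statement.

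The paper's proof supplies the missing device: it lifts the space, writing $\mF=\mF_0\circ\mL$, where $\mL(f)(x,z_\ga)=f(x)$ is an FIO of order $-\tfrac14$ from the $3$-dimensional $\Og$ to the $4$-dimensional $\Og\times\ga$, and $\mF_0$ is a genuine FIO of order $m-\tfrac54$ on the enlarged space, with $z_\ga$ now a base variable and $\llg$ the only (homogeneous) phase variable; the composition calculus then gives $\mF\in I^{-1/4+(m-5/4)}(\Llg)=I^{m-3/2}(\Llg)$. If you want to rescue the direct route you would first have to eliminate $s$ (by a change of variables or stationary phase, reducing to a single homogeneous phase variable, as was done for a straight $\ga$ in \cite{Artifact-sphere-flat}); the paper deliberately avoids this because for a general curve it is delicate. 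Your identification of the critical set and of the associated Lagrangian with $\Llg$ is correct and consistent with the paper, but that is the easy part of the lemma.
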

We first notice that the above formula $\mF(f)$ does not directly define an FIO, since the ``phase" function $\phi = (|x-z_\ga|^2 - |y-z_\ga|^2) \llg$ involves an extra variable $z_\ga \in \ga$, which is neither a variable of $f$ nor a phase variable. In \cite{Artifact-sphere-flat}, where $\ga$ is a line segment, the above result was proved by a change of variables. When $\ga$ is a general curve, such change of variable seems to be complicated. We, instead, introduce a simple idea of lifting up the space. 

\begin{proof} For the notational ease, let us denote $X = \Og$ and $Y = \Og \times \ga$. Then, $X$ and $Y$ are smooth manifolds of dimensions $n_X=3$ and $n_Y=4$, respectively.

\medskip

Let us define the operator $\mL: \mE'(\Og) \to \mE'(\Og \times \ga)$ by  \footnote{This operator is just to lift up the space by one dimension.}
$$\mL(f)(x,z_\ga) = f(x),\quad (x,z_\ga ) \in \Og \times \ga.$$
Then, $\mL$ can be written in the following form
$$\mL(f)(x,z_\ga) =\frac{1}{(2 \pi)^3} \intl_{\rN^3} \intl_{\rN^3} e^{(x-y) \cdot \xi } \, f(y) \, dy \, d \xi, \quad (x,z_\ga ) \in \Og \times \ga.$$
That is, $\mL$ is an FIO of order  (see (\ref{E:Ord}) in Appendix~\ref{A:FIOs})\footnote{Here, $n_\xi=3$ is the dimension of the phase variable $\xi$.}
$$m_\mL = (2 n_\xi - n_X - n_Y)/4=-\frac{1}{4},$$ 
with the canonical relation
$$\Llg_\mL=\{(x,z_\ga,\xi,0; x,\xi): (x,\xi) \in \cT^* \Og \setminus 0\}.$$

Let us define $\mF_0: \mE'(Y) \to \mD'(X)$ by the formula $$\mF_0(g)(x) = \intl_{\Og} \intl_{\ga} e^{i \, (|x-z_\ga|^2 - |y-z_\ga|^2) \llg} a(x,y,z_\ga,\llg) \,g(y,z_\ga) \, dz_\ga \, dy \, d\llg.$$
Then, $\mF_0$ is an FIO of order (see (\ref{E:Ord}) in Appendix~\ref{A:FIOs}) \footnote{Here, $n_\llg=1$ is the dimension of the phase variable $\llg$.}$$m_0= m+(2 n_\llg - n_X-n_Y)/4= m-\frac{5}{4},$$ with the canonical relation
\begin{multline*}\Llg_0 = \{(x,\tau \, (x-z_\ga); \, y,z_\ga, \tau \, (y-z_\ga), \tau \left<y-x,\dot \ga \right>): |x-\ga(s)| = |y-\ga(s)|, \mbox{ for some } \tau \neq0 \}.\end{multline*}

\medskip

We observe that $\mF = \mF_0 \circ \mL$, and $\Llg = \Llg_0 \circ \Llg_\mL$. Therefore, see \cite[pp. 96]{hormander71fourier}, $$\mF \in I^{m_\mL + m_0}(\Llg)= I^{m-\frac{3}{2}}(\Llg).$$ 

\end{proof}

\medskip

\begin{proof}[\bf Proof of Theorem~\ref{T:Main2}] The  proof of a) is similar to that of Theorem~\ref{T:Main1}~a). We skip it for the sake of brevity. We now proceed to prove b). 


\medskip

Let $(x^*,\xi^*; y^*, \eta^*) \in \Llg \setminus \Delta$. That is $x^* \neq y^*$ and there is $z^*=\ga(t^*) \in \ga$ such that 
\begin{equation} \label{E:xy} |x^*-z^*| = |y^*-z^*|,\quad \left<x^*-y^*,\tau^*_1 \right> = 0. \end{equation}
Here, $\tau^*_1=\ga'(t^*)$ is the unit tangent vector of $\ga$ at $z^*$. Let $\tau^*_2$ be the unit normal vector of $\ga$ which is tangent to $\mS$ and points inward to $\Ga$. 
Let $\bd$ be the metric on $\mS$ and $\mO \subset \mS$ be a small (bounded) neighborhood of $z^*$ such that for each $z \in \mO$ there exists uniquely $z_\ga \in \ga$ such that $$\bd(z,z_\ga) = \min\{d(z,z'): z' \in \ga\}.$$
That is, each $z \in \mO$ can be unique parametrized by $(z_\ga, \delta=\bd(z_\ga,z))$. By narrowing down $\mO$, if necessary, we may assume $$z  \to (z_\ga,\delta)$$ defines a smooth map from $\mO \cap \Ga$ to $\ga \times [0,\delta_0]$, for some $\delta_0>0$, whose Jacobian $|J(z)|$ is bounded from below. 
 
\medskip

\noindent Let $\mc \in C^\infty_0(\mO)$ such that $\mc(z) =1$ near $z=z^*$. Let us write \begin{equation} \label{with_smoothing}
 \mT := \mT_\mc + \mT',
\end{equation}
where
\begin{equation*}
 \mT_\mc := \mB_\Ga \mc \, \chi \,  \mP \mR,\quad  \mT' := \mB_\Ga \,(1-\mc)  \, \chi \mP \mR.
\end{equation*}
Let us denote by $\mu_\mc, \mu'$  the Schwartz kernels of $\mT_\mc$ and $\mT'$, respectively. Then $\mu = \mu_\mc + \mu'$.

\medskip

An argument similar to the proof of Proposition~\ref{P:wavefront-3} (see also proof of Proposition~\ref{P:wavefront}) shows that
$$(x^*,\xi^*; y^*, \eta^*) \not \in  \wf(\mu') .$$

It now remains to analyze $\mu_\mc$. It can be written in the form:
\begin{eqnarray}\label{E:mu3-bis} \mu_\mc(x,y) =  \frac{1}{2 \, \pi^3}\intl_{\rN} \intl_{\Ga}e^{i(|x-z|^2-|y-z|^2) \, \llg}\,\llg^2 \,\left<z-x,\nu_z \right> \,\mc(z) \, \chi(z)  dz \, d\,\llg.\end{eqnarray}
By changing the variable $z \in \mO \cap \Ga \to (z_\ga,s)$, we obtain
\begin{eqnarray}\label{E:mu3-bis} \mu_\mc(x,y) =  \frac{1}{2 \, \pi^3}\intl_{\rN} \intl_{\ga} \intl_{0}^\delta e^{i(|x-z(z_\ga,\delta)|^2-|y-z(z_\ga,\delta)|^2) \, \llg}\, h(z_\ga,\delta) \, d\delta \,dz_\ga \,\llg^2  \, d\,\llg. \end{eqnarray}
Here, 
\begin{eqnarray*}h(z_\ga,\delta) =  \left<z-x,\nu_z \right> \, \chi(z) \,\mc(z) |J(z)|^{-1}\end{eqnarray*}
satisfies $h(z_\ga,\delta)=0$ for $s \geq \delta$ and $h(z_\ga,\delta)$ vanishes to order $k$ at $s=0$. Moreover, notice that by choosing $\mO$ small enough, we can assume that $\delta$ as small as we wish. Let us show that there is a neighborhood $\mQ$ of $(x^*,y^*) \in \Og \times \Og$ such that 
\begin{multline} \label{C:Claim1} \intl_{0}^\delta e^{i(|x-z(z_\ga,\delta)|^2-|y-z(z_\ga,\delta)|^2) \, \llg}\, h(z_\ga,\delta) \, d\delta = e^{i(|x-z_\ga|^2-|y-z_\ga|^2) \, \llg}\, a(x,y,z_\ga,\llg),\\  \mbox{ where }a(x,y,z_\ga, \llg) \in S^{-k}(\mQ \times \ga \times \rN).\end{multline} Indeed, consider the phase function of the left hand side $$\phi(x,y,z_\ga,\delta,\llg) = (|x-z(z_\ga,\delta)|^2-|y-z(z_\ga,\delta)|^2) \, \llg.$$ Taking the derivative with respect to $s$, we obtain
$$\phi_s(x,y,z_\ga,\delta,\llg) = 2 \left< x-y, z_s(z_\ga,\delta)\right> \, \llg.$$
Therefore, 
$$\phi_s(x,y,z_\ga,0,\llg) = 2 \left< x-y, \tau_2(z_\ga)\right> \, \llg,$$ where $\tau_2(z_\ga)$ is the unit vector tangent to $\mS$ and normal to $\ga$ (pointing inward to $\Ga$). From \eqref{E:xy} and the fact that $\Og$ is convex, we easily observe that 
\begin{equation} \label{E:nozero} \left<x^*-y^*, \tau_2(z^*) \right> \neq 0. \end{equation}
Therefore, by choosing $\delta>0$ small enough, we may assume that $$\phi_s(x,y,z,0,\llg)  \neq 0 \mbox{ for all } (x,y,z_\ga,\delta,\llg) \in \mQ \times \ga \times [0,\delta] \times (\rN\setminus 0),$$ where $\mQ$ is a small neighborhood of $(x,y) \in \Og \times \Og$. Taking integration by parts, we obtain for all $(x,y) \in \mQ$:
\begin{multline*} \intl_{0}^\delta e^{i(|x-z(z_\ga,\delta)|^2-|y-z(z_\ga,\delta)|^2) \, \llg}\, h(z_\ga,\delta) \, d\delta = e^{i(|x-z_\ga|^2-|y-z_\ga|^2) \, \llg}\, \frac{h(z_\ga,0)}{i \phi_s(x,y,z_\ga,0,\llg)} \\ + \intl_{0}^\delta e^{i(|x-z(z_\ga,\delta)|^2-|y-z(z_\ga,\delta)|^2) \, \llg}\, \pdh_s \Big(\frac{h(z_\ga,\delta)}{i \phi_s(x,y,z_\ga,\delta,\llg)}\Big) \, d\delta.
\end{multline*}
Continuing the integration by parts 
\begin{multline*} \intl_{0}^\delta e^{i(|x-z(z_\ga,\delta)|^2-|y-z(z_\ga,\delta)|^2) \, \llg}\, h(z_\ga,\delta) \, d\delta = e^{i(|x-z_\ga|^2-|y-z_\ga|^2) \, \llg}\,\sum_{l=0}^{k} H_{l}(x,y,z_\ga,0,\llg ) \\ + \intl_{0}^\delta e^{i(|x-z(z_\ga,\delta)|^2-|y-z(z_\ga,\delta)|^2) \, \llg}\, \pdh_s H_{k+1}(x,y,z_\ga,\delta) \, d\delta.
\end{multline*}
Here, $H_{l}$ is homogeneous of degree $-l-1$ with respect to $\llg$. That is, 
\begin{eqnarray*} \intl_{0}^\delta e^{i(|x-z(z_\ga,\delta)|^2-|y-z(z_\ga,\delta)|^2) \, \llg}\, h(z_\ga,\delta) \, d\delta = e^{i(|x-z_\ga|^2-|y-z_\ga|^2) \, \llg} a(x,y,z_\ga,\delta,\llg).
\end{eqnarray*}
Here,
\begin{eqnarray*} a(x,y,z_\ga,\llg) = \sum_{l=0}^{k} H_l(x,y,z_\ga,0,\llg )+ R_{k}(x,y,z_\ga,\llg) \end{eqnarray*}
and $$R_{k}(x,y,z_\ga,\llg) =  \intl_{0}^\delta e^{i[(|x-z(z_\ga,\delta)|^2-|y-z(z_\ga,\delta)|^2) - (|x-z_\ga|^2-|y-z_\ga|^2)] \, \llg}\, \pdh_s H_{k+1}(x,y,z_\ga,\delta) \, d\delta.$$
Using standard integration by parts as above, one can obtain $$R_{k} \in S^{-k-1} ((\mQ \times \ga) \times \rN).$$
Moreover, from the definition of $H_{l}$ we get $H_{l} = 0$ for all $0 \leq l \leq k-1$. Therefore, 
\begin{eqnarray*} a(x,y,z_\ga,\llg) = H_k(x,y,z_\ga,0,\llg) + R_{k}(x,y,z_\ga,\llg).\end{eqnarray*}
This finishes the proof of (\ref{C:Claim1}).

\medskip

Now let us write:
\begin{eqnarray*}\mu_\mc(x,y) =  \frac{1}{2 \, \pi^3}\intl_{\rN} \intl_{\ga} e^{i(|x-z_\ga|^2-|y-z_\ga|^2) \, \llg}\, a(x,y,z_\ga,\llg) \,\llg^2 \,dz_\ga  \, d\,\llg.\end{eqnarray*}
Applying Lemma~\ref{L:F}, we obtain $\mu_\mc|_{\mQ} \in I^{-k - \frac{1}{2}}(\Llg)$. 
\end{proof}


\section{Numerical demonstrations} \label{S:Num}

In this section we investigate reconstructions from limited view circular mean data in a series of numerical experiments. Here, we focus on the two dimensional problem for illustration reasons. We will show that our theoretical results from Section~\ref{S:2D} directly translate into practice and, in particular, that they can be used to significantly improve the reconstruction quality.

\medskip


\paragraph{Experimental setup.}  In all of our experiments we consider the phantom consisting of a disc with radius $0.3$ centered at the origin, see Fig.~\ref{fig:fig1} \subref{subfig:OP}, where the image size of the phantom was chosen to be $2048 \times 2048$.  In our experiments, we will numerically illustrate the visible and invisible singularities, as well as boundary singularities for different angular range. We will also numerically investigate the difference between singly and doubly visible singularities. It is therefore useful to keep in mind that all the singularities of the original phantom are located on the circle of radius $0.3$ centered at the origin, and the directions of all singularities are given by normal (orthogonal) directions to the circle at the location of the singularity. 

In what follows, we assume that the limited view data of this phantom are collected on a circular arc of the form
$$
\Ga_{b}=\{z(s) = (\cos s, \sin s): 0 \leq s \leq b\}.
$$
In all of our experiments we computed the circular means of the phantom analytically and, to obtain limited view data, we sampled the data on $\Ga_\frac{\pi}{2}$ (first experimetn) and $\Ga_\frac{3\pi}{2}$ (second experiment).  In each experiment, we chose the number of angular samples $n_{a}$ as well as the number of radial samples $n_{r}$ independently of the angular range as $n_a= n_r=2048$. Given this data we then implemented and applied the reconstruction formula $\mT_0 f = \mB \chi_{\Ga_{b}} \mP \mR f$ in Matlab (cf. \eqref{no_smoothing}) where $\chi_{\Ga_{b}}$ is the characteristic function of  $\Ga_{b}$. In this situation, no artifact reduction was performed. To incorporate artifact reduction into the reconstruction formula, we also implemented the modified reconstruction formula $\mT f = \mB \chi \mP \mR f$ where $\chi$ was constructed to be smooth in the interior $\Int(\Ga_{b})$ of $\Ga_{b}$ and, at the same time, to vanish to some order $k>0$ at the end points of $\Ga_{b}$ (cf.  \eqref{E:mT} and subsequent discussion). More precisely, in our experiments we consider the following construction. Let
\begin{equation}
\label{eq:smooth cutoff}
	H(s) = \frac{s(b -s)}{s(b -s) + \epsilon}, \;\text{ and }\; h(s) = \frac{H(s)}{H(\frac{b}{2})}.
\end{equation}
Then, $h \in C^\infty((0, b))$ and 
\begin{itemize}
	\item[i)] $0<h(s)\leq 1$ for $0<s< b$, $h(\frac{b}{2}) =1$, 
	\item[ii)] $h$ vanishes to order $1$ at  $s=0,b$. 
\end{itemize}
Let $\chi$ be defined as $\chi(z(s)) = h(s)$, then $\mT$ is smoothing of order one. The parameter $\epsilon>0$  controls how close the function $\chi$ is to the constant function $1$. The closer $\epsilon$ is to $0$, the closer $h$ is  to the constant function $1$ on $(0,b)$, see Fig.~\ref{fig:1q-simple_smoothing_eps_var_power_var}~\subref{subfig:h-eps}. That is, the smaller $\epsilon$, the closer is the function $\chi$ to the constant function $1$ on $\Ga$. We also consider smoothing of orders $2$ and $3$. The corresponding smoothing functions are defined through $h^2(s)$ and $h^3(s)$, respectively. In general, we set 
\begin{equation}
\label{eq:smooth cutoff chi}
	\chi^{k}(z(s)) = h^{k}(s),\quad k=1,2,3,\dots\;.
\end{equation}
The graphs of the functions $h^{k}$ for $k=1,2,3$ with $\epsilon=0.2$ are shown in Fig.~\ref{fig:1q-simple_smoothing_eps_var_power_var}~\subref{subfig:h-order}. 

\begin{figure}[ht]
\centering
 \subfloat[short for lof][Smoothing function $h$ for different values of $\epsilon$]{
   \includegraphics[width=0.33\linewidth]{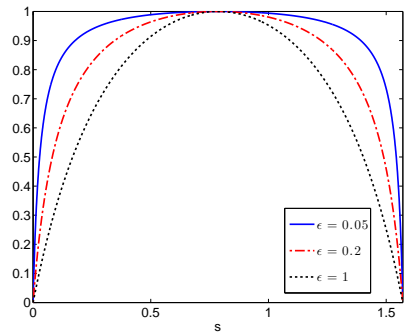}   
   \label{subfig:h-eps}
 }
  \hspace{50pt}
 \subfloat[short for lof][Smoothing functions $h^{k}$ of orders $k=1,2,3$]{
   \includegraphics[width=0.33\linewidth]{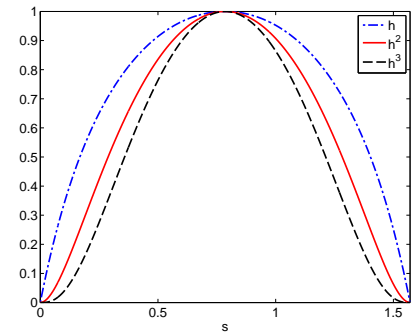}
   \label{subfig:h-order}
}

\caption[short for lof]{Smoothing functions $h^{k}$ for different values of $\epsilon$ and for different smoothing orders at the ends of the angular range, cf. \eqref{eq:smooth cutoff}-\eqref{eq:smooth cutoff chi}. } 
\label{fig:1q-simple_smoothing_eps_var_power_var}
\end{figure}



\paragraph{Experiment 1:} In our first experiment we assume the data are collected on the acquisition surface $\Ga_\frac{\pi}{2}$ (quarter of the unit circle) and consider reconstructions without as well as with artifact reduction (using the smoothing function \eqref{eq:smooth cutoff} with parameter $b=\frac{\pi}{2}$). We  numerically investigate the effect of changing the reconstruction parameters $\epsilon$ as well as the effect of changing the smoothing order $k$ at the ends of the angular range. The results of this experiment are presented in Fig.~\ref{fig:fig1}-\ref{fig:1q-order_varies}. 

Before examining the reconstructions, let us first note that the visible singularities at the considered angular range $[0,\frac{\pi}{2}]$ are located on two open quarters of the circle: first (upper right) and third (lower left). Accordingly, the invisible singularities are located on the other two quarters of the circle, whereas the boundary singularities correspond to the boundary of the acquisition surface $\Ga_\frac{\pi}{2}$ and, therefore, are located at the four points $(0,\pm 0.3)$, $(\pm 0.3,0)$. These characterizations are due to the analysis and explanations that we presented in Section~\ref{S:2D}. To numerically verify those theoretical findings, let us now examine the reconstruction in Fig.~\ref{fig:fig1}. Our observations are as follows:


%
\begin{figure}[t]
\centering
 \subfloat[short for lof][Original phantom]{
   \includegraphics[width=0.3\linewidth]{original_phantom_N2048_Rmax4_pi2}
   \label{subfig:OP}
 }
  \hspace{20pt}
 \subfloat[short for lof][Reconstruction]{
   \includegraphics[width=0.3\linewidth]{reconstr_fl_N2048_na8192_nr2048_p25_s0}
   \label{subfig:RCNS}
}
\caption[short for lof]{Reconstruction from limited view data, collected on $\Ga_{\frac{\pi}{2}}$ (quarter of the unit circle), using the standard reconstruction operator $\mT_{0}$ with no artifact reduction. The acquisition surface $\Ga_{\frac{\pi}{2}}$ is illustrated by the green line in the phantom image \subref{subfig:OP}. }
\label{fig:fig1}
\end{figure}

\begin{itemize}
\item[a)] All visible singularities are reconstructed sharply. They visually appear to be of the same order as the original singularities (jump from red to blue). 

\item[b)] The invisible singularities are smoothed away and, hence, not present in the reconstruction. This can be seen from the fact that there are no sharp boundaries (intensity jumps) along invisible directions.

\item[c)] Added singularities (artifacts) are generated along four circles, each of them touches the disc (phantom) tangentially. Moreover, we observe that two circles are concentric and centered on the x-axis, and the other two are concentric and centered on the y-axis. More precisely, the added artifacts are located on circles that are centered at the boundary points of the acquisition surface $\Ga_{\frac{\pi}{2}}$ (which is illustrated by the green curve in Fig. \ref{fig:fig1}\subref{subfig:OP}) and that are tangent to a singularity of the original phantom. That is, the artifacts are generated by the boundary singularities at $(0.3,0)$, $(-0.3,0)$, $(0,0.3)$, and $(0,-0.3)$. By further examining the artifacts in Fig. \ref{fig:fig1}\subref{subfig:RCNS}, we also observe that the jumps along the added artifact circles are not as sharp as in the case of visible singularities. This indicates that the added artifacts are weaker than original (generating) singularities. In fact, our theoretical analysis shows that they are $\frac{1}{2}$-order weaker.
%
\end{itemize}
Summing up, this experiment shows that the above observations correspond to our theoretical findings stated in Propostion \ref{P:wavefront} and  Theorem~\ref{T:Main1}.

\medskip

\noindent In the next step we investigate performance of the modified (artifact reduction) reconstruction operator $\mT$. To that end, we use the operator $\mT$ with the cutoff function $\chi^{k}$ defined in \eqref{eq:smooth cutoff}-\eqref{eq:smooth cutoff chi} (cf. also Fig. \ref{fig:1q-simple_smoothing_eps_var_power_var}), and apply it to the limited view data. Note that the cutoff function $\chi^{k}$ is smooth in the interior of $\Ga_{\frac{\pi}{2}}$ and vanishes to an order $k$ at the end points of $\Ga_{\frac{\pi}{2}}$. According to Theorem~\ref{T:Main1}, the reconstructions obtained through $\mT$, will exhibit added artifacts that are $k+\frac{1}{2}$ orders smoother than the original singularities. Therefore, the degree of artifact reduction is linked to the order $k$ and we expect the operator $\mT$ to mitigate artifacts more when the bigger the order $k$ is. In addition to that, we expect that the strength of artifacts is influenced by the parameter $\epsilon$ (see \eqref{eq:smooth cutoff} and the definition of $h^{k}$). 

To investigate the practical performance, we computed a series of artifact reduction reconstructions by varying the parameters $\epsilon$ and $k$. The results are shown in Fig. \ref{fig:1q-simple} and \ref{fig:1q-order_varies}. First, we observe that in all reconstructions shown in Fig. \ref{fig:1q-simple} and \ref{fig:1q-order_varies} most of the visible singularities are well reconstructed. In Fig. \ref{fig:1q-simple}, we have displayed some reconstructions using smoothing order $k=1$ and varying the parameter $\epsilon$. Here we observe that for $\epsilon=0.05$ almost no artifact reduction happens. This is due to the fact that, in the discretization regime, $\chi$ changes very fast near the endpoints of $\Ga_{\frac{\pi}{2}}$ and, hence, $\chi$ behaves like a discontinuous function. The artifact reduction gets clearer as we increase the value of $\epsilon$.  Next, we consider the effect of varying the smoothing parameter $k$ for a fixed $\epsilon=0.2$. The corresponding reconstructions are shown in Fig.~\ref{fig:1q-simple_smoothing_eps_var_power_var}.  As expected, the artifacts get weaker (are better reduced) as the order increases. This is in accordance with our theoretical characterizations in Theorem~\ref{T:Main1}.

\begin{figure}[t]
\centering
 \subfloat[short for lof][$k=1$, $\epsilon=0.05$]{
   \includegraphics[width=0.3\linewidth]{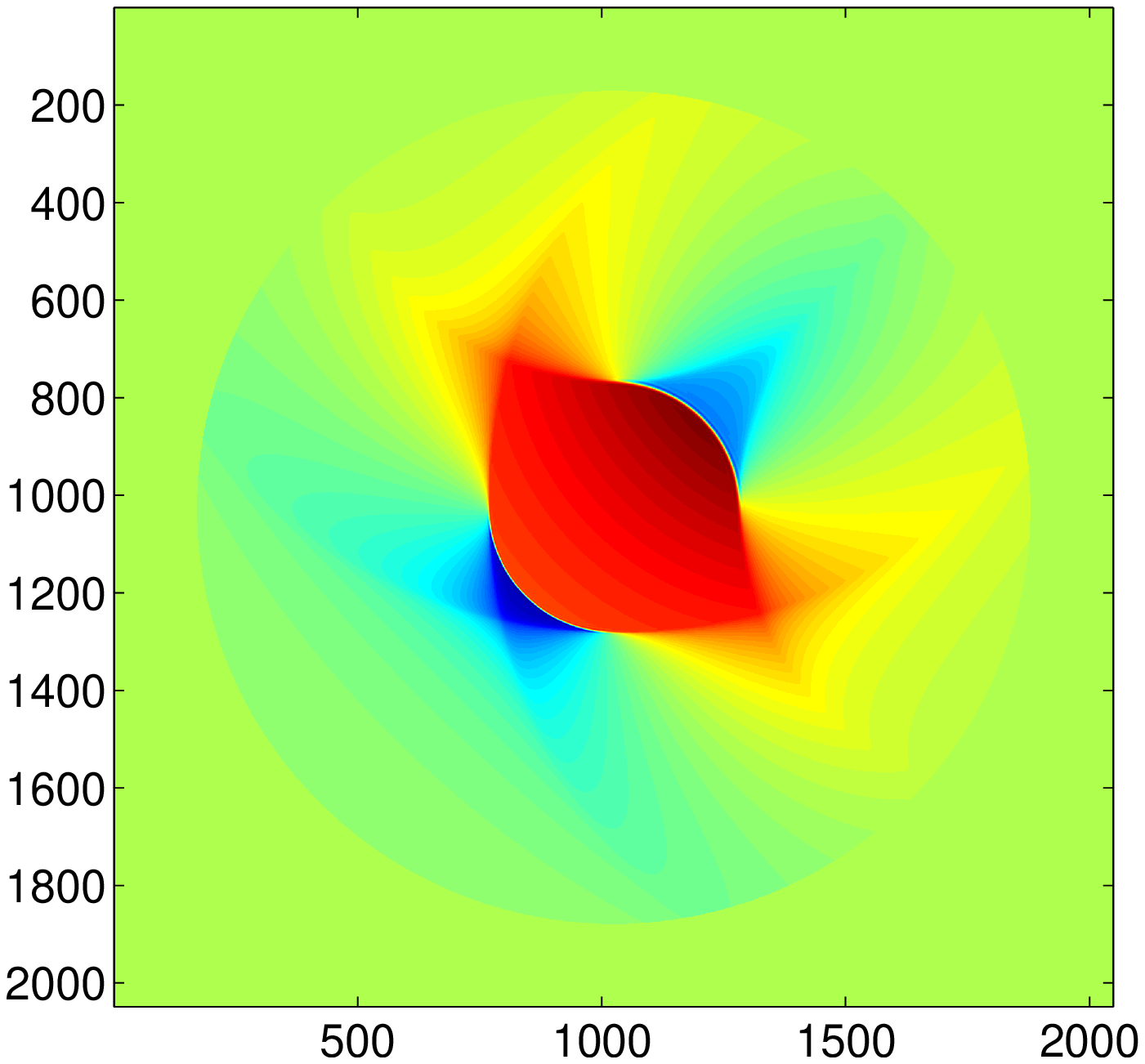}
   \label{subfig:eps3}
}  
\hfill
 \subfloat[short for lof][$k=1$, $\epsilon = 0.2$]{
   \includegraphics[width=0.3\linewidth]{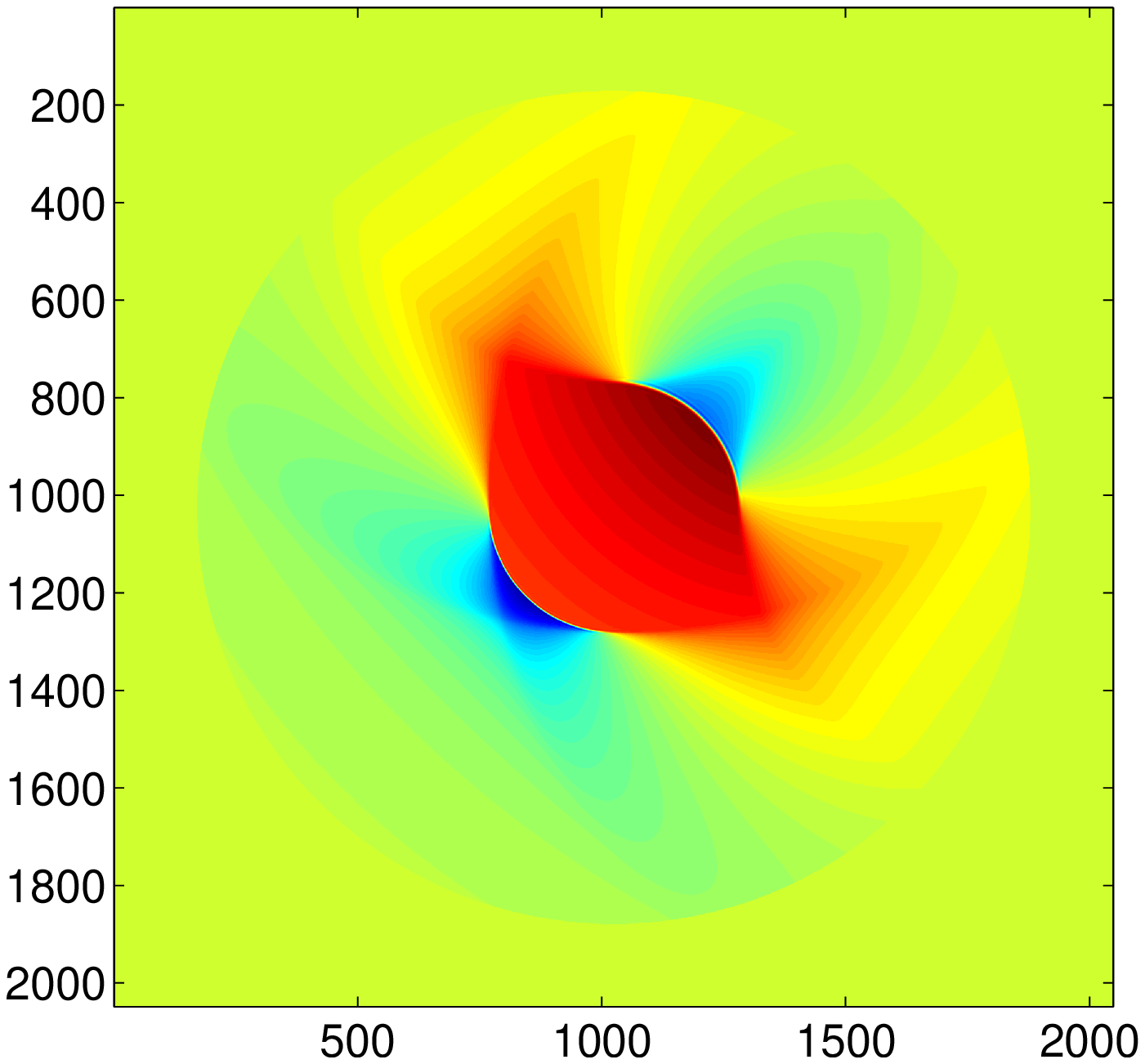}
   \label{subfig:eps5}
}
\hfill
 \subfloat[short for lof][$k=1$, $\epsilon= 1$]{
   \includegraphics[width=0.3\linewidth]{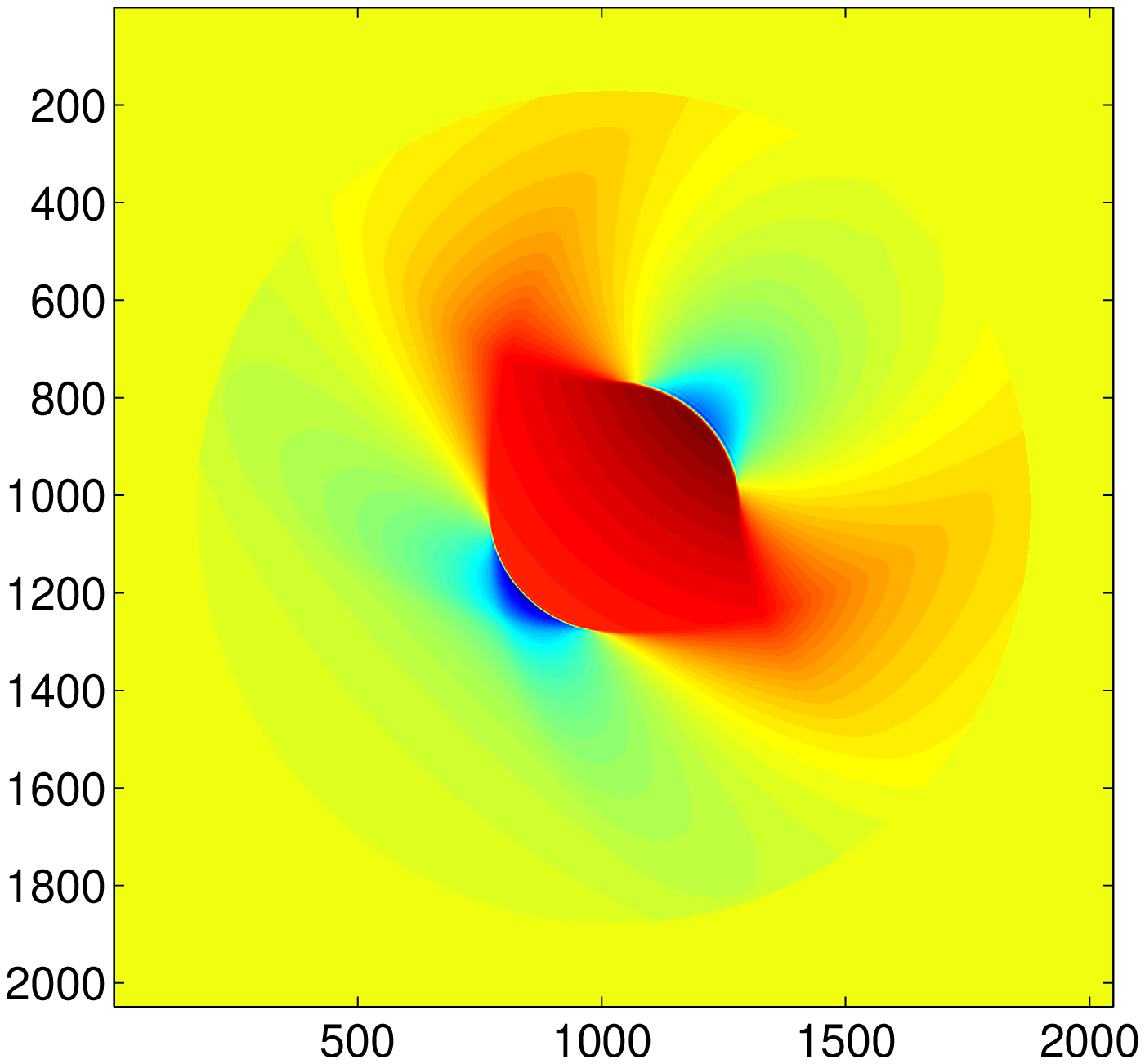}
   \label{subfig:eps8}
}
\caption[short for lof]{Reconstruction from limited view data, acquired on $\Ga_{\frac{\pi}{2}}$ (quarter circle), using the modified reconstruction operator $\mT$ (cf. \eqref{E:mT}) with the smoothing function $\chi^{1}$ defined in \eqref{eq:smooth cutoff}-\eqref{eq:smooth cutoff chi}. The figures illustrate the influence of the parameter $\epsilon$ on artifact reduction.}
\label{fig:1q-simple}
\end{figure}

\begin{figure}[ht]
\centering
 \subfloat[short for lof][$k=1$, $\epsilon = 0.2$]{
   \includegraphics[width=0.3\linewidth]{reconstr_fl_N2048_na8192_nr2048_p25_eps5}
}  
\hfill
 \subfloat[short for lof][$k=2$, $\epsilon = 0.2$]{
   \includegraphics[width=0.3\linewidth]{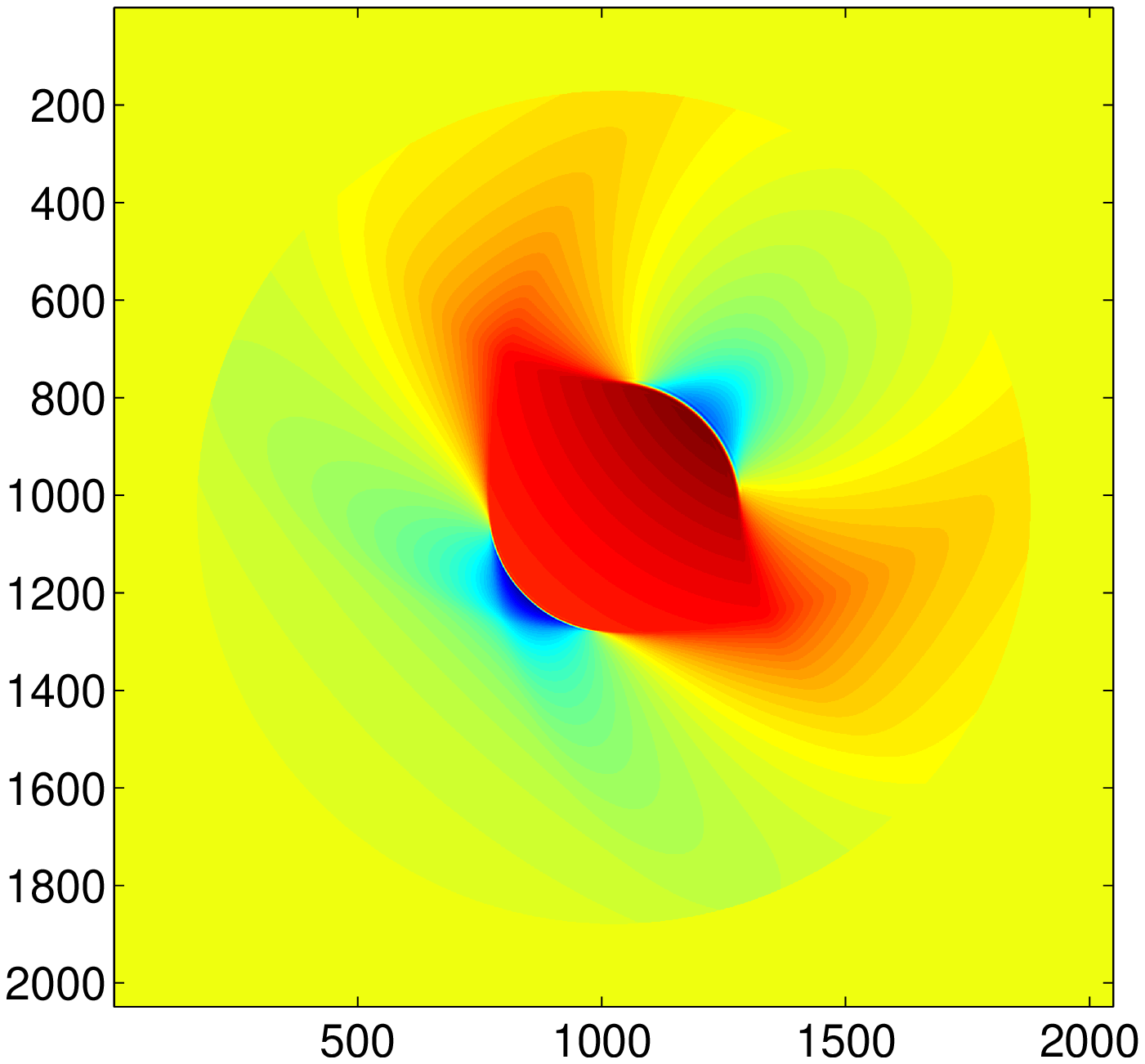}
}
\hfill
 \subfloat[short for lof][$k=3$, $\epsilon = 0.2$]{
   \includegraphics[width=0.3\linewidth]{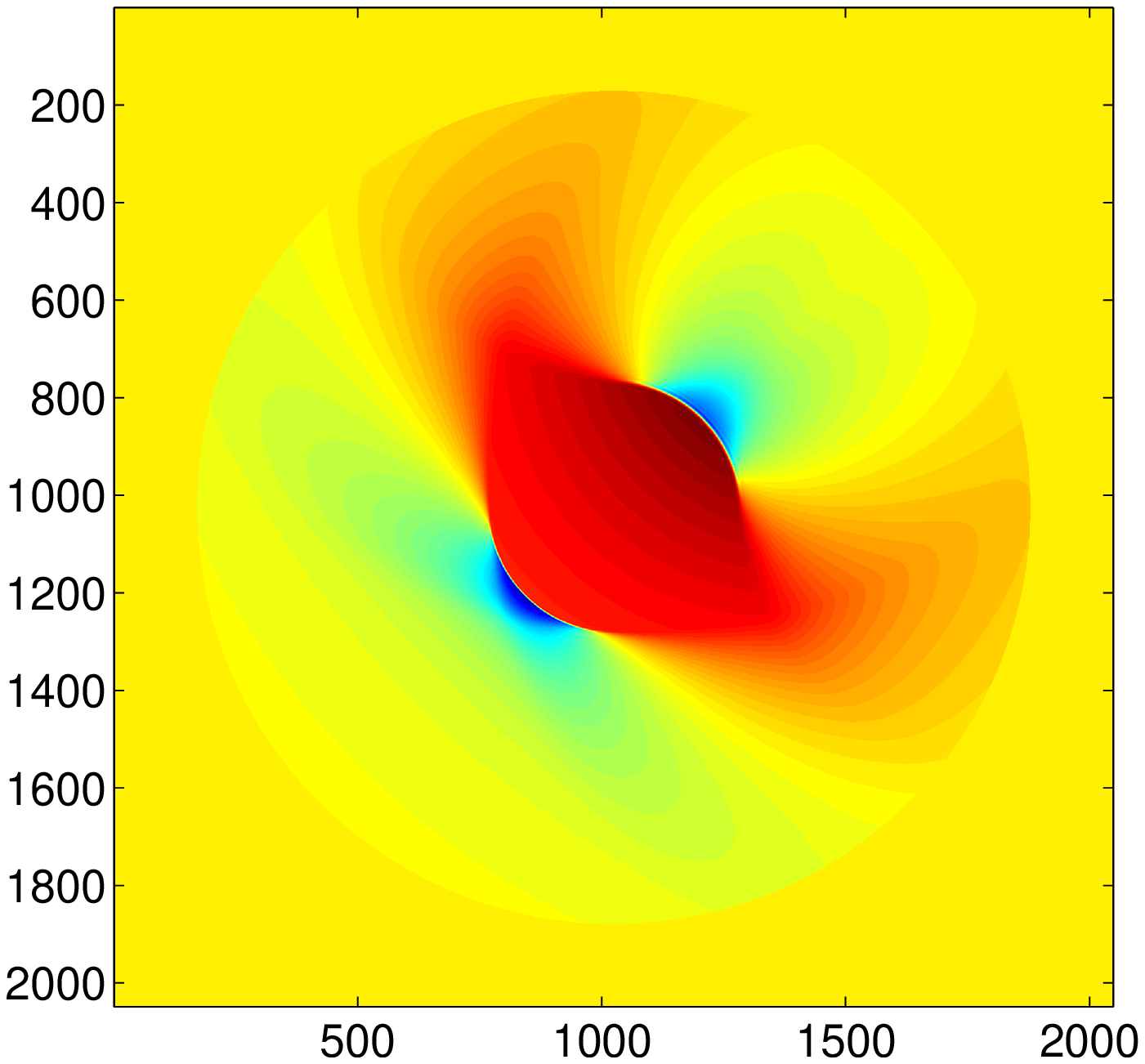}
}
%

\caption[short for lof]{Reconstruction from limited view data, acquired on $\Ga_{\frac{\pi}{2}}$ (quarter circle), using the modified reconstruction operator $\mT$ (cf. \eqref{E:mT}) with the smoothing function $\chi^{k}$ defined in \eqref{eq:smooth cutoff}-\eqref{eq:smooth cutoff chi}. The figures illustrate the influence of the smoothing order $k$ on artifact reduction for a fixed parameter $\epsilon=0.2$.}
\label{fig:1q-order_varies}
\end{figure}


\paragraph{Experiment 2.} Our second experiment follows the lead of our first experiment. Here, we only consider a larger angular range where we use limited view data collected on $\Ga_{\frac{3\pi}{2}}$ (three quarters of the unit circle). Again, we compute a series of reconstructions using the standard as well as the modified reconstruction operators, $\mT_{0}$ and $\mT$, respectively. The results of this experiment are shown in Fig.~\ref{fig:3q-no_smoothing} - \ref{fig:3q-order_original_smoothing}. 

Before we start, let us first remark that in this example all singularities of the phantom image are visible (they are located on the circle centered at the origin of radius $0.3$), and the locations of all boundary singularities are the same as in Experiment~1, namely $(\pm 0.3,0)$ and $(0,\pm 0.3)$. In contrast to Experiment 1 where all of the visible singularities were singly visible, we now have both types of visible singularities, doubly and singly visible ones.  Those on the first and third quarters are doubly visible, while those are on the second and fourth quarters are singly visible. See Section \ref{S:2D} for theoretical explanation. 

\begin{figure}[ht]
\centering
 \subfloat[short for lof][Original phantom]{
   \includegraphics[width=0.3\linewidth]{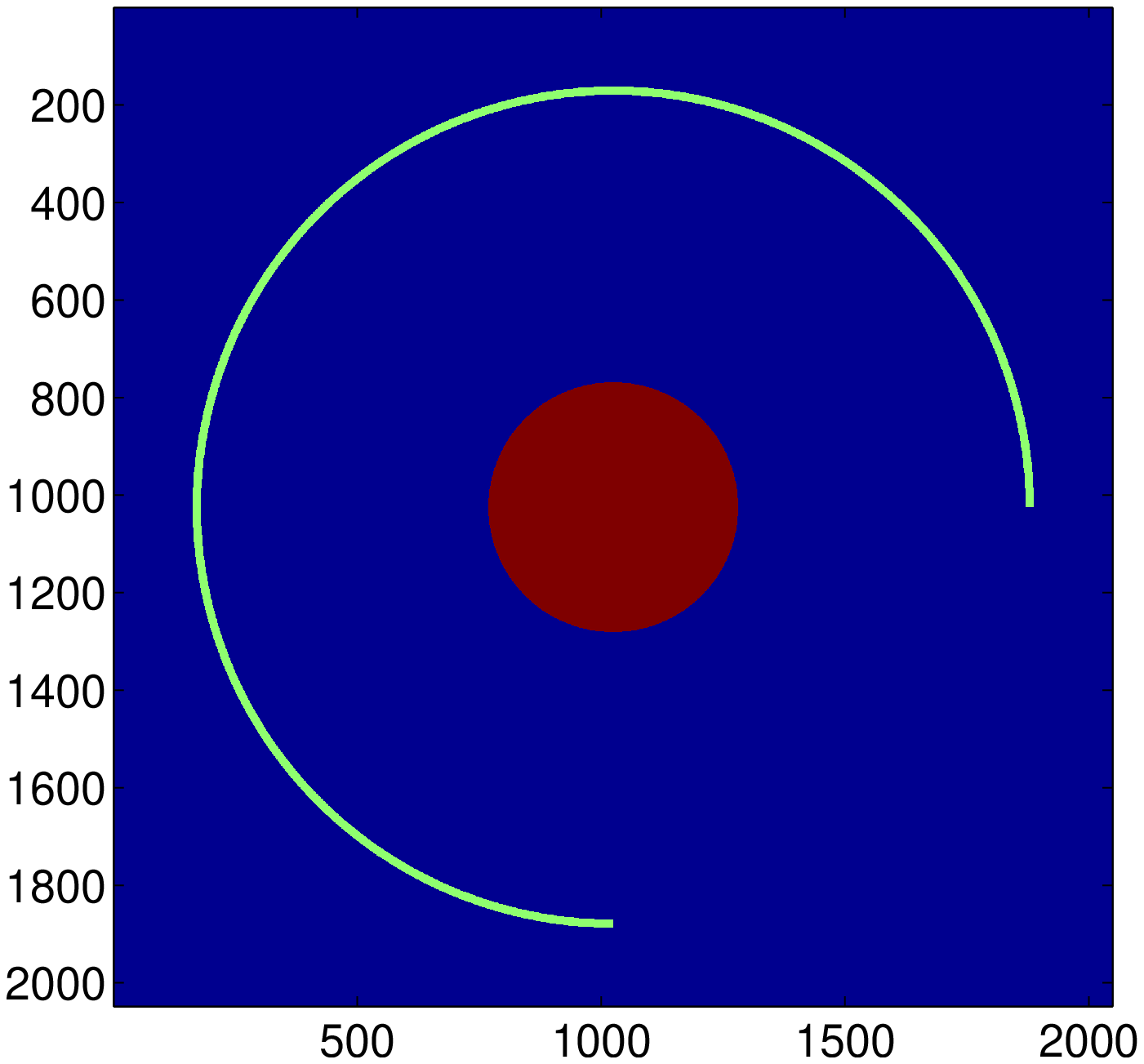}
   \label{subfig:3q-fig1}
 }
   \hspace{15pt}
 \subfloat[short for lof][Reconstruction]{
   \includegraphics[width=0.3\linewidth]{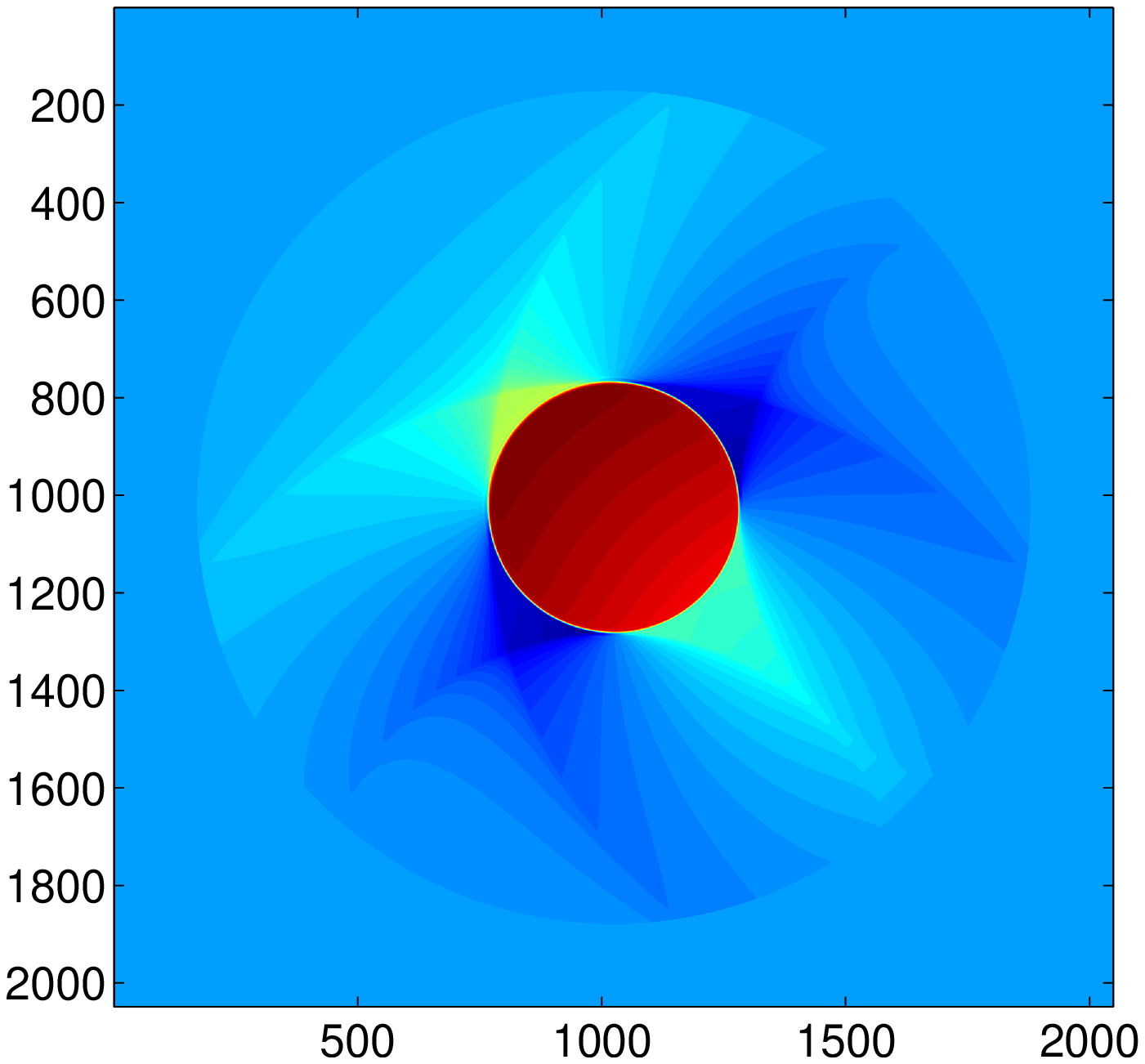}
   \label{subfig:3q-fig2}
}   
\caption[short for lof]{Reconstruction from limited view data, collected on $\Ga_{\frac{3\pi}{2}}$ (three quarters of the unit circle), using the standard reconstruction operator $\mT_{0}$ with no artifact reduction. The acquisition surface  $\Ga_{\frac{3\pi}{2}}$ is illustrated by the green line in the phantom image \subref{subfig:3q-fig1}.}
\label{fig:3q-no_smoothing}
\end{figure}

\begin{figure}[ht]
\centering 
 \subfloat[short for lof][$k=1$, $\epsilon=0.05$]{
   \includegraphics[width=0.3\linewidth]{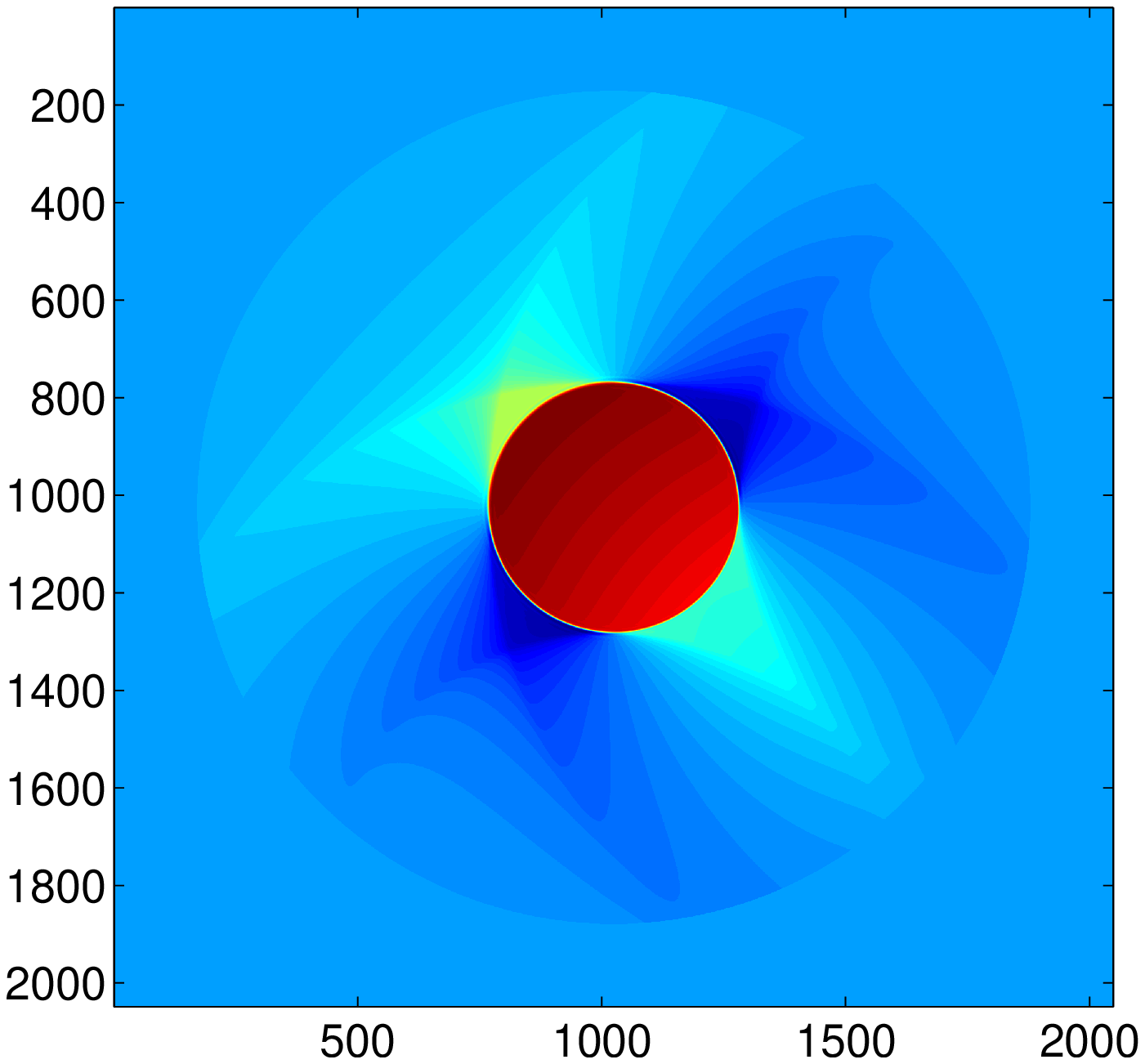}
   \label{subfig:fig2}
}
\hfill
 \subfloat[short for lof][$k=1$, $\epsilon=0.2$]{
   \includegraphics[width=0.3\linewidth]{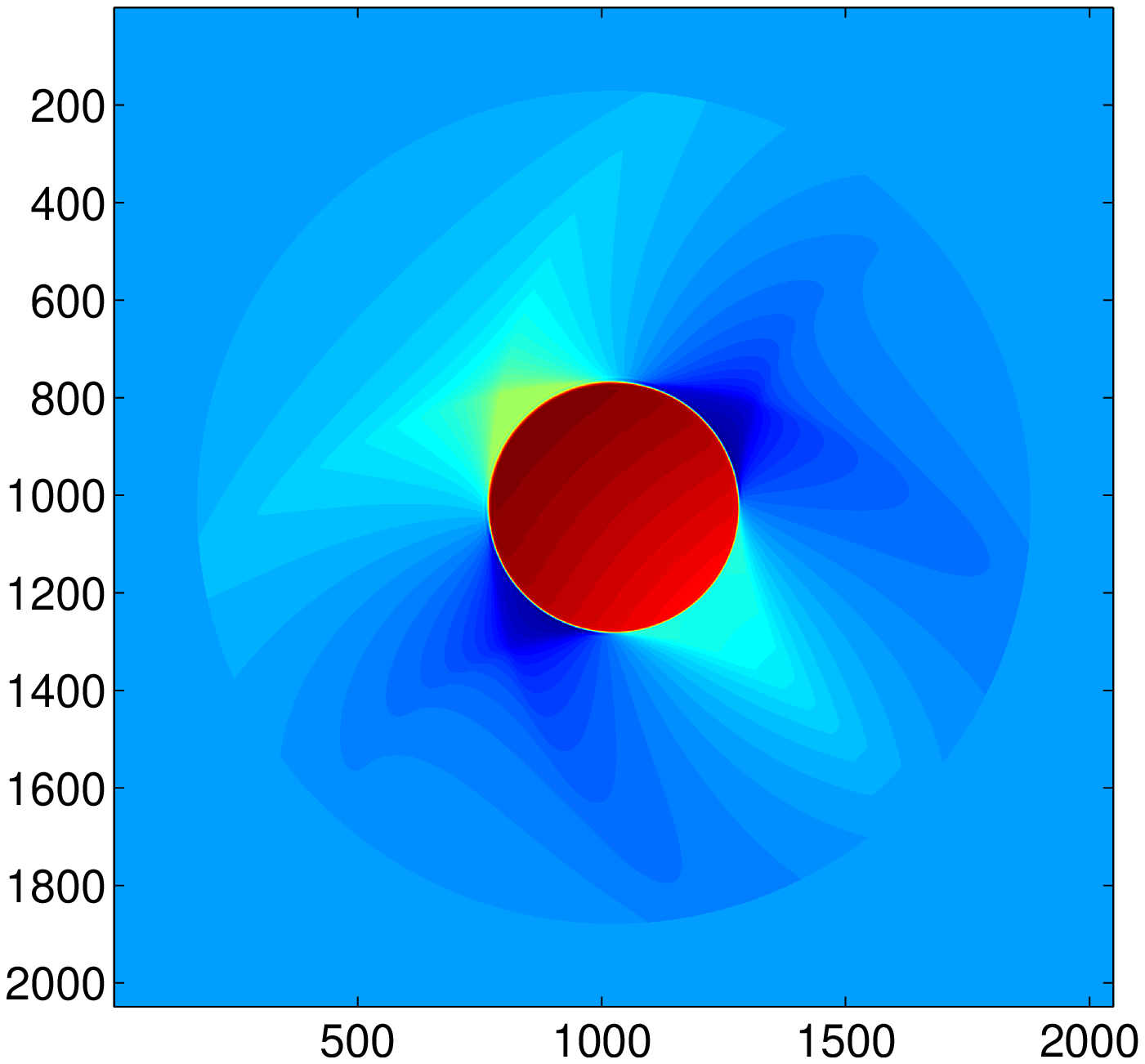}
   \label{subfig:fig2}
}
\hfill
 \subfloat[short for lof][$k=1$, $\epsilon=1$]{
   \includegraphics[width=0.3\linewidth]{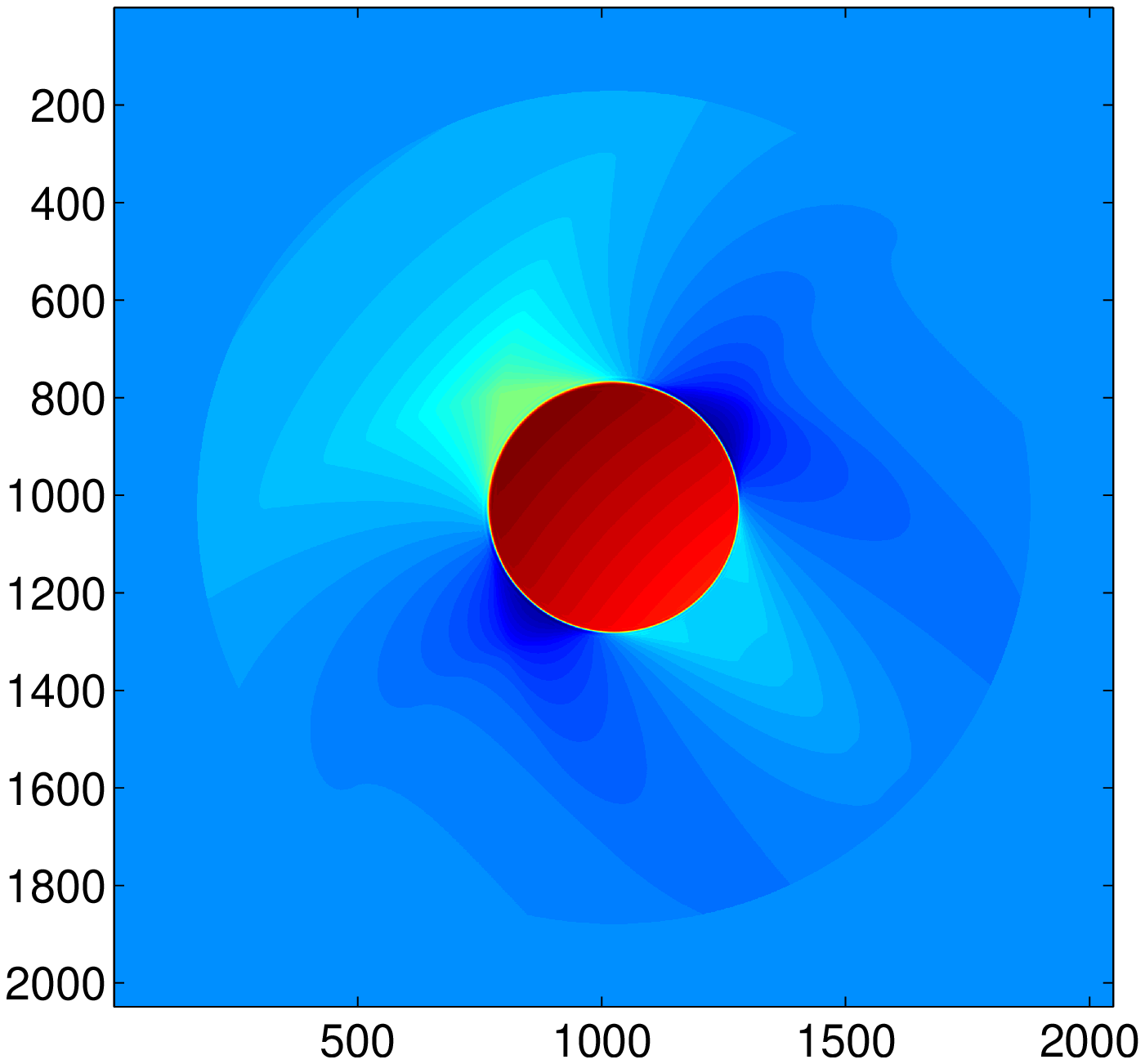}
   \label{subfig:fig2}
}
\caption[short for lof]{Reconstruction from limited view data, acquired on $\Ga_{\frac{3\pi}{2}}$ (three quarters of the unit circle), using the modified reconstruction operator $\mT$ (cf. \eqref{E:mT}) with the smoothing function $\chi^{1}$ defined in \eqref{eq:smooth cutoff}-\eqref{eq:smooth cutoff chi}. The figures illustrate the influence of the parameter $\epsilon$ on artifact reduction.}
\label{fig:3q-simple_original_smoothing}
\end{figure}

\begin{figure}[ht]
\centering 
 \subfloat[short for lof][$k=1$, $\epsilon=0.2$]{
   \includegraphics[width=0.3\linewidth]{reconstr_fl_N2048_na2731_nr2048_p75_ps10_eps5}
   \label{subfig:fig2}
}
\hfill
 \subfloat[short for lof][$k=2$, $\epsilon=0.2$]{
   \includegraphics[width=0.3\linewidth]{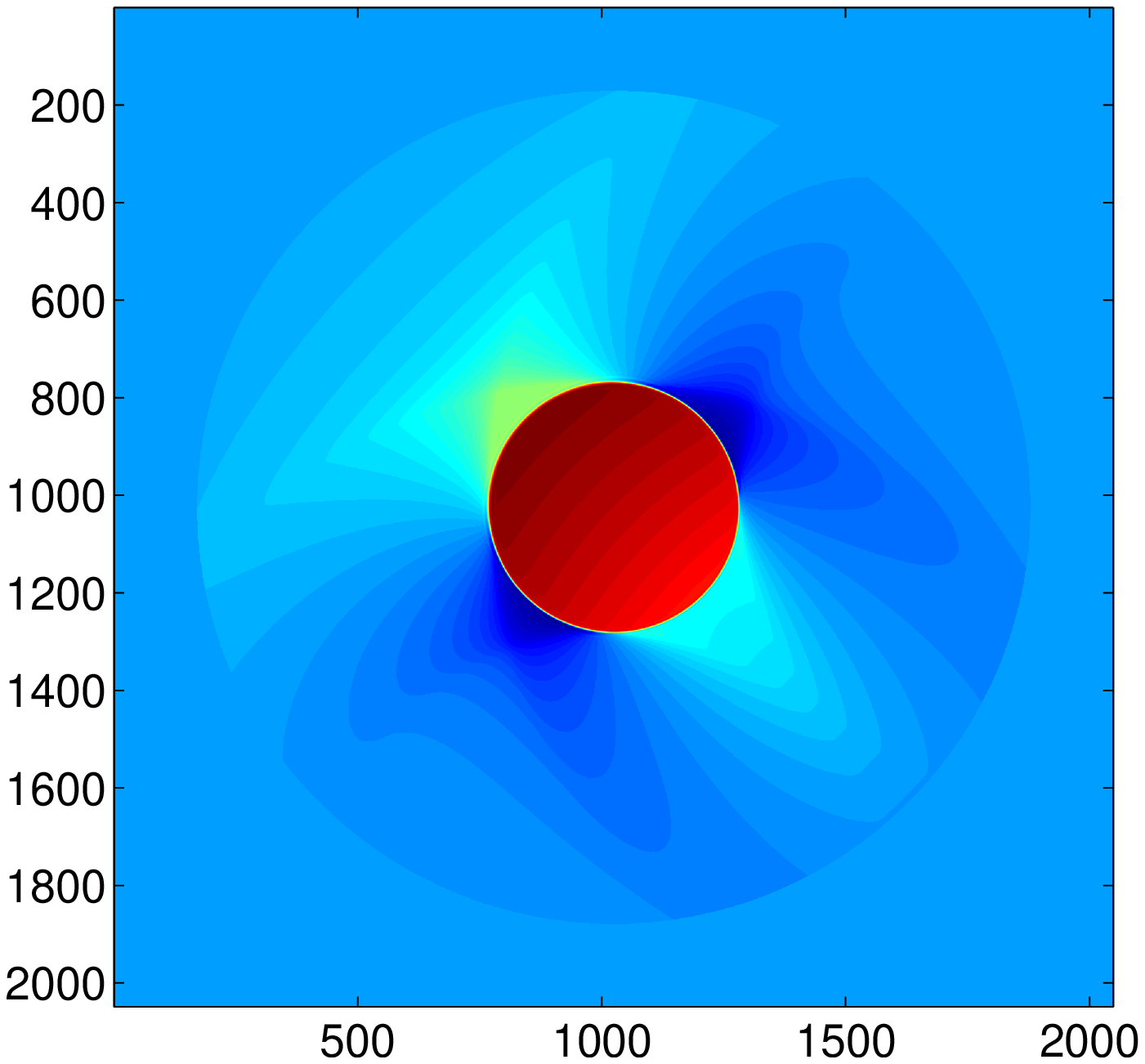}
   \label{subfig:fig2}
}
\hfill
 \subfloat[short for lof][$k=3$, $\epsilon=0.2$]{
   \includegraphics[width=0.3\linewidth]{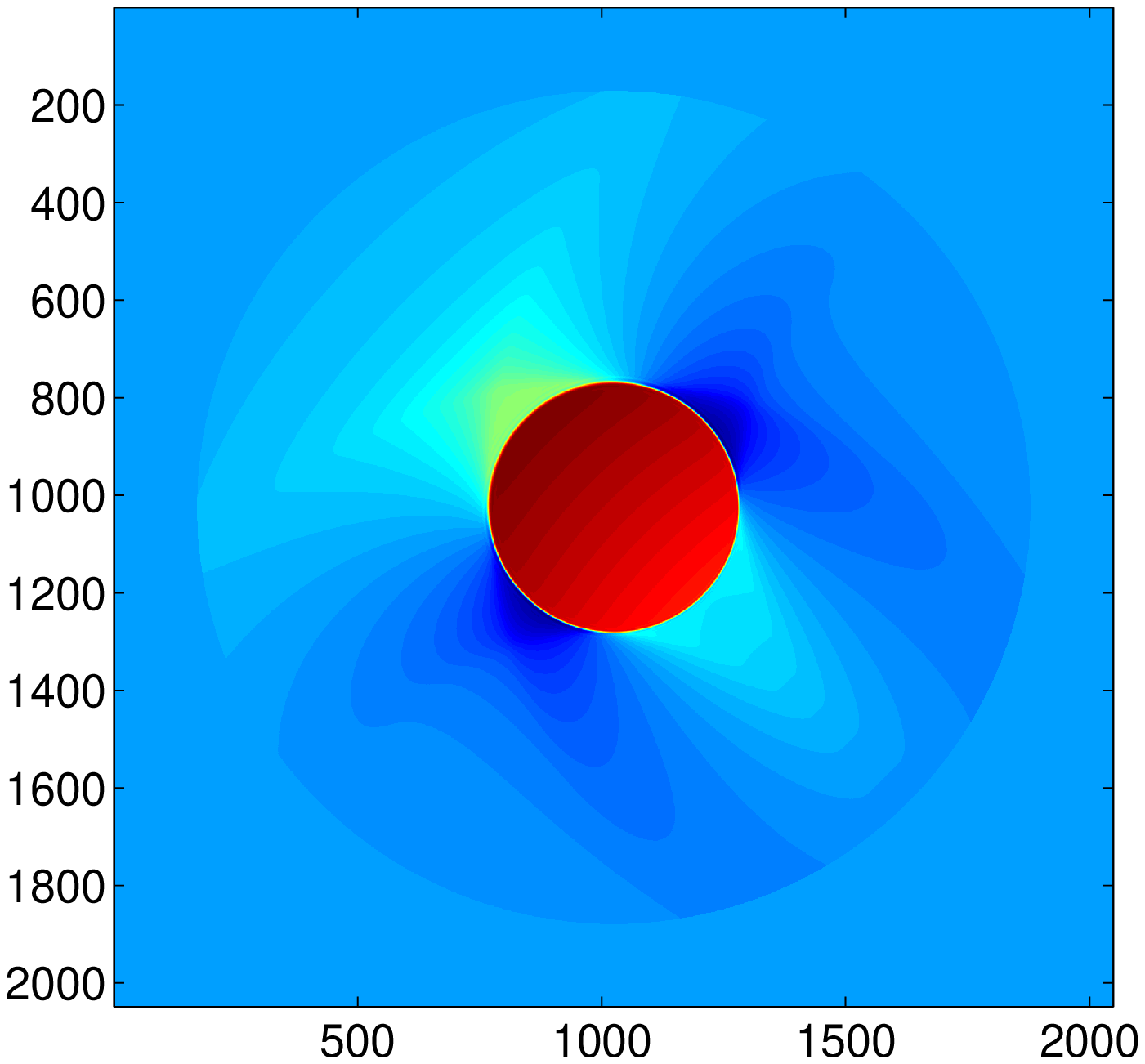}
   \label{subfig:fig2}
}
\caption[short for lof]{Reconstruction from limited view data, acquired on $\Ga_{\frac{3\pi}{2}}$ (three quarters of the unit circle), using the modified reconstruction operator $\mT$ (cf. \eqref{E:mT}) with the smoothing function $\chi^{k}$ defined in \eqref{eq:smooth cutoff}-\eqref{eq:smooth cutoff chi}. The figures illustrate the influence of the smoothing order $k$ on artifact reduction for a fixed parameter $\epsilon=0.2$.}
\label{fig:3q-order_original_smoothing}
\end{figure}

\medskip

By examining the reconstructions using the standard reconstruction formula $\mT_0$ (i.e. $\chi =\chi_\Ga$) in Fig.~\ref{fig:3q-no_smoothing} we easily observe that indeed all singularities of the phantom are reconstructed reliably. Here, the doubly visible singularities have more contrast than the singly visible ones. This is due to the fact that for each doubly visible singularity there are two positions on the acquisition arc  $\Ga_{\frac{3\pi}{2}}$ from which the singularity is visible, whereas there is only one position on this arc for a singly visible singularity. Mathematically, this is reflected by the different values of the principal symbol $\sg_0(x,\xi)$ of the reconstructions operator $\mT$, cf. Theorem~\ref{T:Main1}, where we can see that the principal symbol $\sg_0(x,\xi)=1$ if $(x,\xi)$ is doubly visible and $\sg(x,\xi) = \frac{1}{2}$ if $(x,\xi)$ is singly visible. In Fig.~\ref{fig:3q-simple_original_smoothing}, we further observe that added artifacts are generated on circles that are centered at the boundary points of $\Ga_{\frac{3\pi}{2}}$ and tangent to the boundary singularities. These artifacts, however, are not as strong as the reconstructed singularities, which is again in accordance with our theoretical results, see Theorem~\ref{T:Main1} and the discussion below. We again studied the performance of artifact reduction by using the modified reconstruction operator $\mT$ with the smoothing function $\chi^{k}$ for $b=\frac{3\pi}{2}$ (cf. \eqref{eq:smooth cutoff}-\eqref{eq:smooth cutoff chi}). The reconstruction results for varying parameters $\epsilon$ and for varying smoothing orders are shown in Fig.~\ref{fig:3q-simple_original_smoothing} and Fig.~\ref{fig:3q-order_original_smoothing}, respectively. Not surprisingly, we observe here the same behavior as in Experiment 1.



\paragraph{Experiment 3.} In our last experiment we investigate how the choice of the smooth cutoff function for $\mT$ influences the artifact reduction. To that end, we consider the same limited view situation as in the Experiment 2 where the data are collected on $\Ga_{\frac{3\pi}{2}}$ and define a new smoothing function $\chi$ which is equal to $1$ in the interior of $\Ga_{\frac{3\pi}{2}}$ and smoothly decreases to $0$ in transition regions of length $\epsilon$ at the boundary of $\Ga_{\frac{3\pi}{2}}$. To that end, we let \[
h_0(s)=\frac{1}{\epsilon^2} s(2\epsilon-s), \quad 0\leq s \leq \epsilon
\]
and
\begin{equation}
\label{eq:new cutoff}
	h_{\mathrm{new}}\left(\frac{3 \pi}{2} s\right)=\left\{
		\begin{array}{l}
			h_0(s), \hspace{40pt} 0\leq s \leq \, \epsilon, \\[5pt]
			1, \hspace{55pt}  \epsilon< s <1-\epsilon, \\[5pt]
			h_0(-s+1), \hspace{10pt}   1 -\epsilon  \leq s \leq  1.
		\end{array}
	\right.
\end{equation}
The parameter $\epsilon>0$ again controls how close the function $\chi$ is to the constant function $1$. Moreover, this function $h$ vanishes to order 1 at the endpoints of $\Ga_{\frac{3\pi}{2}}$. To obtain higher order smoothness at the endpoints we again consider integer powers of $h$ and set 
\begin{equation}
\label{eq:new cutoff chi}
	\chi_{\mathrm{new}}^{k}(z(s))=h_{\mathrm{new}}^{k}(s).
\end{equation}
A plot of the functions $h^{k}(s)$ is depicted in Fig. \ref{fig:function_h} for different values of $\epsilon$ and $k$. The corresponding limited view reconstructions are presented in Fig. \ref{fig:3q-2sm_perc_varies} and \ref{fig:3q-2sm_order_varies}.

The advantage of such a choice of the function $\chi$ lies in the fact that it is exactly (not approximately) equal to $1$ in the range $\frac{3 \pi}{2} \epsilon \leq s \leq \frac{3 \pi}{2}(1-\epsilon)$. Therefore, if $\epsilon$ is very small, most of the visible singularities are reconstructed up to the factor $1$ (if it is doubly visible) or $\frac{1}{2}$ (if it is singly visible). However, the (theoretical) disadvantage of such a choice is due to singularities of $\chi$ at the interior points $z(\frac{3 \pi}{2} \epsilon)$ and $z(\frac{3 \pi}{2}(1-\epsilon))$. According to our analysis in Section \ref{S:2D}, this may lead to the generation of added artifacts (located on circles that are centered around these points). However, since $\chi^{k}$ is $k$ order smoother at these inner points than at the endpoints, those new artifacts will be weaker than those rotating around the endpoints $z(0)$ and $z(\frac{3 \pi}{2})$. Indeed, as can be seen in Fig. \ref{fig:3q-2sm_perc_varies} and \ref{fig:3q-2sm_order_varies}, the new added artifacts are too weak to be recognized in the reconstructions. 

Concerning the influence of parameters $\epsilon$ and $k$, we arrive at similar observations as in Experiments 1 and 2. Comparing the reconstructions that were computed with different smoothing functions in Fig. \ref{fig:3q-order_original_smoothing} and Fig. \ref{fig:3q-2sm_order_varies} we observe that the new smoothing function leads almost always to a significantly better artifact reduction. For example, in Fig. \ref{fig:3q-2sm_order_varies}\subref{subfig:fig4}, the artifacts almost completely vanish and the phantom as well as the background are reconstructed very well. This examples shows that the choice of the smoothing function might influence artifact reduction performance significantly.

%

%
\begin{figure}[ht]
\centering
 \subfloat[short for lof][$h_{\mathrm{new}}^{1}$  for $\epsilon\in\{0.1,0.25,0.4\}$]{
   \includegraphics[width=0.33\linewidth]{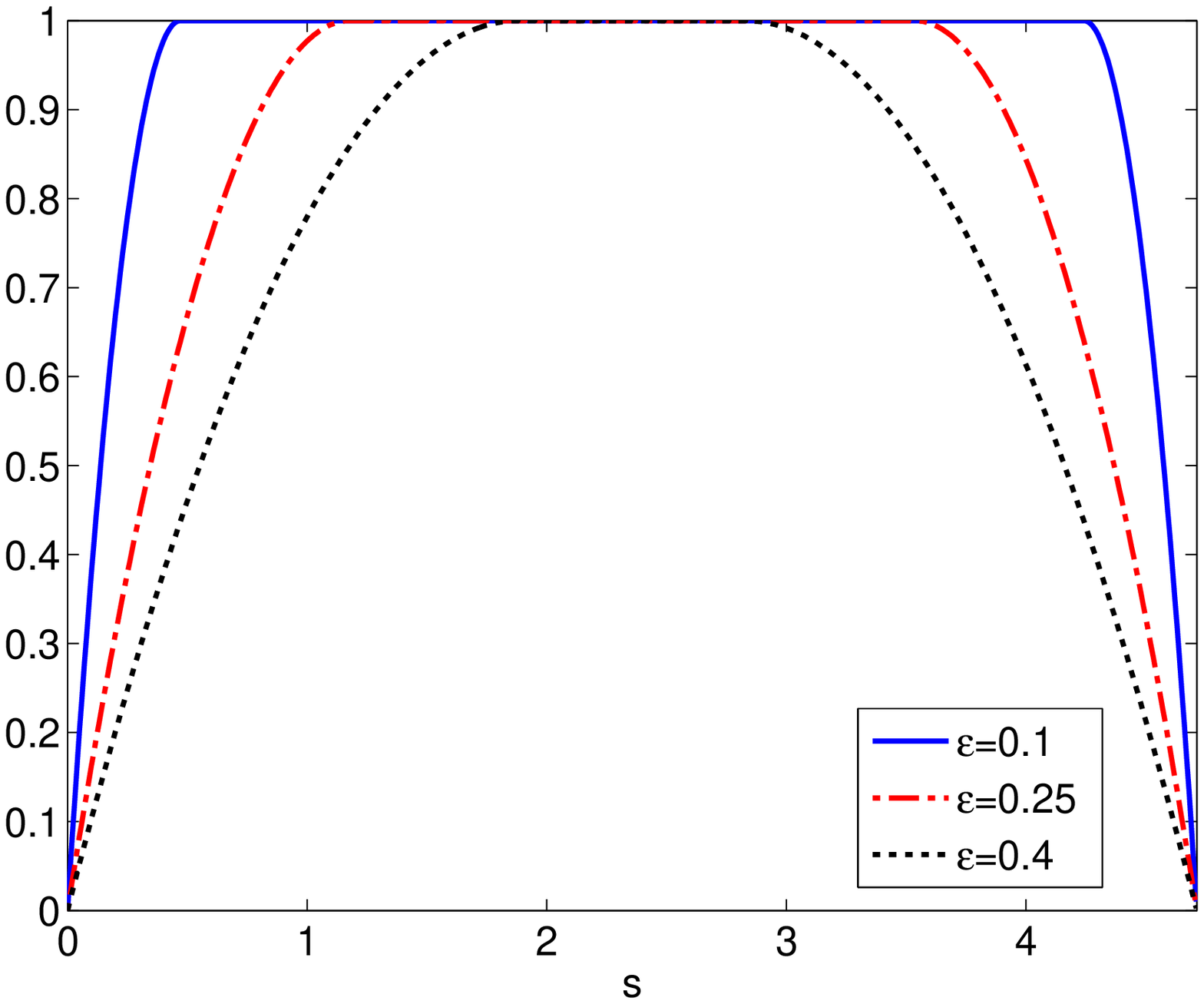}  
   \label{subfig:fig1}
 }
  \hspace{15pt}
 \subfloat[short for lof][$h_{\mathrm{new}}^{k}$  for $\epsilon=0.25$ and $k=1,2,3$]{
   \includegraphics[width=0.33\linewidth]{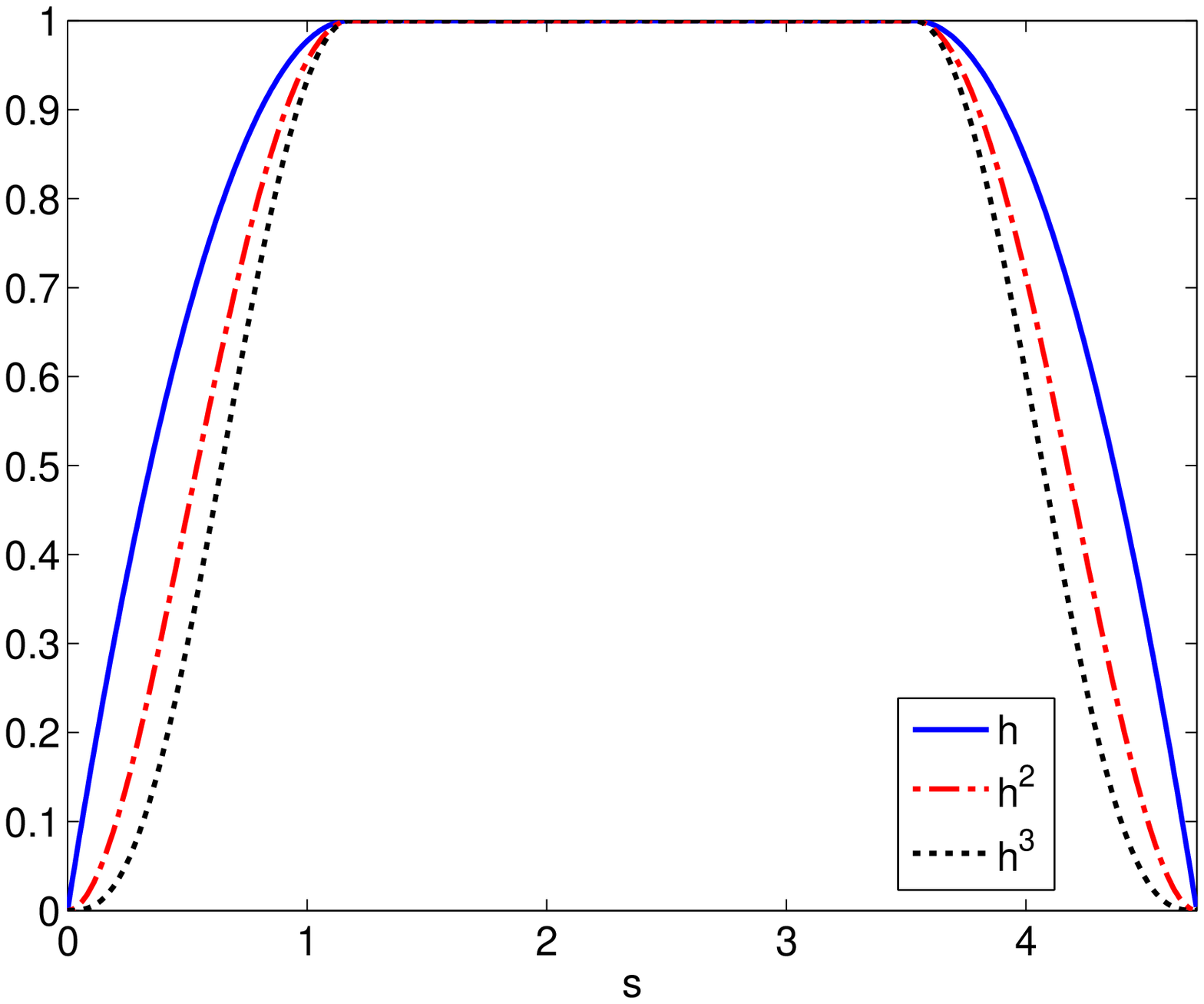}
   \label{subfig:fig2}
}

\caption[short for lof]{Alternative smoothing function $h_{\mathrm{new}}^{k}$ for different values of $\epsilon$ (length of smooth transition region at the boundary of the angular range) and for different smoothing orders $k$, cf. \eqref{eq:new cutoff}-\eqref{eq:new cutoff chi}.}
\label{fig:function_h}

\end{figure}

\begin{figure}[ht]
\centering

\subfloat[short for lof][$k=1$, $\epsilon=0.1$]{
   \includegraphics[width=0.3\linewidth]{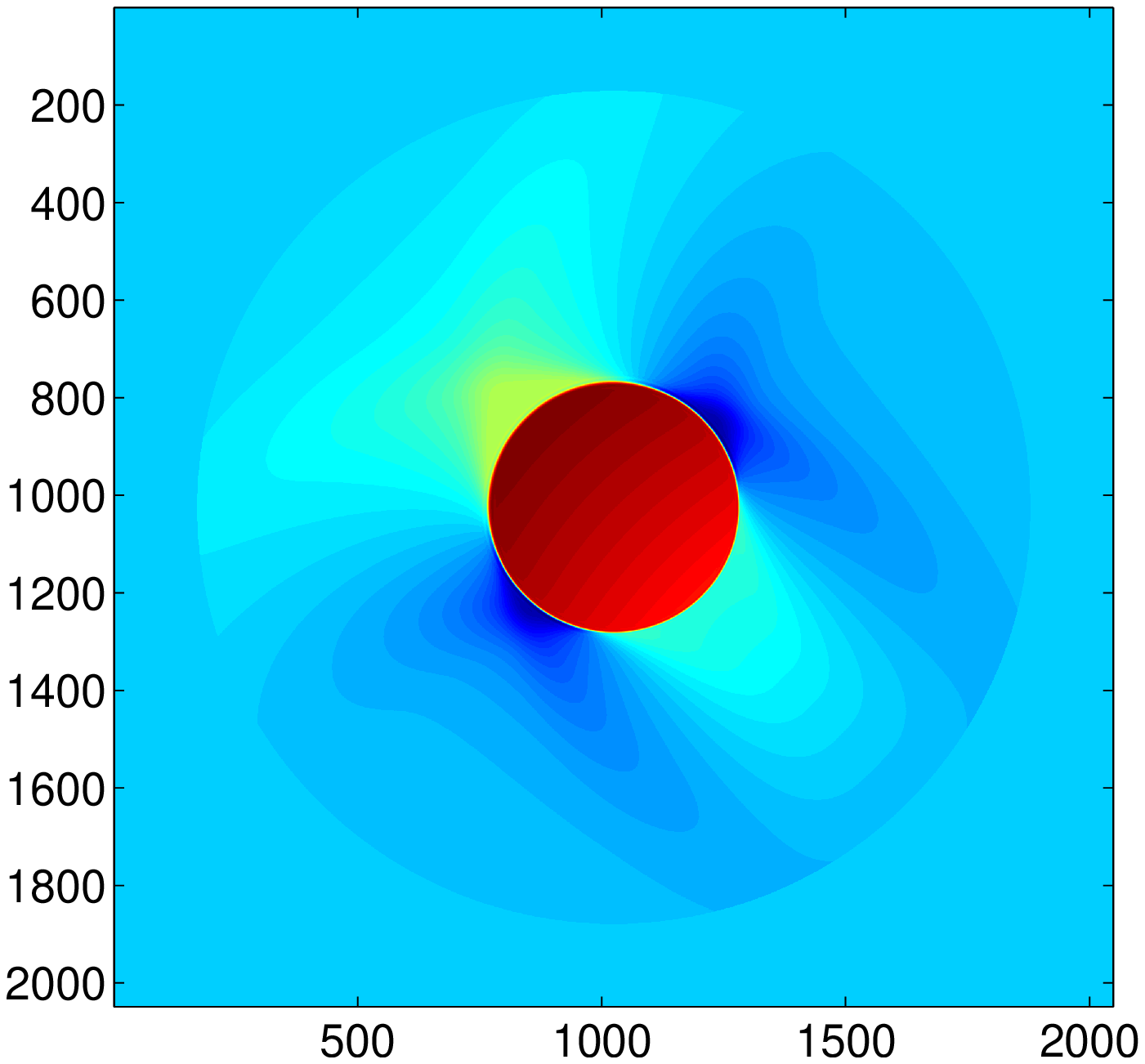}
   \label{subfig:fig1}
 }
\hfill
 \subfloat[short for lof][$k=1$, $\epsilon=0.25$]{
   \includegraphics[width=0.3\linewidth]{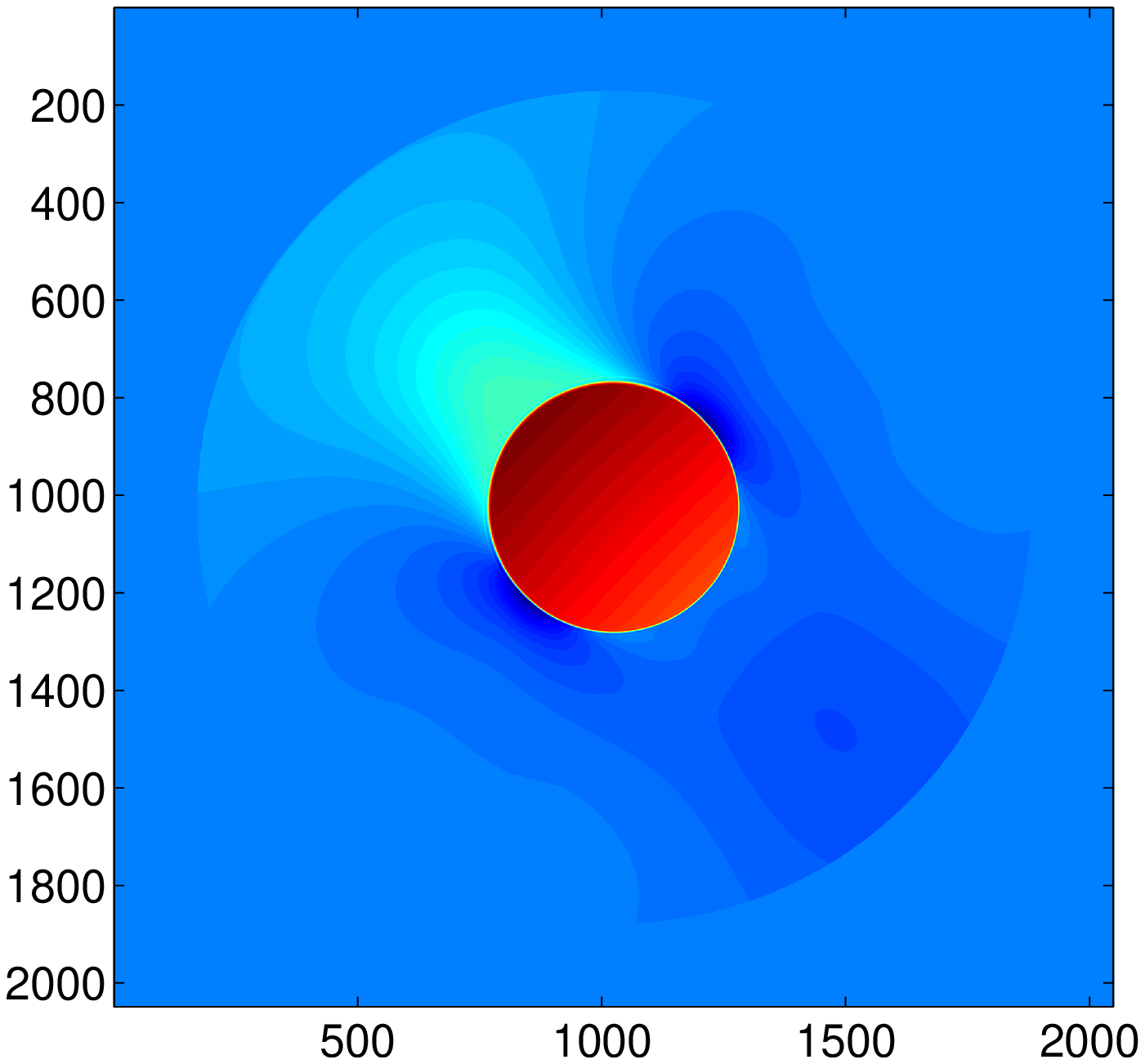}
   \label{subfig:fig2}
}
\hfill
 \subfloat[short for lof][$k=1$, $\epsilon=0.4$]{
   \includegraphics[width=0.3\linewidth]{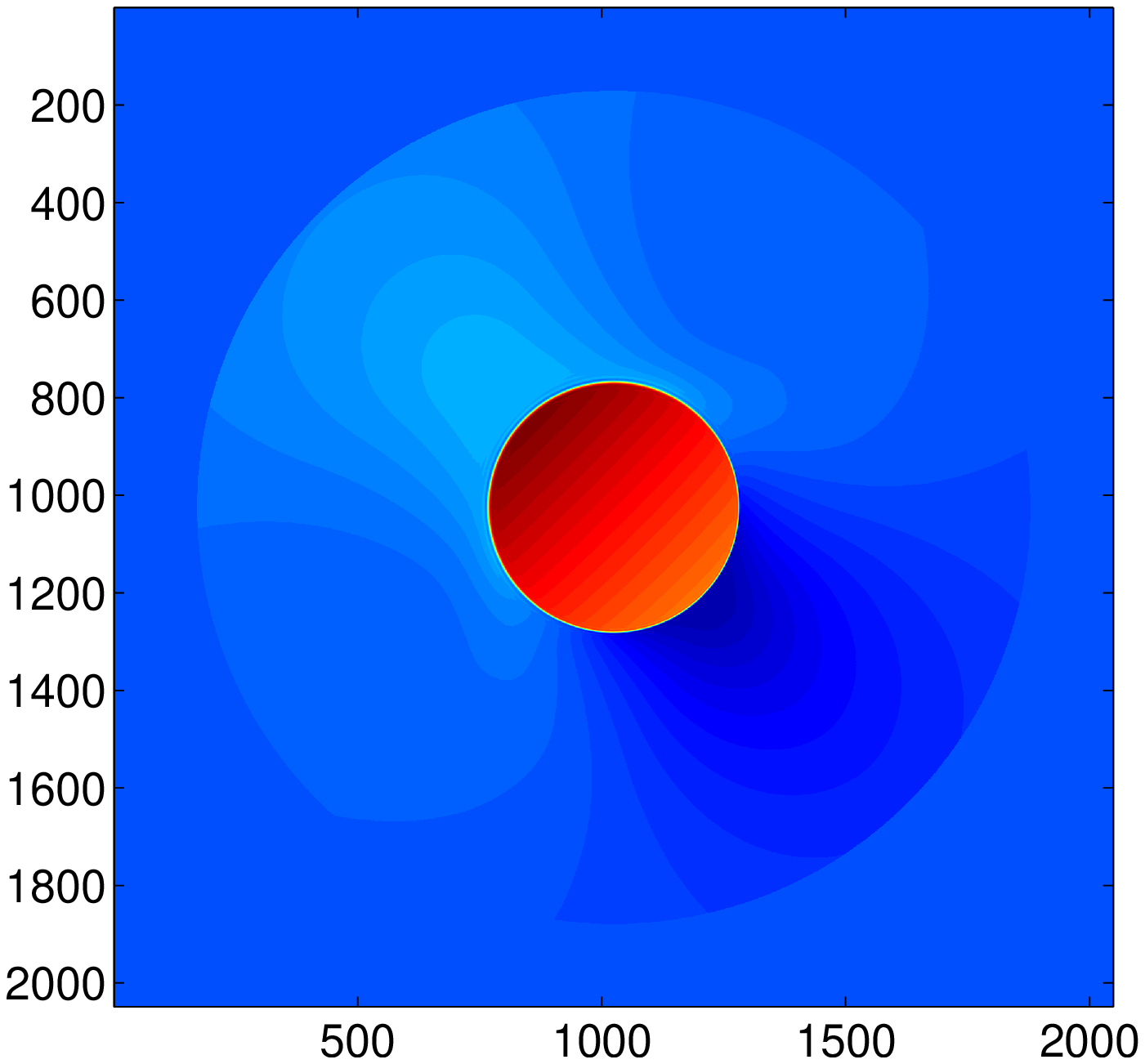}
   \label{subfig:fig2}
} 

\caption[short for lof]{Reconstruction from limited view data, acquired on $\Ga_{\frac{3\pi}{2}}$ (three quarters of the unit circle), using the modified reconstruction operator $\mT$ (cf. \eqref{E:mT}) with the new smoothing function $\chi_{\mathrm{new}}^{k}$ defined in \eqref{eq:new cutoff}-\eqref{eq:new cutoff chi}. The figures illustrate the influence of the length of the smooth transition region (parameter $\epsilon$) on artifact reduction for a fixed smoothing order $1$.} 
\label{fig:3q-2sm_perc_varies}
\end{figure}

\begin{figure}[ht]
\centering
 \subfloat[short for lof][no smoothing]{
   \includegraphics[width=0.3\linewidth]{reconstr_fl_N2048_na2731_nr2048_p75_s0}
   \label{subfig:fig2}   
}
\hfill
 \subfloat[short for lof][$k=1$, $\epsilon=0.4$]{
   \includegraphics[width=0.3\linewidth]{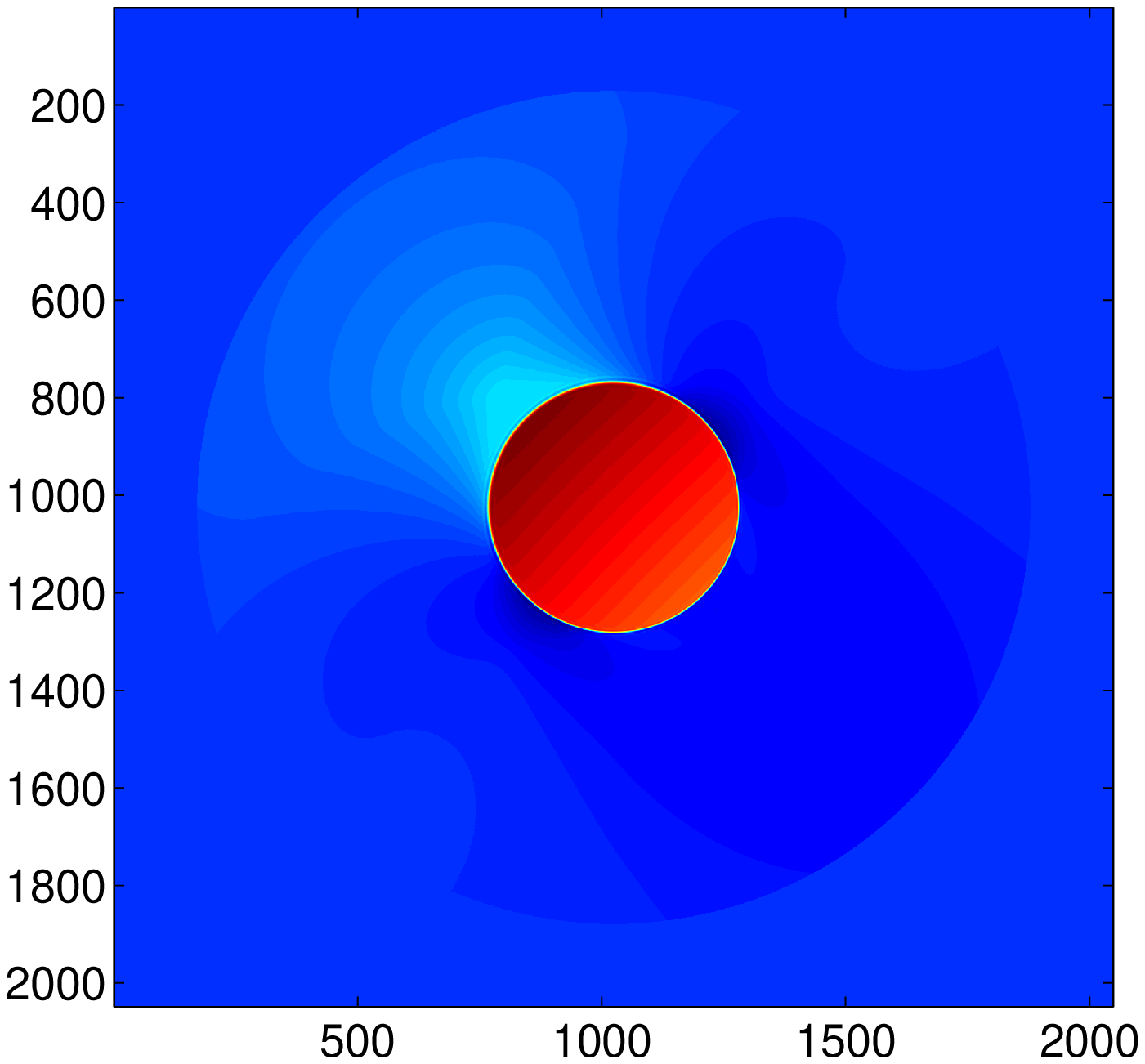}
   \label{subfig:fig3}   
}
\hfill
 \subfloat[short for lof][$k=2$, $\epsilon=0.4$]{
   \includegraphics[width=0.3\linewidth]{reconstr_fl_N2048_na2731_nr2048_p75_ps40_s2}
   \label{subfig:fig4}   
} 
\caption[short for lof]{Reconstruction from limited view data, acquired on $\Ga_{\frac{3\pi}{2}}$ (three quarters of the unit circle), using the modified reconstruction operator $\mT$ (cf. \eqref{E:mT}) with the new smoothing function $\chi_{\mathrm{new}}^{k}$ defined in \eqref{eq:new cutoff}-\eqref{eq:new cutoff chi}. The figures illustrate the influence of the order of smoothing $k$ on artifact reduction for a fixed length of the smooth transition region ($\epsilon=0.4$).}
\label{fig:3q-2sm_order_varies}

\end{figure}


\paragraph{Conclusion.} The above numerical experiments show that our theoretical results directly translate into practical observations. In particular, the proposed artifact reduction technique can lead to a significant improvement of the reconstruction quality if the smoothing function is chosen appropriately.  We will explore more experiments and report in-depth results in a future publication. 


\section*{Acknowledgement}
The work of L.L.B. was supported by the National Science Foundation Major Research Instrumentation Program, grant 1229766. J. F. thanks Eric Todd Quinto for many enlightening discussions about limited data tomography and microlocal analysis over the years as well as for the warm hospitality during several visits at Tufts University. He also acknowledges support from the HC $\O$rsted Postdoc programme, co-funded by Marie Curie Actions. L.V.N.'s research is partially supported by the NSF grant \# DMS 1212125. He is thankful to Professor G. Uhlmann for introducing him to the theory of pseudo-differential operators with singular symbols presented in \cite{MU,GU,AnUhl}, whose spirit inspires this and the previous works \cite{Streak-Artifacts,Artifact-sphere-flat}. He also thanks Professor T. Quinto for the encouragement and helpful comments/suggestions.

\medskip

The authors would like to thank G. Ambartsoumian for sharing his codes in spherical radon transform, some of them are reused in this article's numerical implementations. They also thank Professor P. Stefanov for pointing out some missing references in the initial version of the article.

\appendix

\section{Appendix - Background in microlocal analysis} \label{S:Micro} 

Let $\Og \subset \rN^n$ be an open set, and $\cT^* \Og$ be the cotangent bundle of $\Og$. For simplicity, we can consider $\cT^* \Og$ as $\Og \times \rN^n$. We also denote
$$\cT^* \Og \setminus 0=\{(x,\xi) \in \cT^* \Og: \xi \neq 0 \}. $$
Let $\mD(\Og)=C_0^\infty(\Og)$ and $\mD'(\Og)$ be the standard spaces of test functions and distributions on $\Og$. In this section, we briefly introduce some basic concepts in microlocal analysis, such as wave front set, pseudo-differential operators ($\Psi$DO), and Fourier Integral Operators (FIOs).  Extensive presentations can be found in \cite{hormander71fourier,Ho1-old,TrPseu,TrFour}. The use of microlocal analysis in geometric integral transforms are pioneered in \cite{Guillemin-Zoll,GrUDuke,GrUCon,QT93}. Its extensive uses in the studies of spherical mean transform can be found in many works, see \cite{louis00local,XWAK08,QuintoSONAR,AMP,FQ14,Artifact-sphere-flat}, just to name a few.

\subsection{Wave Front Sets} 
\begin{defi} [Wave Front Set \cite{hormander71fourier}] Let $f \in \mD'(\Og)$ and $(x_0,\xi_0) \in \cT^* \Og \setminus 0$. Then, $f$ is microlocally smooth at $(x_0,\xi_0)$ if there is a function $\varphi \in C_0^\infty(\Og)$ satisfying $\varphi(x_0) \neq 0$ and an open cone $V$ containing $\xi_0$, such that $\mF (\varphi f)$ is rapidly decreasing in $V$. That is, for any $N>0$  there exists a constant $C_N$ such that
$$|\mF(\varphi f)(\xi)| \leq C_N (1+|\xi|)^{-N}, \mbox{ for all } \xi \in V.$$ The {\bf wavefront set} of $f$, denoted by $\wf(f)$, is the complement of the set of all $(x_0,\xi_0) \in \cT^* \Og$ where $f$ is microlocally smooth.
\end{defi} 

\medskip

An element $(x,\xi) \in \wf(f)$ indicates not only the location $x$ but also the {\bf direction} $\xi$ of a singularity of $f$. For example, if $f$ is the characteristic function of an open set $\mO \Subset \Og$ with smooth boundary $\pdh \mO$, then $(x,\xi) \in \wf(f)$ if and only if $x \in \partial \mO$ and $\xi$ is perpendicular to the tangent plane of $\partial \mO$ at $x$. Detailed discussion can be found in \cite{Petersen-Book} and, more briefly, in \cite{FQ14}.

\medskip

Let $\mT$ be a bounded linear operator from $\mE'(\Og_1)$ to $\mD'(\Og_2)$. The following rule provides an estimate of $\wf(f)$ in terms of $\wf(\mT f)$, see \cite[Theorem 2.5.14] {hormander71fourier}: 
\begin{theorem} \label{T:cal-wave} Let $\mu$ be the Schwartz kernel of $\mT$. Assume that $\wf(\mu) \subset (\cT^* \Og_2 \setminus 0) \times (\cT^* \Og_1 \setminus 0)$, then  \begin{eqnarray*} \wf(\mT f) \subset \wf(\mu)' \circ \wf(f). \end{eqnarray*}
\end{theorem}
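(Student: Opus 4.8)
The plan is to derive the inclusion from two classical facts of microlocal calculus: the rule for the wave front set of a product of distributions, and the rule for the wave front set of a pushforward (integration along fibres). Throughout, fix $f \in \mE'(\Og_1)$ and set $K = \supp(f)$, a compact subset of $\Og_1$.

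First I would lift $f$ to the product space. Let $F \in \mD'(\Og_2 \times \Og_1)$ be defined by $F(x,y) = f(y)$; it is smooth in the first variable, so $\wf(F) \subseteq \{(x,0;\, y,\eta) : x \in \Og_2,\ (y,\eta) \in \wf(f)\}$. Thus every covector in $\wf(F)$ has vanishing $\Og_2$-component, whereas by hypothesis every covector in $\wf(\mu)$ has a \emph{nonzero} $\Og_2$-component. Consequently $\wf(\mu) \cap \big(-\wf(F)\big) = \emptyset$, so the product $w := \mu \cdot F$ (which is just the kernel $(x,y) \mapsto \mu(x,y) f(y)$) is well defined in $\mD'(\Og_2 \times \Og_1)$, and the product rule bounds $\wf(w)$ by the union of $\wf(\mu)$, of $\wf(F)$, and of the ``sum'' set of covectors $(x,\xi;\, y,\eta+\eta')$ with $(x,\xi;\, y,\eta) \in \wf(\mu)$ and $(y,\eta') \in \wf(f)$ over a common base point.

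Next I would integrate out the $\Og_1$ variable. Since $\supp(w) \subseteq \Og_2 \times K$ with $K$ compact, fibrewise integration is legitimate, $\mT f = (\pi_{\Og_2})_* w$, and the pushforward rule yields
\[ \wf(\mT f) \subseteq \{(x,\xi) \in \cT^*\Og_2 \setminus 0 : (x,\xi;\, y,0) \in \wf(w) \ \text{for some}\ y \in \Og_1\}. \]
To conclude, take $(x,\xi) \in \wf(\mT f)$ and pick $y$ with $(x,\xi;\, y,0) \in \wf(w)$. This covector cannot lie in $\wf(\mu)$, whose $\Og_1$-components are all nonzero, nor in $\wf(F)$, whose $\Og_2$-components all vanish while here $\xi \neq 0$; hence it comes from the sum set, so $(x,\xi;\, y,0) = (x,\xi;\, y,\eta+\eta')$ with $(x,\xi;\, y,\eta) \in \wf(\mu)$ and $(y,\eta') \in \wf(f)$, forcing $\eta = -\eta'$. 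Therefore $(x,\xi;\, y,\eta') \in \wf(\mu)'$ with $(y,\eta') \in \wf(f)$, i.e. $(x,\xi) \in \wf(\mu)' \circ \wf(f)$.

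The real content is the bookkeeping in the two estimates above: the product rule and the pushforward rule each generate several auxiliary terms involving the zero section, and one must verify that the standing hypothesis $\wf(\mu) \subset (\cT^*\Og_2 \setminus 0) \times (\cT^*\Og_1 \setminus 0)$ is exactly what eliminates all of them except the single diagonal-sum term — this same hypothesis being also what makes the product $\mu \cdot F$ admissible and (together with $\supp f$ compact) what lets $\mT$ act on $\mE'(\Og_1)$. A self-contained alternative would localize with $\varphi \in C_0^\infty$ near $x$ and estimate the conic decay of $\widehat{\varphi\,\mT f}$ directly, splitting the $y$-integral by proximity to $\wf(f)$ and decomposing $\mu$ microlocally; that route avoids the two black boxes but the elementary decay estimates are more tedious, so I would prefer the first approach, citing \cite{hormander71fourier} for the two rules.
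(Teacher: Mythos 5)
Your argument is correct. The paper itself supplies no proof of this statement — it is quoted verbatim from H\"ormander \cite[Theorem 2.5.14]{hormander71fourier} — and what you have written is precisely the standard proof of that cited theorem: pull $f$ back to $\Og_2\times\Og_1$, form the product $\mu\cdot F$ (admissible because covectors in $\wf(F)$ have zero $\Og_2$-component while those in $\wf(\mu)$ do not), and push forward, with the hypothesis $\wf(\mu)\subset(\cT^*\Og_2\setminus 0)\times(\cT^*\Og_1\setminus 0)$ eliminating every term of the product and pushforward bounds except the one giving $\wf(\mu)'\circ\wf(f)$. The only point worth making explicit if you wrote this out in full is that the identity $\mT f=(\pi_{\Og_2})_*(\mu\cdot F)$ for non-smooth $f\in\mE'(\Og_1)$ rests on the sequential continuity of the H\"ormander product under approximation of $f$ by mollification; this is routine and is exactly how the reference handles it.
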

\noindent In the above theorem and elsewhere, $\wf(\mu)'$ is the twisted canonical associated to $\wf(\mu)$
$$\wf(\mu)' =\{(x,\xi; y, -\eta): (x,\xi; y, \eta) \in \wf(\mu) \},$$
and
\begin{eqnarray*} \wf(\mu)' \circ \wf(f) :=\{(x,\xi): (x,\xi; y, \eta) \in \wf(\mu)', \mbox{ for some } (y,\eta) \in \wf(f)\}. \end{eqnarray*}

\medskip

The following theorem provides the product rule for wave front set, see \cite[Theorem 2.5.10]{hormander71fourier}:
\begin{theorem} \label{T:product} Let $u,v$ be two distributions on $\Og$. Then the product $uv$ is well defined unless $(x,\xi) \in \wf(u)$ and $(x,-\xi) \in \wf(v)$ for some $(x,\xi)$. Moreover,
$$\wf(uv) \subset \{(x,\xi+\eta): (x,\xi) \in \wf(u) \mbox{ or } \xi=0, (x,\eta) \in \wf(v) \mbox{ or } \eta =0\}. $$
\end{theorem}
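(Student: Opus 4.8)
The plan is to reduce the statement to a convolution estimate for Fourier transforms, in the spirit of \cite[Ch.~8]{hormander71fourier}. Both assertions are local, so I fix $(x_0,\zeta_0)\in\cT^*\Og\setminus 0$ and a cutoff $\varphi\in C_0^\infty(\Og)$ with $\varphi(x_0)\neq 0$ and arbitrarily small support around $x_0$; then $\varphi u,\varphi v\in\mE'(\Og)$, their Fourier transforms $\widehat{\varphi u},\widehat{\varphi v}$ are smooth with at most polynomial growth, and deciding whether $uv$ is microlocally smooth at $(x_0,\zeta_0)$ amounts to showing that $\widehat{(\varphi u)(\varphi v)}=(2\pi)^{-n}\,\widehat{\varphi u}*\widehat{\varphi v}$ is rapidly decreasing in a conic neighbourhood of $\zeta_0$. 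Throughout, for $w\in\mE'(\Og)$ I write $\Sigma(w)$ for the closed cone of directions in which $\widehat w$ is \emph{not} rapidly decreasing, and $\Sigma_{x_0}(w)=\bigcap_{\psi(x_0)\neq 0}\Sigma(\psi w)$ for the fibre of $\wf(w)$ over $x_0$.

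Assume now $(x_0,\zeta_0)$ is not in the right-hand side. Then $\zeta_0\notin\Sigma_{x_0}(u)$, $\zeta_0\notin\Sigma_{x_0}(v)$, and $\zeta_0$ cannot be written as $\xi+\eta$ with $\xi\in\Sigma_{x_0}(u)\cup\{0\}$ and $\eta\in\Sigma_{x_0}(v)\cup\{0\}$, not both zero. Since the cones $\Sigma(\psi w)$ decrease to $\Sigma_{x_0}(w)$ as $\supp\psi$ shrinks to $\{x_0\}$, a conic-compactness argument lets me choose a single $\varphi$ with $\varphi(x_0)=1$ and small support, an open conic neighbourhood $V$ of $\zeta_0$, and closed cones $\Gamma_u\supset\Sigma(\varphi u)$, $\Gamma_v\supset\Sigma(\varphi v)$ together with slight enlargements $\Gamma_u'\supset\Gamma_u$, $\Gamma_v'\supset\Gamma_v$, such that $\overline V$ is disjoint from each of $\Gamma_u'$, $\Gamma_v'$ and $\Gamma_u'+\Gamma_v'$, and in addition $\Gamma_u'\cap(-\Gamma_v')=\emptyset$. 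This last disjointness is exactly where the hypothesis that no $(x,\xi)\in\wf(u)$ has $(x,-\xi)\in\wf(v)$ is used; note it is also what makes the cone $\Gamma_u'+\Gamma_v'$ nondegenerate (one has $|\xi+\eta|\gtrsim|\xi|+|\eta|$ there), so that it can be separated from a small $V$. The same disjointness guarantees that the convolution $\int\widehat{\varphi u}(\zeta-\eta)\,\widehat{\varphi v}(\eta)\,d\eta$ converges absolutely and locally uniformly in $\zeta$: for $|\eta|$ large, either $\eta\notin\Gamma_v'$ and $\widehat{\varphi v}(\eta)$ decays rapidly, or $\eta\in\Gamma_v'$, in which case $\zeta-\eta$ lies in $-\Gamma_v'$ up to a bounded perturbation, hence outside $\Gamma_u'$, so $\widehat{\varphi u}(\zeta-\eta)$ decays rapidly. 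A routine regularization argument (replacing $u,v$ by $u*\rho_\eg$, $v*\rho_\eg$) then shows the resulting function of polynomial growth is the Fourier transform of a well-defined distribution, independent of the admissible choices — this is the product $uv$.

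For the wave front inclusion I estimate $\widehat{\varphi u}*\widehat{\varphi v}(\zeta)$ for $\zeta\in V$ by splitting the $\eta$-integral, with a small parameter $\delta>0$, over $A=\{|\eta|\le\delta|\zeta|\}$, $B=\{|\zeta-\eta|\le\delta|\zeta|\}$, and $C$ the complement. On $A$ the point $\zeta-\eta$ lies in a narrow conic neighbourhood of $\zeta_0$, hence outside $\Gamma_u'$, so $\widehat{\varphi u}(\zeta-\eta)$ decays rapidly in $|\zeta-\eta|\sim|\zeta|$; integrating it against the polynomially bounded $\widehat{\varphi v}$ over a ball of radius $\sim|\zeta|$ gives rapid decay in $|\zeta|$. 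On $B$ the symmetric argument applies with the roles of $u$ and $v$ interchanged. On $C$ both $|\eta|$ and $|\zeta-\eta|$ are $\gtrsim|\zeta|$, and I split once more: if $\eta\notin\Gamma_v'$ then $\widehat{\varphi v}(\eta)$ decays rapidly in $|\eta|$, while if $\eta\in\Gamma_v'$ then $\zeta-\eta\notin\Gamma_u'$ (because $\zeta\notin\Gamma_u'+\Gamma_v'$) and $\widehat{\varphi u}(\zeta-\eta)$ decays rapidly in $|\zeta-\eta|$; in either case the integrand is the product of a rapidly decreasing function of a variable $\gtrsim|\zeta|$ and of a polynomially bounded one, so its integral over $C$ is again rapidly decreasing in $|\zeta|$. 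Summing the three contributions, $\widehat{\varphi^2 uv}$ is rapidly decreasing on $V$, i.e. $(x_0,\zeta_0)\notin\wf(uv)$, which is the desired inclusion.

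The main obstacle is the bookkeeping in the last step: the cones $\Gamma_u,\Gamma_v$, their enlargements, the neighbourhood $V$ and the constant $\delta$ must be chosen in a mutually compatible way so that the three disjointness relations survive the enlargements and so that, on region $C$, "rapid decay in $|\zeta-\eta|$ or in $|\eta|$" can always be converted into rapid decay in $|\zeta|$; this, together with the conic-compactness selection of a single cutoff $\varphi$ and the verification that the product is well defined and independent of the choices, is where all the care lies, the remaining computations being routine. A more structural alternative is to realize $uv=\iota^{*}(u\otimes v)$ for the diagonal embedding $\iota:\Og\to\Og\times\Og$ and to invoke the standard theorems on the wave front set of a tensor product and on the pullback of a distribution under a map whose transposed differential annihilates no covector in the wave front set: the hypothesis here is precisely the transversality condition $\wf(u\otimes v)\cap N^{*}\Delta=\emptyset$ for the diagonal $\Delta$, and the pullback theorem then yields exactly the stated estimate.
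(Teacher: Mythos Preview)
The paper does not supply its own proof of this theorem: it is quoted in the appendix as background, with a reference to \cite[Theorem~2.5.10]{hormander71fourier}, and is used only as a tool in the proof of Proposition~\ref{P:wavefront}. So there is no ``paper's proof'' to compare against.

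Your argument is correct and is essentially the classical proof from H\"ormander: localize, write the product as a convolution on the Fourier side, use the hypothesis $\wf(u)\cap(-\wf(v))=\emptyset$ to separate the relevant cones and to make the convolution absolutely convergent, and then split the integration domain so that on every piece at least one factor is rapidly decreasing in a variable comparable to $|\zeta|$. The alternative you mention at the end, realizing $uv$ as the pullback of $u\otimes v$ under the diagonal embedding and invoking the tensor-product and pullback wave-front rules, is equally standard and is in fact the way many texts package the same computation. Either route would serve as a self-contained proof of the statement the paper merely cites.
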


The following theorem provides the composition rule for wave front sets, see \cite[Theorem 2.5.15]{hormander71fourier}:

\begin{theorem} \label{T:compose} Let $\mT_1$ and $\mT_2$ be linear transformations whose Schwartz kernels are $\mu_1 \in \mD'(\Og_1 \times \Og_2)$ and $\mu_2 \in \mD'(\Og_2 \times \Og_3)$. We assume that $\wf(\mu_1) \subset (\cT^* \Og_1 \setminus 0) \times (\cT^* \Og_2 \setminus 0)$ and $\wf(\mu_2) \subset (\cT^* \Og_2 \setminus 0) \times (\cT^* \Og_3 \setminus 0)$. Then, the  Schwartz kernel $\mu$ of $\mT_1 \circ \mT_2$ satisfies:
$$\wf(\mu)' \subset \wf(\mu_1)' \circ \wf(\mu_2)' .$$
\end{theorem}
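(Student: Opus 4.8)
The plan is to exhibit $\mu$ as the pushforward of a product of two pullbacks and then chase wave front sets through the three standard operations involved. Set $W=\Og_1\times\Og_2\times\Og_3$ and let $p_{12},p_{23},p_{13}$ be the coordinate projections of $W$ onto $\Og_1\times\Og_2$, $\Og_2\times\Og_3$ and $\Og_1\times\Og_3$; each is a submersion, so the pullbacks $p_{12}^*\mu_1$ and $p_{23}^*\mu_2$ are unconditionally well defined, with
$$\wf(p_{12}^*\mu_1)\subset\{(x,\xi;\,y,\eta;\,z,0):(x,\xi;y,\eta)\in\wf(\mu_1)\},$$
$$\wf(p_{23}^*\mu_2)\subset\{(x,0;\,y,\eta;\,z,\zeta):(y,\eta;z,\zeta)\in\wf(\mu_2)\}.$$
Since, formally, $\mu(x,z)=\int_{\Og_2}\mu_1(x,y)\,\mu_2(y,z)\,dy$, one makes this precise as $\mu=(p_{13})_*(p_{12}^*\mu_1\cdot p_{23}^*\mu_2)$, the integration of the product over the middle variable.

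First I would check that the product $w:=p_{12}^*\mu_1\cdot p_{23}^*\mu_2$ is defined. By \reftheo{T:product} it is enough that no covector $\Xi$ over a point of $W$ satisfies $\Xi\in\wf(p_{12}^*\mu_1)$ and $-\Xi\in\wf(p_{23}^*\mu_2)$; such a $\Xi$ would have to be $(\xi,\eta,0)$ and $(0,\eta',\zeta)$ at once, forcing $\xi=0$, which contradicts the hypothesis $\wf(\mu_1)\subset(\cT^*\Og_1\setminus 0)\times(\cT^*\Og_2\setminus 0)$. Hence $w\in\mD'(W)$, and \reftheo{T:product} also gives
$$\wf(w)\subset\{(x,\xi;\,y,\eta_1+\eta_2;\,z,\zeta):\ (x,\xi;y,\eta_1)\in\wf(\mu_1)\ \text{or}\ (\xi,\eta_1)=0,\ \ (y,\eta_2;z,\zeta)\in\wf(\mu_2)\ \text{or}\ (\eta_2,\zeta)=0\}.$$

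Next I would push $w$ forward under $p_{13}$, i.e.\ integrate it over $y\in\Og_2$. At the distributional level this pushforward needs a properness (or localization) argument, which is precisely the tacit hypothesis under which $\mT_1\circ\mT_2$ possesses a Schwartz kernel; granting it, the standard wave front estimate for pushforward along a projection (see \cite{hormander71fourier}) yields
$$\wf(\mu)\subset\{(x,\xi;\,z,\zeta):\ (x,\xi;\,y,0;\,z,\zeta)\in\wf(w)\ \text{for some}\ y\in\Og_2\}.$$
Combining this with the description of $\wf(w)$: the middle slot being $0$ forces $\eta_1+\eta_2=0$, so writing $\eta=\eta_1=-\eta_2$ we get $(x,\xi;y,\eta)\in\wf(\mu_1)$ or $(\xi,\eta)=0$, and $(y,-\eta;z,\zeta)\in\wf(\mu_2)$ or $(-\eta,\zeta)=0$. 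The two ``zero'' alternatives cannot occur for $(x,\xi;z,\zeta)\in\wf(\mu)$: if $(\xi,\eta)=0$ then, using $\wf(\mu_2)\subset(\cT^*\Og_2\setminus 0)\times(\cT^*\Og_3\setminus 0)$, the surviving condition forces $\zeta=0$, contradicting $(\xi,\zeta)\neq 0$, and symmetrically for the other. Therefore $(x,\xi;y,\eta)\in\wf(\mu_1)$ and $(y,-\eta;z,\zeta)\in\wf(\mu_2)$, i.e.\ $(x,\xi;y,-\eta)\in\wf(\mu_1)'$ and $(y,-\eta;z,-\zeta)\in\wf(\mu_2)'$, whence $(x,\xi;z,-\zeta)\in\wf(\mu_1)'\circ\wf(\mu_2)'$. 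Since $(x,\xi;z,-\zeta)$ is exactly the element of $\wf(\mu)'$ attached to $(x,\xi;z,\zeta)\in\wf(\mu)$, this gives $\wf(\mu)'\subset\wf(\mu_1)'\circ\wf(\mu_2)'$.

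The only delicate point is the distributional pushforward in the middle variable; it is handled by localizing with cutoffs and invoking the support/properness condition implicit in the statement, while everything else is routine bookkeeping with covectors relying only on the pullback, product (\reftheo{T:product}) and pushforward rules for wave front sets together with the two zero-section hypotheses. An equivalent route replaces the pullback--product step by restricting $\mu_1\otimes\mu_2\in\mD'(\Og_1\times\Og_2\times\Og_2\times\Og_3)$ to the partial diagonal $\{y_1=y_2\}$; the same hypotheses guarantee the restriction exists and lead to the identical conclusion.
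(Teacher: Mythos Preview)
The paper does not supply its own proof of this statement; it is quoted in the appendix as background, with a pointer to \cite[Theorem 2.5.15]{hormander71fourier}. Your argument is the standard one and is correct: write the composed kernel as $(p_{13})_*(p_{12}^*\mu_1\cdot p_{23}^*\mu_2)$, use the zero-section hypotheses to check H\"ormander's multiplication condition, and then track covectors through the pushforward, ruling out the degenerate alternatives with the same hypotheses. This is essentially H\"ormander's own proof, so there is nothing to contrast. Your caveat about properness of the pushforward is appropriate and is the one genuine point needing care.
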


\medskip

The following definition helps to quantify the strength of a singularity:
\begin{defi}[Sobolev Wave Front Set \cite{Petersen-Book}] Let $f \in \mD'(\Og)$ and $(x_0,\xi_0) \in \cT^*\Og \setminus 0$. Then $f$ is in the space $H^s$ microlocally at $(x_0,\xi_0)$ if there is a function $\varphi \in C_0^\infty(\Og)$ satisfying $\varphi(x_0) \neq 0$ and a function $u(\xi)$ homogeneous of degree zero and smooth on $\rN^n \setminus 0$ with $u(\xi_0) \neq 0$, such that $$u(\xi) \, \mF (\varphi f)(\xi) \in L^2(\rN^n, (1+|\xi|^2)^s).$$
The {\bf $H^s$-wave front set} of $u$, denoted by $WF_s(u)$, is the complement of the set of all $(x_0,\xi_0) \in \cT^* \Og$ where $u$ is not microlocally in the space $H^s$. 
\end{defi} 
\noindent One can use the Sobolev orders to compare the singularities $(x_1,\xi_1) \in \wf(f_1)$ and $(x_2,\xi_2) \in \wf(f_2)$, where $f_1,f_2$ are two distributions, not necessarily defined on the same set. For example, $(x_1,\xi_1)$ is stronger than $(x_2,\xi_2)$ if there is $s$ such that  $(x_1,\xi_1) \in \wf_s(f_1)$ but  $(x_2,\xi_2)  \not \in \wf_s(f_2)$. 

\medskip

We also introduce the definition of conormal distribution (e.g., \cite{hormander71fourier,GrU-Functional,FLU,Suresh}):
\begin{defi}
Assume that $S \subset \Og$ is a smooth surface of co-dimension $k$. Let $h \in C^\infty(\Og,\rN^k)$ be a defining function for $S$ with $rank(d h) = k$ on $S$. The class $I^r(S)$ consists of the distributions which locally can be written down as a finite sum of oscillatory integrals of the form
$$u(x) = \intl_{\rN^k} e^{i h(x) \cdot \theta} a(x,\theta) \, d\theta, $$
where $a \in S^r(\Og \times \rN^k)$.
\end{defi}
In the above definition and elsewhere in this article, we use the following definition of a symbol:
\begin{defi}[\cite{hormander71fourier}] Let $\Og \subset \rN^n$ be an open set. The space $S^m(\Og \times \rN^N)$ consists of all functions $a \in C^\infty(\Og \times (\rN^N \setminus 0))$ such that for any multi-indices $\ag,\bg$ and $K \Subset \Og$, there is a positive constant $C_{\ag,\bg,K}$ such that
\begin{equation} \label{E:sym-in} |\pdh_x^\ag \pdh_\theta^\bg a(x,\xi)| \leq C_{\ag,\bg,K} (1+ |\xi|)^{m -|\ag|}, \quad \mbox{ for all } (x,\xi)  \in K \times (\rN^N \setminus 0).\end{equation}
The elements of $S^m(\Og \times \rN^N)$ are called symbols of order $m$. 
\end{defi}

We note that if $u \in I^r(S)$, then $\wf(u) \subset N^* S$ (see, e.g, \cite{hormander71fourier}), where $N^*S$ is the conormal bundle of $S$.

\begin{defi} Let $f \in \mD'(\rN^2)$. We say that $(x_0,\xi_0) \in \wf(f)$ is a conormal singularity of order $r$ to the surface $S$ if there is $u \in I^r(S)$ such that
$$(x_0,\xi_0) \not \in \wf(f-u). $$
\end{defi}
One can use the order $r$ to compare two conormal singularities $(x,\xi) \in \wf(f_1)$ (along the surface $S_1$) and $(y,\eta) \in \wf(f_2)$ (along the surface $S_2$), where $f_1,f_2$ are two distributions on $\Og$. For example, $(x,\xi)$ is {\bf weaker} than $(y,\eta)$, if there is $r \in \rN$ such that $(x,\xi)$ is of order $r$ while $(y,\eta)$ is not.

\subsection{Pseudo-Differential Operators ($\Psi$DOs)} \label{A:PDO} Given $a \in S^m(\Og \times \rN^n)$, the operator $\mT: C_0^\infty(\Og) \to C^\infty(\Og)$ defined by the oscillatory integral
\begin{equation} \label{E:PDO} \mT f(x) = \frac{1}{(2\pi)^n} \intl_{\Og} \intl_{\rN^n}  e^{i (x-y) \cdot \xi} \, a(x,\xi) \, f(y) \, d\xi \, dy, \end{equation}
is called a pseudo-differential operator ($\Psi$DO) of order $m$ with the (full) symbol $a(x,\xi)$. 

\medskip

Since the above integral may not converge in the classical sense, the expression in \eqref{E:PDO} needs to be properly defined, see, e.g., \cite[Proposition 1.1.2 ]{hormander71fourier}. Given this proper definition, $\mT$ extends continuously to $\mE'(\Og) \to \mD'(\Og)$. In particular, it can be shown that a   pseudo-differential operator $\mT$ does not generate new singularities. That is, \cite[Page 131]{hormander71fourier} \begin{equation} \label{E:wave-front-PDO} \wf(\mT f) \subset \wf(f).\end{equation}
Moreover, if $f$ is in the space $H^s$ microlocally at $(x_0,\xi_0)$ then $\mT f$ is in space $H^{s-m}$ microlocally at the same element $(x_0,\xi_0)$, see \cite{Petersen-Book,TrPseu}.

\medskip

We will denote $\mu \in I^m(\Delta)$ if $\mu$ is the Schwartz kernel of a pseudo-differential operator of order $m$. Let us define a technical term that is used in the statement of Theorem~\ref{T:Main1}~a):
\begin{defi} \label{D:PDO}
Let $A \subset \Delta$ be a conic set that is open in the induced topology of $\Delta$. We say that {\bf near $A$, $\mu$ is microlocally in the space $I^m(\Delta)$ with the symbol $\sg$} if the following holds: for each element $(x^*,\xi^*;x^*,\xi^*) \in A$ there exist $\mu_* \in I^m(\Delta)$ such that $$(x^*,\xi^*; x^*, \xi^*) \not \in \wf(\mu-\mu_*)',$$ and the symbol of $\mu_*$ is equal to $\sg(x,\xi)$ in a conic neighborhood if $(x^*,\xi^*)$. 
\end{defi}

The following result was proved in \cite{Streak-Artifacts}, which is used to explain our result in Sections~\ref{S:2D}~\&~\ref{S:3D}:
\begin{lemma} \label{L:Pet}
Let $\mT: \mE'(\Og) \to \mD'(\Og)$ be a linear operator whose Schwartz kernel $\mu \in \mD'(\Og \times \Og)$ satisfies $\wf(\mu)' \subset (\cT^*\Og\setminus 0)\times (\cT^*\Og\setminus 0)$.  Assume that $\mu$ is microlocally in $I^m(\Delta)$ near $A$ with the symbol $\sg(x,\xi)$. Let $(x^*,\xi^*) \in \cT^*\Og \setminus 0$ such that $(x^*,\xi^*;x^*,\xi^*) \in A$ and in a conic neighborhood of $(x^*,\xi^*)$
$$|\sg(x,\xi)| \geq C(1+|\xi|)^m, \quad |\xi| \geq 1.$$
Assume further that \begin{equation} \label{E:wave-x*} \{(x^*,\xi^*;y,\eta) \in \wf(\mu)': (y,\eta) \in \wf(f)\} \subset \Delta.\end{equation} Then, for any $s \in \rN$, \begin{equation*} (x^*,\xi^*) \in WF_s(f) \mbox{ if and only if } (x^*,\xi^*) \in WF_{s-m}(\mT f).\end{equation*}
\end{lemma}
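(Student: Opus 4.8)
The plan is to reduce the statement to classical microlocal elliptic regularity for a \emph{genuine} pseudo-differential operator, using the hypothesis \eqref{E:wave-x*} to discard every off-diagonal contribution to $\wf(\mT f)$ at $(x^*,\xi^*)$. By Definition~\ref{D:PDO}, since $(x^*,\xi^*;x^*,\xi^*) \in A$, there is a pseudo-differential operator $\mT_*$ of order $m$, with Schwartz kernel $\mu_* \in I^m(\Delta)$, such that $(x^*,\xi^*;x^*,\xi^*) \notin \wf(\mu-\mu_*)'$ and whose symbol coincides with $\sg$ in a conic neighborhood of $(x^*,\xi^*)$. I would then split $\mT f = \mT_* f + (\mT-\mT_*)f$ and treat the two pieces separately.

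For the error piece, the first step is to show that $(\mT-\mT_*)f$ is microlocally smooth at $(x^*,\xi^*)$. I would observe that $\wf(\mu-\mu_*)' \subset \wf(\mu)' \cup \wf(\mu_*)'$, that $\wf(\mu_*)' \subset \Delta$ because $\mT_*$ is a $\Psi$DO, and that the single diagonal element $(x^*,\xi^*;x^*,\xi^*)$ has been removed by construction. Consequently, any $(y,\eta) \in \wf(f)$ with $(x^*,\xi^*;y,\eta) \in \wf(\mu-\mu_*)'$ would have to lie in $\wf(\mu)'$ and, by \eqref{E:wave-x*}, in $\Delta$, forcing $(y,\eta)=(x^*,\xi^*)$, which is excluded. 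Hence the set $\{(x^*,\xi^*;y,\eta)\in\wf(\mu-\mu_*)': (y,\eta)\in\wf(f)\}$ is empty, and Theorem~\ref{T:cal-wave} applied to $\mT-\mT_*$ yields $(x^*,\xi^*)\notin\wf((\mT-\mT_*)f)$. In particular $(x^*,\xi^*)\notin\wf_t((\mT-\mT_*)f)$ for every $t$, so $(x^*,\xi^*)\in\wf_{s-m}(\mT f)$ if and only if $(x^*,\xi^*)\in\wf_{s-m}(\mT_* f)$.

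It then remains to establish the equivalence $(x^*,\xi^*)\in\wf_s(f) \Leftrightarrow (x^*,\xi^*)\in\wf_{s-m}(\mT_* f)$ for the genuine operator $\mT_*$. One implication is the microlocal Sobolev mapping property recalled after \eqref{E:wave-front-PDO}: since $\mT_*$ has order $m$, microlocal $H^s$-regularity of $f$ at $(x^*,\xi^*)$ gives microlocal $H^{s-m}$-regularity of $\mT_* f$ there. For the converse I would invoke ellipticity: the bound $|\sg(x,\xi)|\geq C(1+|\xi|)^m$ in a conic neighborhood of $(x^*,\xi^*)$, together with the fact that the symbol of $\mT_*$ equals $\sg$ there, shows $\mT_*$ is elliptic of order $m$ at $(x^*,\xi^*)$. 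Standard $\Psi$DO parametrix theory then produces an operator $Q$ of order $-m$ with $Q\mT_*-I$ microlocally smoothing near $(x^*,\xi^*)$; writing $f = Q\mT_* f - (Q\mT_*-I)f$ and using the order $-m$ mapping property of $Q$ gives the reverse implication. Combining the two displays completes the proof.

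The main obstacle is the error-piece argument of the second paragraph: the delicate point is not the elliptic regularity (which is textbook) but verifying that the \emph{full} off-diagonal wavefront of $\mu$, as well as the leftover $\mu-\mu_*$, contributes nothing at $(x^*,\xi^*)$. This is exactly where hypothesis \eqref{E:wave-x*} is indispensable, and care is needed because $\wf(\mu)'$ is in general much larger than $\Delta$ (it contains the artifact relations $\Llg$); only after intersecting with $\wf(f)$ does it collapse onto the diagonal at $(x^*,\xi^*)$.
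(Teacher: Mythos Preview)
The paper does not actually prove Lemma~\ref{L:Pet}; it merely states the result and cites \cite{Streak-Artifacts} for the proof. Your argument is correct and is precisely the expected one: split off a genuine $\Psi$DO $\mT_*$ using Definition~\ref{D:PDO}, kill the remainder $(\mT-\mT_*)f$ microlocally at $(x^*,\xi^*)$ via Theorem~\ref{T:cal-wave} and hypothesis~\eqref{E:wave-x*}, and then invoke microlocal elliptic regularity for $\mT_*$. The wavefront bookkeeping in your second paragraph---in particular the inclusion $\wf(\mu-\mu_*)'\subset\wf(\mu)'\cup\wf(\mu_*)'$ combined with $(x^*,\xi^*;x^*,\xi^*)\notin\wf(\mu-\mu_*)'$ and \eqref{E:wave-x*}---is exactly the mechanism that makes this work, and you have handled it cleanly.
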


\subsection{Fourier Integral Operators (FIOs)}  \label{A:FIOs} 

In this section, we introduce some special Fourier distributions which are needed in this article. The reader is referred to, e.g., \cite{hormander71fourier,TrFour,Duistermaat} for the general theory of the topic. Let $X$ and $Y$ be two manifolds of dimension $n_X$ and $n_Y$, respectively, and $\Llg$ be a homogeneous canonical relation in $(\cT^*X \setminus 0) \times (\cT^* Y \setminus 0)$. Then, there is  an associated class of Fourier distributions of order $m$, denoted by $I^m(\Llg)$. Each element of $I^m(\Llg)$, called a Fourier integral distribution of order $m$, is a distribution $\mu \in \mD'(\Og \times \Og)$ such that it can be locally written down in the form
$$\mu(x,y) = \intl_{\rN^N} e^{i \phi(x,y,\llg)} \, a(x,y,\llg) \,d\llg.$$
Here, \footnote{The order $m+(n_X+n_Y-2N)/4$ of $a$ specified here is due to, e.g., \cite[pp. 115]{hormander71fourier}.}  \begin{equation} \label{E:Ord} a(x,y,\llg) \in S^{m+(n_X+n_Y-2N)/4} (X \times Y\times \rN^N), \end{equation} and  $\phi$ is a phase function associated to $\Llg$. That is,
$\phi=\phi(x,y,\llg) \in C^\infty(X \times Y \times (\rN^N \setminus 0))$ satisfies \begin{itemize} \item[1)] $\phi$ is homogeneous of degree $1$ in $\llg$, \item[2)] $\phi_x \neq 0$ and $\phi_y\neq 0$ on the set $$\mC=\{(x,y,\llg) \in X \times Y \times (\rN^N \setminus 0): d_\llg\phi =0\},$$
\end{itemize}
and \begin{equation*} \{(x,d_x \phi; y, - d_y \phi): (x,y,\llg) \in \mC \} \subset  \Llg. \end{equation*}
The following result gives us a rule for the wave front set of a Fourier integral distribution (see, e.g., \cite[Theorem 3.2.6]{hormander71fourier}):
\begin{theorem} \label{T:wave-FIO} Let $\mu \in I^m(\Llg)$ then $$\wf(\mu)' \subset \Llg.$$ \end{theorem}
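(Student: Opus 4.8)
\medskip
\noindent\textbf{Proof proposal.} The plan is to argue by non-stationary phase, as one does for a general Fourier integral distribution. By the definition of $I^m(\Llg)$ and a partition of unity it suffices to treat a single local representation $\mu(x,y)=\intl_{\rN^N}e^{i\phi(x,y,\llg)}\,a(x,y,\llg)\,d\llg$, with $a$ a symbol of the indicated order and $\phi$ a phase function associated to $\Llg$; recall that by the very definition of a phase function the critical set $\Llg_\phi=\{(x,d_x\phi;\,y,-d_y\phi):d_\llg\phi=0\}$ is contained in $\Llg$. Fix $(x_0,\xi_0;y_0,\eta_0)\notin\Llg$. Writing $w=(x,y)$ and $\zeta=(\xi_0,-\eta_0)$ and viewing $\mu\in\mD'(\Og\times\Og)$, the desired conclusion $(x_0,\xi_0;y_0,\eta_0)\notin\wf(\mu)'$ is precisely $(w_0,\zeta)\notin\wf(\mu)$; moreover, since $(x_0,\xi_0;y_0,\eta_0)\notin\Llg\supseteq\Llg_\phi$, there is no $\llg\neq0$ with $d_\llg\phi(w_0,\llg)=0$ and $d_w\phi(w_0,\llg)=\zeta$.

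First I would fix $\varphi\in C_0^\infty$ with $\varphi(w_0)\neq0$, supported in a small neighborhood of $w_0$, choose a small open cone $\Ga$ around $\zeta$, and estimate $\mF(\varphi\mu)(\theta)=\intl e^{i(\phi(w,\llg)-\langle w,\theta\rangle)}\,\varphi(w)\,a(w,\llg)\,dw\,d\llg$ for $\theta\in\Ga$ as $|\theta|\to\infty$. The phase $\Psi(w,\llg;\theta)=\phi(w,\llg)-\langle w,\theta\rangle$ has $d_\llg\Psi=d_\llg\phi$ and $d_w\Psi=d_w\phi-\theta$, and I would split the $(w,\llg)$-integral by a finite partition of unity in $\llg$ into three kinds of region, using that $d_\llg\phi$ is homogeneous of degree $0$ and $d_w\phi$ of degree $1$ in $\llg$: (I) $|\llg|\le\varepsilon|\theta|$, where $|d_w\phi-\theta|\ge|\theta|-C|\llg|\ge\tfrac12|\theta|$ for $\varepsilon$ small (this includes the compactly supported part of the partition); (II) $|\llg|\ge\varepsilon|\theta|$ with $\llg/|\llg|$ in a narrow cone on which $d_\llg\phi(w_0,\cdot)$ stays bounded away from $0$, whence $|d_\llg\phi(w,\llg)|\ge c>0$ after shrinking $\supp\varphi$; (III) $|\llg|\ge\varepsilon|\theta|$ with $\llg/|\llg|$ near a zero of $d_\llg\phi(w_0,\cdot)$, where, by the hypothesis above, $d_w\phi(w_0,\llg)$ is a nonzero homogeneous-degree-one vector pointing away from the direction of $\zeta$, so that $|d_w\phi(w,\llg)-\theta|\ge c\,(|\llg|+|\theta|)$ after shrinking $\supp\varphi$ and $\Ga$.

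Then I would integrate by parts. In regions (I) and (III), using $\widetilde L=\tfrac1i\,|d_w\phi-\theta|^{-2}\,\overline{(d_w\phi-\theta)}\cdot\partial_w$ (which fixes $e^{i\Psi}$), each of $M$ applications of the transpose produces a factor $\lesssim(1+|\llg|+|\theta|)^{-1}$ and leaves the $\llg$-order of the amplitude unchanged (the extra $w$-derivatives only hit $\varphi$ and $a$), so the integrand is $\lesssim(1+|\llg|+|\theta|)^{m-M}$ and $\int_{\rN^N}(1+|\llg|+|\theta|)^{m-M}\,d\llg\lesssim(1+|\theta|)^{m-M+N}=O(|\theta|^{-\infty})$. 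In region (II), using $L=\tfrac1i\,|d_\llg\phi|^{-2}\,\overline{d_\llg\phi}\cdot\partial_\llg$, each application lowers the $\llg$-order of the amplitude by one (because $d_\llg\phi$ is homogeneous of degree $0$) without changing the phase, and since here $|\llg|\ge\varepsilon|\theta|$ one gets after $M$ steps $\int_{|\llg|\ge\varepsilon|\theta|}(1+|\llg|)^{m-M}\,d\llg\lesssim(1+|\theta|)^{m-M+N}=O(|\theta|^{-\infty})$. Summing the finitely many pieces gives $\mF(\varphi\mu)(\theta)=O(|\theta|^{-\infty})$ on $\Ga$, hence $(w_0,\zeta)\notin\wf(\mu)$, which proves $\wf(\mu)'\subset\Llg$.

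I expect the main obstacle to be the construction in the second step: one must arrange the partition of unity in $\llg$ together with the accompanying uniform lower bounds so that on every piece either $d_\llg\phi$ or $d_w\phi-\theta$ is controlled from below, juggling the two competing homogeneities in $\llg$ (degree $0$ versus degree $1$); and one must take care that integration by parts in $\llg$, which on its own produces no gain in $\theta$, is used only on regions where $|\llg|\gtrsim|\theta|$, so that the polynomial volume growth in $\llg$ is converted into rapid decay in $\theta$.
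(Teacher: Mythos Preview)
Your proposal is essentially correct and is the standard non-stationary phase argument for the wave front set of a Fourier integral distribution. Note, however, that the paper does not actually prove this statement: it is quoted in the Appendix as a background result with a reference to \cite[Theorem~3.2.6]{hormander71fourier}, so there is no ``paper's own proof'' to compare against. What you have written is, up to bookkeeping, the argument one finds in that reference (and in the standard texts of Duistermaat and Tr\`eves), namely localize to one oscillatory representation, test against $e^{-i\langle w,\theta\rangle}$, split the $\llg$-integration into regions where either $d_\llg\phi$ or $d_w\phi-\theta$ has a uniform lower bound, and integrate by parts.

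One small point worth tightening: as stated, the inclusion $\wf(\mu)'\subset\Llg\subset(\cT^*X\setminus 0)\times(\cT^*Y\setminus 0)$ also asserts that $\wf(\mu)$ contains no covectors with vanishing $x$- or $y$-component. Your argument implicitly covers this (in regions (I) and (III) the lower bound on $|d_w\phi-\theta|$ still holds when one component of $\theta$ vanishes, because the phase function hypothesis $\phi_x\neq 0$, $\phi_y\neq 0$ on the critical set forces the relevant component of $d_w\phi$ to be nonzero there), but it would be worth saying so explicitly.
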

\noindent The linear operator $\mT: \mD(Y) \to \mD'(X)$ whose Schwartz kernel is $\mu$ is a Fourier integral operator (FIO) of order $m$. With a slight abuse of notation, we also write $\mT \in I^{m}(\Llg)$.

\medskip

The following technical term is used in the statement of Theorem~\ref{T:Main1}~b):
\begin{defi} \label{D:FIO} Let $A \subset \Llg$ be an open conic set in the induced topology of $\Llg$. We say that {\bf near $A$, $\mu$ is microlocally in the space $I^m(\Llg)$} if the following holds: for each element $(x^*,\xi^*;y^*,\eta^*) \in A$ there exists $\mu_* \in I^m(\Llg)$ such that $$(x^*,\xi^*; y^*, \eta^*) \not \in \wf(\mu-\mu_*)'.$$ 
\end{defi}


\medskip

The following result (see \cite[Theorem 4.3.2]{hormander71fourier}) is used to analyze the mapping properties of the FIOs discussed below: 

\begin{theorem} \label{T:Ho}
Let $\Og \subset \rN^n$ and $\Llg \subset (\cT^* \Og \setminus 0) \times (\cT^* \Og \setminus 0)$ be a homogeneous canonical relation such that both of its left and right projections on $\Og$ have surjective differentials. Assume that the differentials of the left and right projections $\Llg \to \cT^* \Og$ have rank at least $l + n$. Then, every $\mT \in I^m(\Llg)$ maps continuously from $H^s_{comp}(\Og)$ to $H_{loc}^{s- m - \frac{n-l}{2}}(\Og)$. 
\end{theorem}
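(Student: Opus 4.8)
\medskip\noindent This is H\"ormander's $L^{2}$--continuity theorem for Fourier integral operators, \cite[Theorem~4.3.2]{hormander71fourier}, which is only quoted above; the following is the argument one would give. The plan is first to reduce the Sobolev mapping property to a single $L^{2}$--boundedness statement, and then to use the rank hypotheses to bring the Schwartz kernel of $\mT$ into a model form on which $L^{2}$--boundedness is transparent.

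For the reduction, fix $s$. Since $\langle D\rangle^{a}$ is a $\Psi$DO of order $a$, and composing a $\Psi$DO with an element of $I^{m}(\Llg)$ again produces an element of $I^{m+a}(\Llg)$ with the \emph{same} canonical relation $\Llg$ (composition with the diagonal is transversal, so the orders simply add), the operator
$$\langle D\rangle^{\,s-m-\frac{n-l}{2}}\;\mT\;\langle D\rangle^{-s}$$
belongs to $I^{-\frac{n-l}{2}}(\Llg)$, up to cut--offs mediating between $H^{s}_{comp}$ and $H^{s}_{loc}$ which do not affect orders. It therefore suffices to show that every element of $I^{-\frac{n-l}{2}}(\Llg)$ maps $L^{2}_{comp}(\Og)$ continuously into $L^{2}_{loc}(\Og)$.

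To prove this I would use a microlocal partition of unity to localize the kernel near a single point of $\Llg$. The hypotheses --- the base projections $\Llg\to\Og$ are submersions, and the projections $\Llg\to\cT^{*}\Og$ have rank at least $l+n$, hence fibres of dimension at most $n-l$ --- are exactly what is needed to choose local canonical coordinates $x=(x',x'')$, $y=(y',y'')$ with $x',y'\in\rN^{l}$ and $x'',y''\in\rN^{n-l}$, in which, modulo a smoothing operator, the localized piece of $\mT$ takes the model form
$$\mT_{0}u(x',x'')=\intl_{\rN^{l}}\intl_{\rN^{n}} e^{i\phi_{0}(x',y',\xi')}\,a(x,y,\xi')\,u(y',y'')\,dy\,d\xi',$$
where $\phi_{0}$ generates a local canonical graph in the $l$ ``active'' variables, there is no oscillation in the $n-l$ remaining variables, and, by \eqref{E:Ord}, the amplitude satisfies $a\in S^{0}\big((\Og\times\Og)\times\rN^{l}\big)$. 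One then reads $\mT_{0}$ as an order--zero FIO associated to a local canonical graph in the active variables, carrying an \emph{operator--valued} amplitude: for frozen $(x',\xi')$ the map $u(y',\cdot)\mapsto\intl_{\rN^{n-l}}a(x',\cdot,y',y'',\xi')\,u(y',y'')\,dy''$ is integration against a bounded compactly supported kernel, hence Hilbert--Schmidt on $L^{2}(\rN^{n-l})$, uniformly in $(x',\xi')$ together with all derivatives because $a\in S^{0}$. The operator--valued Calder\'on--Vaillancourt theorem, together with the Eskin--H\"ormander $L^{2}$ bound for order--zero FIOs associated to a local canonical graph, then gives $\|\mT_{0}u\|_{L^{2}}\leq C\|u\|_{L^{2}}$. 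Summing over the partition of unity and absorbing the smoothing remainders yields the desired $L^{2}$ bound, and with the reduction above this completes the proof.

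The hard part is the normal--form step: extracting the canonical coordinates and the model operator $\mT_{0}$ from the two rank hypotheses, and checking that the discrepancy between the true localized kernel and $\mT_{0}$ is a genuinely smoothing operator, so that it is irrelevant for the Sobolev bookkeeping. Once the normal form is in place the loss $\tfrac{n-l}{2}$ is dictated by the amplitude order in \eqref{E:Ord}, and the remaining $L^{2}$ estimate is the classical operator--valued symbol argument.
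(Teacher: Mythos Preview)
The paper does not prove this theorem at all; it is quoted verbatim from \cite[Theorem~4.3.2]{hormander71fourier} and used only as a black box to deduce the Sobolev mapping properties in Lemmas~\ref{L:x0} and \ref{L:Ho3D}. So there is nothing to compare your argument against on the paper's side.

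Your sketch is a reasonable outline of the standard proof and the overall strategy is correct: reduce to $L^{2}$--boundedness of operators in $I^{-(n-l)/2}(\Llg)$ by conjugating with Bessel potentials, then use the rank hypothesis to find a local normal form in which $l$ variables carry a genuine canonical graph and the remaining $n-l$ are inert, so that the amplitude order computed from \eqref{E:Ord} comes out to zero and the classical $L^{2}$ estimate for nondegenerate FIOs (plus a Hilbert--Schmidt bound in the inert variables) finishes. A couple of points you gloss over that would need care in a full write-up: (i) the assertion that one can achieve a phase $\phi_{0}(x',y',\xi')$ with \emph{no} dependence on $x'',y''$ is stronger than what the rank condition alone gives --- what one actually obtains is a phase for which the mixed Hessian in the active variables is nondegenerate, with the inert variables entering as parameters, and one must then check that this parameter dependence is tame enough for the operator-valued estimate; (ii) H\"ormander's own proof in \cite{hormander71fourier} does not use an operator-valued Calder\'on--Vaillancourt argument but rather an almost-orthogonality (Cotlar--Stein) estimate after a dyadic decomposition. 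Your route is a legitimate alternative, but you should be aware it is a reorganization of the original, not a reproduction of it.
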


\medskip

In the discussion below, we introduce two classes of Fourier distributions whose canonical relation is defined by rotations around a point or around tangent lines of a smooth curve, respectively.

\medskip

\subsubsection{\bf Fourier distributions associated to a point}  \label{S:FIO-point} Let us now introduce the class of Fourier distributions whose canonical relation is defined by the rotations around a point. For this type of distribution, we only consider $\Og \subset \rN^2$. 

\medskip

Let $x_0 \in \rN^2$ such that $x_0 \not \in \Og$. We define the following homogeneous canonical relation in $(\cT^* \setminus 0) \times (\cT^* \Og \setminus 0)$ $$\Llg_{x_0}=\{(x,\tau (x-x_0);y, \tau(y-y_0)) \in (\cT^* \Og \setminus 0) \times (\cT^* \Og \setminus 0): |x-x_0| = |y-x_0| \}.$$
That is, $\Llg_{x_0}$ is defined by rotating $(y,\eta=\tau(y-x_0)) \in \cT^*\Og \setminus 0$, which pass through $x_0$, around $x_0$. 

In Section~\ref{S:2D}, we work with the following explicit form of an element $\mu \in I^m(\Llg_{x_0})$:
\begin{equation*} \mu(x,y) =  \intl_{\rN} e^{i (|x-x_0|^2-|y-x_0|^2) \, \llg} a(x,y,\llg)  d\llg,\end{equation*} where $a \in S^{m+\frac{1}{2}}(\Og \times \Og \times \rN)$. 

\medskip

The following result is a direct consequence of Theorem~\ref{T:Ho}:
\begin{lemma} \label{L:x0}
Let $\mT \in I^m(\Llg_{x_0})$. Then, $\mT$ maps continuously from $H^s_{comp}(\Og)$ to $H^{s-m-\frac{1}{2}}_{loc} (\Og)$.
\end{lemma}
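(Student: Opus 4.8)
The plan is to realize $\Llg_{x_0}$ as a canonical relation satisfying the hypotheses of Theorem~\ref{T:Ho}, and then read off the Sobolev mapping order from that theorem with the correct value of the ``rank deficiency'' parameter $l$. Here $n=2$, so the generic FIO loss for a canonical relation with both projections submersive and of maximal rank is $m+\tfrac{n}{2}=m+1$; we need to show that $\Llg_{x_0}$ is only $\tfrac12$ worse than a local graph, i.e.\ that the differentials of the projections $\Llg_{x_0}\to\cT^*\Og$ have rank $2n-1=3=l+n$ with $l=1$, which yields the loss $m+\tfrac{n-l}{2}=m+\tfrac12$.

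First I would set up coordinates on $\Llg_{x_0}$. Since $x_0\notin\overline\Og$, on $\cT^*\Og\setminus 0$ the ``radial'' covectors $\eta=\tau(y-x_0)$ with $\tau\neq 0$ form a smooth conic submanifold; parametrize a point of $\Llg_{x_0}$ by $(y,\tau)$ together with the position of $x$ on the circle $\{|x-x_0|=|y-x_0|\}$, i.e.\ by an angular variable $\theta$, so $\Llg_{x_0}$ is a smooth conic manifold of dimension $2n=4$ (coordinates $y\in\Og$, $\tau\in\rN\setminus 0$, $\theta\in\uS^1$). In these coordinates the right projection $\pi_R(x,\tau(x-x_0);y,\tau(y-x_0))=(y,\tau(y-x_0))$ is, up to the smooth diffeomorphism $(y,\tau)\mapsto(y,\tau(y-x_0))$ onto its (3-dimensional) image, just $(y,\tau,\theta)\mapsto(y,\tau)$: a submersion, and its differential has rank $3$. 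By the rotational symmetry of the relation in the pair $(x,y)$, the left projection $\pi_L$ has exactly the same structure, hence also rank $3$. Thus both projections are submersions onto $3$-dimensional submanifolds of $\cT^*\Og$ (so their differentials are surjective onto the tangent spaces of those images, which is what is needed), and their differentials have rank $3=1+n$.

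Next I would verify the explicit oscillatory representation given just before the lemma is consistent with $\mT\in I^m(\Llg_{x_0})$: the phase $\phi(x,y,\llg)=(|x-x_0|^2-|y-x_0|^2)\llg$ has $d_\llg\phi=|x-x_0|^2-|y-x_0|^2$, whose zero set is precisely $\{|x-x_0|=|y-x_0|\}$, and $d_x\phi=2(x-x_0)\llg$, $d_y\phi=-2(y-x_0)\llg$ are nonvanishing there, giving $\{(x,d_x\phi;y,-d_y\phi)\}=\Llg_{x_0}$; with one phase variable $N=1$ and $n_X=n_Y=2$, the amplitude order $m+\tfrac12$ matches $m+(n_X+n_Y-2N)/4=m+\tfrac12$, confirming the bookkeeping. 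Finally, apply Theorem~\ref{T:Ho} with $l=1$, $n=2$: every $\mT\in I^m(\Llg_{x_0})$ maps $H^s_{comp}(\Og)\to H^{s-m-(n-l)/2}_{loc}(\Og)=H^{s-m-\frac12}_{loc}(\Og)$, which is the claim.

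The main obstacle I anticipate is the rank computation for the projections: one must check that the differential of $\pi_R$ (resp.\ $\pi_L$) restricted to $\Llg_{x_0}$ genuinely drops rank by exactly $1$ and no more — equivalently, that moving $x$ around the circle while holding $(y,\eta)$ fixed is the \emph{only} direction killed by $d\pi_R$. This is where the hypothesis $x_0\notin\overline\Og$ is essential (it guarantees $y\neq x_0$ so the parametrization by $(y,\tau,\theta)$ is a genuine smooth chart and the circle $\{|x-x_0|=|y-x_0|\}$ is a smooth curve meeting $\Og$), and it is the point I would write out most carefully; the rest is routine once the canonical-relation geometry is pinned down.
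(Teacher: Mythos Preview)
Your proposal is correct and takes exactly the same approach as the paper, which simply invokes Theorem~\ref{T:Ho} with $l=1$ in a single line. Your version supplies the rank computation the paper omits---parametrizing $\Llg_{x_0}$ by $(y,\tau,\theta)$ and observing that each projection kills only the $\theta$-direction---which is the right justification for $l=1$; note only that the surjectivity hypothesis in Theorem~\ref{T:Ho} is on the base projections $\Llg_{x_0}\to\Og$, not on the projections to $\cT^*\Og$, but this is immediate from your coordinates since $x$ ranges over an open subset of $\Og$ as $(y,\tau,\theta)$ vary.
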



\begin{proof}
We only need to apply Theorem~\ref{T:Ho} with $l=1$. 
\end{proof}

The following result is used to analyze the strength of artifacts in Section~\ref{S:2D}. Its proof is almost exactly the same as that of \cite[Corollary 2.15]{Streak-Artifacts}. We skip it for the sake of brevity.
\begin{lemma} \label{L:Ho}
Let $\mT: \mE'(\Og) \to \mD'(\Og)$ be a linear operator whose Schwartz kernel $\mu \in \mD'(\Og \times \Og)$ satisfies $\wf(\mu) \subset (\cT^*\Og\setminus 0)\times (\cT^*\Og\setminus 0)$.  Assume that $\mu$ is microlocally in $I^m(\Llg)$ near an open conic set $A \subset \Llg$. Let $(x^*,\xi^*) \in \wf(\mT f) \cap \pi_L(\Llg)$ such that $$\{(x^*,\xi^*;y,\eta) \in \wf(\mu)' \cup \Llg: (y,\eta) \in \wf(f)\} \mbox{ is a compact subset of  A}.$$ If $(x^*,\xi^*) \in WF_{s}(\mT f)$, then there is $(y^*,\eta^*) \in \cT^*\Og \cap \Llg$ such that $$ (x^*,\xi^*;y^*,\eta^*) \in \Llg \mbox{ and } (y^*, \eta^*) \in WF_{s+m+\frac{1}{2}}(f).$$
\end{lemma}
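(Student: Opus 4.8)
The plan is to argue by contraposition. Suppose that $(y^*,\eta^*)\notin WF_{s+m+\frac{1}{2}}(f)$ for \emph{every} $(y^*,\eta^*)$ with $(x^*,\xi^*;y^*,\eta^*)\in\Llg$; the goal is then to show $(x^*,\xi^*)\notin WF_{s}(\mT f)$. Write $K=\{(x^*,\xi^*;y,\eta)\in\wf(\mu)'\cup\Llg:(y,\eta)\in\wf(f)\}$, which by hypothesis is a compact (conic) subset of $A\subset\Llg$, and let $K_R=\{(y,\eta):(x^*,\xi^*;y,\eta)\in K\}$ be its projection onto the second factor. Every point of $K_R$ is $\Llg$-related to $(x^*,\xi^*)$, so by the standing assumption $K_R\cap WF_{s+m+\frac{1}{2}}(f)=\emptyset$; since microlocal $H^{s+m+\frac{1}{2}}$-regularity is an open and conic condition and $K_R$ is compact, there is an open conic neighborhood $W$ of $K_R$ on which $f$ is microlocally in $H^{s+m+\frac{1}{2}}$.

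First I would split $f$ microlocally: choose a pseudo-differential operator $Q$ of order $0$ that is microlocally the identity on a conic neighborhood $W'\Subset W$ of $K_R$ and microlocally trivial outside $W$, and set $f_1=Qf$, $f_2=(I-Q)f$, so that $\wf(f_1)\subset W$ and $\wf(f_2)\subset\wf(f)\setminus W'$. For $\mT f_2$: by Theorem~\ref{T:cal-wave}, $\wf(\mT f_2)\subset\wf(\mu)'\circ\wf(f_2)$; if $(x^*,\xi^*)$ lay in this set there would be $(y,\eta)\in\wf(f_2)\subset\wf(f)$ with $(x^*,\xi^*;y,\eta)\in\wf(\mu)'\subset K$, forcing $(y,\eta)\in K_R\subset W'$, which contradicts $(y,\eta)\notin W'$. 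Hence $(x^*,\xi^*)\notin\wf(\mT f_2)$, so $\mT f_2$ is microlocally smooth, in particular microlocally in $H^{s}$, at $(x^*,\xi^*)$.

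It remains to handle $\mT f_1$. The part of $\wf(\mu)'$ that can transport $\wf(f_1)$ to $(x^*,\xi^*)$, namely $\{(x^*,\xi^*;y,\eta)\in\wf(\mu)':(y,\eta)\in W\cap\wf(f)\}$, is contained in $K\subset A$, where by Definition~\ref{D:FIO} $\mu$ agrees microlocally with elements of $I^m(\Llg)$. I would cover the compact set $K$ by finitely many open conic sets $U_1,\dots,U_N$ on each of which $\mu=\mu_j+\rho_j$ with $\mu_j\in I^m(\Llg)$ and $(x^*,\xi^*;\cdot)$-fibre of $\wf(\rho_j)'$ avoiding $\overline W$, and take a microlocal partition of unity $\{Q_j\}$ of order $0$ in the $(y,\eta)$-variables subordinate to the $U_j$, so that $f_1=\sum_j Q_jf_1$ modulo a term whose wavefront set misses the fibre over $(x^*,\xi^*)$. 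Each $\mT Q_jf_1$ then differs from $\mT_jQ_jf_1$ (where $\mT_j$ has kernel $\mu_j$) by a term microlocally smooth at $(x^*,\xi^*)$. Since $\mT_j\in I^m(\Llg)$ is a Fourier integral operator associated to a rotation about a point, the microlocal version of Lemma~\ref{L:x0} — equivalently Theorem~\ref{T:Ho} with $l=1$ — gives that $\mT_j$ carries microlocal-$H^{s+m+\frac{1}{2}}$ regularity at the $\Llg$-antecedents of $(x^*,\xi^*)$ into microlocal-$H^{s}$ regularity at $(x^*,\xi^*)$; as $\wf(Q_jf_1)\subset W$ where $f_1$ is microlocally in $H^{s+m+\frac{1}{2}}$, we get that $\mT_jQ_jf_1$ is microlocally in $H^{s}$ near $(x^*,\xi^*)$. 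Summing the finitely many contributions and the smooth remainders shows $\mT f_1$, hence $\mT f=\mT f_1+\mT f_2$, is microlocally in $H^{s}$ at $(x^*,\xi^*)$, contradicting $(x^*,\xi^*)\in WF_{s}(\mT f)$.

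I expect the last step to be the main obstacle: organizing the finite cover of the compact relevant piece of $\wf(\mu)'$, choosing the $\Psi$DO partition $\{Q_j\}$ fine enough that every error term $\rho_j$ and every commutator remainder is genuinely microlocally smooth at $(x^*,\xi^*)$, and keeping track of the orders of the several local Fourier-integral representatives $\mu_j\in I^m(\Llg)$ at once. This is precisely the bookkeeping carried out in \cite[Corollary 2.15]{Streak-Artifacts}, and the compactness hypothesis on $K$ is exactly what reduces the argument to finitely many such pieces.
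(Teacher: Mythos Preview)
The paper does not actually prove Lemma~\ref{L:Ho}; it simply states that the proof ``is almost exactly the same as that of \cite[Corollary 2.15]{Streak-Artifacts}'' and skips it. Your argument by contraposition---splitting $f$ microlocally via an order-zero $\Psi$DO so that one piece is handled by the wavefront calculus of Theorem~\ref{T:cal-wave} and the other by the Sobolev mapping property of Lemma~\ref{L:x0}/Theorem~\ref{T:Ho}---is exactly the standard route and is, as you yourself note, precisely the bookkeeping of the cited corollary. One small notational slip: writing ``$(x^*,\xi^*;y,\eta)\in\wf(\mu)'\subset K$'' is not literally correct, since $\wf(\mu)'$ is not contained in $K$; what you mean (and what the argument needs) is that such a pair lies in $\wf(\mu)'$ \emph{and hence} in $K$ by the definition of $K$. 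Also, for the $\mT f_1$ piece you can streamline: once $\wf(f_1)\subset W$ and $f$ is microlocally $H^{s+m+1/2}$ on $W$, the distribution $f_1$ is microlocally $H^{s+m+1/2}$ everywhere, so (after arranging $f_1$ to be compactly supported, e.g.\ by choosing $Q$ properly supported) the global mapping statement of Lemma~\ref{L:x0} already yields $\mT_j f_1\in H^s_{loc}$ without the finite cover and partition $\{Q_j\}$; but your more granular version is also fine.
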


\medskip

The following result is useful to analyze the artifacts when the original singularities are conormal. Its proof is almost exactly the same as that of \cite[Theorem 2.16]{Streak-Artifacts}. We skip it for the sake of brevity.
\begin{lemma} \label{L:Spread}
Suppose that all the assumptions in Lemma~\ref{L:Ho} hold. Assume further that:
\begin{itemize}
\item [1)] There are at most finitely many $y^* \in \Og$ such that $$(x^*,\xi^*; y^*, \eta^*= \tau(y^* - x_0)) \in \Llg \mbox{ and } (y^*,\eta^*) \in \wf(f).$$
\item [2)] Each such $(y^*,\eta^*)$ is a conormal singularity of order $r$ along a curve $\mC$ whose contact order with $\uS(x_0,|x-x_0|=|y-x_0|)$ is exactly $1$.\footnote{We note here that the contact order is always at least $1$, since both curves are perpendicular to $\eta^*$ at $y^*$. Therefore, the condition on the contact order is quite generic.}
 \end{itemize}
Then, $(x^*,\xi^*)$ is a conormal singularity of order at most $m+r$ along the circle $\uS(x_0, |x^*-x_0|)$.
\end{lemma}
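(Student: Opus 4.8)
The plan is to reduce, by microlocalization, to a single clean composition of a Fourier integral operator associated with $\Llg_{x_0}$ and a conormal distribution, and then to evaluate that composition by two nested stationary phase reductions, the second of which is nondegenerate precisely because of the contact hypothesis. First I would use assumption (1) to fix the finitely many $(y^*_j,\eta^*_j)$, $j=1,\dots,N$, with $(x^*,\xi^*;y^*_j,\eta^*_j)\in\Llg$ and $(y^*_j,\eta^*_j)\in\wf(f)$; by the hypotheses inherited from \reflemm{L:Ho} the set $\{(x^*,\xi^*;y,\eta)\in\wf(\mu)'\cup\Llg:\,(y,\eta)\in\wf(f)\}$ is a compact subset of $A$, so near the relevant points $\mu$ may be replaced by a genuine $\mu_*\in I^m(\Llg_{x_0})$ without affecting $\wf(\mT f)$ at $(x^*,\xi^*)$. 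Choosing microlocal cutoffs $\psi_j$ near $(y^*_j,\eta^*_j)$, \reftheo{T:cal-wave} gives, microlocally near $(x^*,\xi^*)$,
$$\mT f \equiv \sum_{j=1}^N \mT_*(\psi_j f) \pmod{C^\infty},$$
and by assumption (2) each $\psi_j f$ is, microlocally near $(y^*_j,\eta^*_j)$, a conormal distribution in $I^r(\mC_j)$. Since a finite sum of conormal distributions of order $\le m+r$ along a fixed curve is again of order $\le m+r$, it suffices to treat a single $g\in I^r(\mC)$, with $\mC$ having contact of order exactly $1$ with $\uS(x_0,\rho_0)$ at $y^*$, where $\rho_0:=|x^*-x_0|=|y^*-x_0|$, and to show that $\mT_* g$ is conormal of order $\le m+r$ along $\uS(x_0,\rho_0)$ near $(x^*,\xi^*)$.

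Next I would introduce polar coordinates $x=x_0+\rho\,\omega(\theta)$ centered at $x_0$, in which $\uS(x_0,\rho_0)=\{\rho=\rho_0\}$ and, by the explicit form recalled in \refsec{S:FIO-point}, $\mu_*(x,y)=\int_{\rN} e^{i(\rho^2-\rho'^2)\lambda}\,a(\rho,\theta,\rho',\theta',\lambda)\,d\lambda$, whose phase does not involve the angular variables. Writing $\mC$ locally as $\{\rho'=\psi(\theta')\}$, the contact hypothesis reads $\psi(\theta')=\rho_0+\tfrac{c}{2}(\theta'-\theta^*)^2+O((\theta'-\theta^*)^3)$ with $c\neq0$, and $g(y)=\int_{\rN} e^{i(\rho'-\psi(\theta'))\sigma}\,b(\rho',\theta',\sigma)\,d\sigma$ with $b\in S^{r}$. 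Substituting into $\mT_* g(x)=\int \mu_*(x,y)\,g(y)\,dy$ (the polar Jacobian being smooth and nonvanishing on the relevant set) produces an oscillatory integral in $(\rho',\theta',\lambda,\sigma)$ whose phase is stationary in $(\rho',\sigma)$ on $\{\sigma=2\rho'\lambda,\ \rho'=\psi(\theta')\}$ with nondegenerate Hessian (determinant $-1$). Performing stationary phase in $(\rho',\sigma)$ collapses this to an oscillatory integral over $(\theta',\lambda)$ with phase $(\rho^2-\psi(\theta')^2)\lambda$ and a new amplitude of the expected order.

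The decisive step is then the $\theta'$ integration. As a function of $\theta'$ the reduced phase $\rho^2-\psi(\theta')^2$ has a nondegenerate critical point at $\theta'=\theta^*$ exactly because $\psi''(\theta^*)=c\neq0$, with Hessian comparable to $-2\rho_0 c\,\lambda$; this is the only place where the assumption of contact of order \emph{exactly} $1$ is used. A one-variable stationary phase in $\theta'$ removes it, leaving, modulo a term microlocally smooth near $(x^*,\xi^*)$, a distribution $\int_{\rN} e^{i(\rho^2-\rho_0^2)\lambda}\,\tilde a(\rho,\theta,\lambda)\,d\lambda$, which is conormal along $\{\rho^2=\rho_0^2\}=\uS(x_0,\rho_0)$. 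Bookkeeping the orders through the two stationary-phase reductions together with the normalization conventions for $I^m(\Llg_{x_0})$ and for conormal classes (Appendix) shows the resulting conormal order is at most $m+r$; summing the finitely many pieces preserves this bound, and the reduction of the first paragraph then gives the claim. The argument is, as noted in the text, essentially that of \cite[Theorem~2.16]{Streak-Artifacts}.

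I expect the main obstacle to be precisely this last piece of bookkeeping: one must check that the $(\rho',\sigma)$ reduction is always nondegenerate and the $\theta'$ reduction is nondegenerate exactly under $c\neq0$, and that the accumulated $\tfrac14$-shifts from the FIO normalization, the $-\tfrac12$ from the one-variable stationary phase, and the convention linking $I^r(\mC)$ to symbol order combine to yield exactly $m+r$. A higher order of contact would degenerate the $\theta'$-phase and alter the gain, which is why the genericity condition cannot be dropped.
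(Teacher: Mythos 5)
Your proposal is correct, and it reconstructs essentially the argument the paper intends: the paper itself omits the proof of Lemma~\ref{L:Spread}, deferring to \cite[Theorem~2.16]{Streak-Artifacts}, and your two-stage stationary-phase reduction (first in $(\rho',\sigma)$ with Hessian determinant $-1$, then in $\theta'$ where the nondegeneracy is exactly the contact-order-one hypothesis) is the computation behind that result, transplanted to the circular geometry. The bookkeeping you worry about does close up with the paper's conventions: $a\in S^{m+\frac{1}{2}}$ for $I^m(\Llg_{x_0})$ and $b\in S^{r}$ for $I^r(\mC)$, the factor $|\llg|$ from rescaling $\sigma=\llg s$ cancels against the $|\llg|^{-1}$ from the two-variable stationary phase, and the final $|\llg|^{-\frac{1}{2}}$ from the $\theta'$-integration yields a symbol of order $m+r+\frac{1}{2}-\frac{1}{2}=m+r$, as claimed; the only point left implicit (and standard) is that the regions where $\sigma/\llg$ is not comparable to $2\rho'$ contribute smoothing terms by non-stationary phase.
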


\medskip

\subsubsection{\bf Fourier distributions associated to a smooth curve} Let us consider $\Og \subset \rN^3$. We introduce a class of Fourier distributions, whose canonical relation is defined by the rotations around tangent lines of a smooth curve. This class of Fourier distributions appears in the statement and proof of Theorem~\ref{T:Main2}~b). Let $\ga$ be a closed smooth curve in $\rN^3$ parametrized by the parameter $s$. Assume that $\ga \cap \Og = \emptyset$. We define the following homogeneous canonical relation in $(\cT^* \Og \setminus 0) \times (\cT^* \Og \setminus 0)$
\begin{multline*}\Llg_\ga = \big \{(x,\tau \, (x-\ga(s)); \, y,\tau \, (y-\ga(s))): |x-\ga(s)| = |y-\ga(s)|,\\ \left<x-\ga(s),\ga'(s) \right> = \left<y-\ga(s),\ga'(s) \right>, x \in \Og, y \in \Og, s \in \rN, 0 \neq \tau \in \rN \big \}.\end{multline*}
That is, $\Llg_\ga$ is defined by rotating an element $(y,\tau \, (y-\ga(s)))$, that passes through $z=\ga(s)$, around the the tangent line of $\ga$ at $z$. In Section~\ref{S:3D}, we make use of this class $I^m(\Llg_\ga)$. We state here some needed basic facts of this class. 

\medskip

The following property is a direct consequence of Theorem~\ref{T:Ho} (for $l=1$):
\begin{lemma}
Assume that $\mF \in I^{m}(\Llg)$. Then, $\mF$ is a continuous map from $H^{s}_{com}(\Og) \to H_{loc}^{s-m-\frac{1}{2}}(\Og)$. 
\end{lemma}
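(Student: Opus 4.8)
The plan is to deduce the statement directly from Theorem~\ref{T:Ho}, exactly as the analogous Lemma~\ref{L:x0} was deduced for the two-dimensional class $I^m(\Llg_{x_0})$. To apply that theorem I must verify that the canonical relation $\Llg = \Llg_\ga \subset (\cT^*\Og\setminus 0)\times(\cT^*\Og\setminus 0)$ is a homogeneous canonical relation whose left and right projections onto $\Og$ have surjective differentials, and that the differentials of the left and right projections $\Llg \to \cT^*\Og$ have rank at least $l+n$ with $n=3$ and the correct choice of $l$; then Theorem~\ref{T:Ho} gives continuity $H^s_{comp}(\Og)\to H^{s-m-(n-l)/2}_{loc}(\Og)$, and I want this exponent to equal $s-m-\tfrac12$, i.e. $(n-l)/2 = \tfrac12$, i.e. $l = n-1 = 2$.

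First I would describe $\Llg_\ga$ more concretely. A point of $\Llg_\ga$ is determined by the data $(s,\tau,x,y)$ subject to the two scalar constraints $|x-\ga(s)|=|y-\ga(s)|$ and $\langle x-\ga(s),\ga'(s)\rangle = \langle y-\ga(s),\ga'(s)\rangle$; since $x,y$ range over a $3$-dimensional $\Og$ and $(s,\tau)$ contribute $2$ more, $\Llg_\ga$ is a manifold of dimension $3+3+2-2 = 6 = 2n$, as required for a canonical relation in a $2n$-dimensional symplectic manifold. The left projection sends such a point to $(x,\tau(x-\ga(s)))\in\cT^*\Og$; given any $(x,\xi)$ with $\xi\neq 0$ one can solve: the line $\ell$ through $x$ in direction $\xi$ meets $\ga$'s ``mirror'' condition for a discrete set of parameters $s$ (generically nonempty, and in any case the relevant $x$ lie in $\pi_L(\Llg_\ga)$ by construction), so the left projection is onto its image with surjective differential; symmetrically for the right projection. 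The key computation is the rank of $d\pi_L$ and $d\pi_R$ as maps $\Llg_\ga\to\cT^*\Og$: I would parametrize $\Llg_\ga$ near a generic point by $(x,s,\tau)$ together with the point $y$ determined (up to finitely many choices) by the two constraints — note $y$ is \emph{not} free, the two constraints cut the naive $4$-dimensional $y$-fiber down, but one degree of freedom survives (rotation of $y$ about the tangent line $\ga(s)+\rN\ga'(s)$ on the sphere $|y-\ga(s)|=|x-\ga(s)|$). Thus the fiber of $\pi_L$ over a point of $\cT^*\Og$ is $1$-dimensional, so $\operatorname{rank} d\pi_L = \dim\Llg_\ga - 1 = 5 = l+n$ with $l=2$. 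By the evident left–right symmetry of the defining conditions, $\operatorname{rank} d\pi_R = 5$ as well.

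With these two rank facts in hand, Theorem~\ref{T:Ho} applies with $n=3$, $l=2$, yielding that every $\mF\in I^m(\Llg_\ga)$ maps $H^s_{comp}(\Og)$ continuously to $H^{s-m-(3-2)/2}_{loc}(\Og) = H^{s-m-\frac12}_{loc}(\Og)$, which is the assertion. The main obstacle is the rank/transversality bookkeeping: one must check that the two scalar constraints defining $\Llg_\ga$ are independent (so that $\Llg_\ga$ is genuinely a smooth $2n$-manifold) and, more delicately, that the fibers of both projections are exactly one-dimensional — equivalently that, apart from the single rotational degree of freedom about the tangent line, the remaining data is rigid. Independence of the constraints follows from $\ga'(s)\neq 0$ together with $x\neq\ga(s)$ (which holds since $\ga\cap\Og=\emptyset$), and the one-dimensionality of the fibers is the infinitesimal version of the geometric description already given in the text: $(x,\xi;y,\eta)\in\Llg_\ga$ iff $(y,\eta)$ is obtained from $(x,\xi)$ by a rotation about the tangent line of $\ga$ at the common boundary point, a one-parameter family. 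Once this is granted the conclusion is immediate; I would simply invoke Theorem~\ref{T:Ho} with $l=1$ — wait, rather with the value of $l$ making $(n-l)/2=\tfrac12$, namely $l=n-1=2$ — and conclude.
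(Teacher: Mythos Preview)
Your approach is exactly the paper's: invoke Theorem~\ref{T:Ho} directly. The paper's entire proof is the parenthetical remark ``(for $l=1$)'' preceding the statement, with no further detail. Your argument supplies the bookkeeping the paper omits --- dimension count for $\Llg_\ga$, the one-dimensional rotational fiber of $\pi_L$ and $\pi_R$, hence $\operatorname{rank}d\pi_{L,R}=5=n+l$ with $n=3$, $l=2$ --- and in doing so you have caught a slip in the paper: the correct value is $l=2$, not $l=1$, since $(n-l)/2=\tfrac12$ forces $l=n-1=2$. (The paper's ``$l=1$'' is presumably copied over from the two-dimensional Lemma~\ref{L:x0}, where $n=2$ and $l=1$ is indeed right.) Your own momentary ``$l=1$'' at the very end, immediately self-corrected, is harmless; just delete it in the final write-up.
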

We note that:
$$\pi_L(\Llg) = \pi_{R}(\Llg) =  \big \{(x,\tau \, (x-\ga(s))): x \in \Og, s \in \rN, 0 \neq \tau \in \rN \big \}.$$
The following result is a microlocal version of the above result, which is used in Section~\ref{S:3D} to analyze the strength of artifacts. Its proof is almost exactly the same as that of \cite[Corollary 2.15]{Streak-Artifacts}. We skip it for the sake of brevity.
\begin{lemma} \label{L:Ho3D}
Let $\mT: \mE'(\Og) \to \mD'(\Og)$ be a linear operator whose Schwartz kernel $\mu \in \mD'(\Og \times \Og)$ satisfies $\wf(\mu) \subset (\cT^*\Og\setminus 0)\times (\cT^*\Og\setminus 0)$.  Assume that $\mu$ is microlocally in $I^m(\Llg)$ near an open conic set $A \subset \Llg$. Let $(x^*,\xi^*) \in \wf(\mT f) \cap \pi_L(\Llg)$ such that $$\{(x^*,\xi^*;y,\eta) \in \wf(\mu)' \cup \Llg: (y,\eta) \in \wf(f)\} \mbox{ is a compact subset of  A}.$$ If $(x^*,\xi^*) \in WF_{s}(\mT f)$, then there is $(y^*,\eta^*) \in \pi_R(\Llg)$ such that $$ (x^*,\xi^*;y^*,\eta^*) \in \Llg \mbox{ and } (y^*, \eta^*) \in WF_{s+m+\frac{1}{2}}(f).$$
\end{lemma}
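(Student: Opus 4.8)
The plan is to argue by contraposition: assuming that $(y,\eta)\notin WF_{s+m+\frac{1}{2}}(f)$ for every $(y,\eta)$ with $(x^*,\xi^*;y,\eta)\in\Llg$, I will deduce that $(x^*,\xi^*)\notin WF_s(\mT f)$. Write
$$K:=\{(x^*,\xi^*;y,\eta)\in\wf(\mu)'\cup\Llg:\ (y,\eta)\in\wf(f)\},$$
a compact subset of $A$ by hypothesis, and let $K_f$ be its image under the right projection. Since $K\subset A\subset\Llg$, every pair in $K$ actually lies in $\Llg$, so each $(y,\eta)\in K_f$ satisfies $(x^*,\xi^*;y,\eta)\in\Llg$; the contraposition hypothesis therefore gives $K_f\cap WF_{s+m+\frac{1}{2}}(f)=\emptyset$. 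As $WF_{s+m+\frac{1}{2}}(f)$ is closed and $K_f$ is compact, I can fix open conic sets $W'\Subset W$ with $K_f\subset W'$ and $W\cap WF_{s+m+\frac{1}{2}}(f)=\emptyset$; in other words, $f$ is microlocally in $H^{s+m+\frac{1}{2}}$ throughout $W$.

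Next I would turn $\mu$ into a genuine Fourier integral kernel near $K$. Because $\mu$ is microlocally in $I^m(\Llg)$ near $A$ in the sense of Definition~\ref{D:FIO} and $K\subset A$ is compact, a partition of unity over $K$ produces $\mu_1\in I^m(\Llg)$ and an open conic neighbourhood $\mathcal{N}$ of $K$ in $(\cT^* \Og \setminus 0)\times(\cT^* \Og \setminus 0)$ with $\wf(\mu-\mu_1)'\cap\mathcal{N}=\emptyset$. Split $\mT=\mT_1+\mT_2$, where $\mT_1,\mT_2$ have Schwartz kernels $\mu_1$ and $\mu_2:=\mu-\mu_1$. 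If some $(y,\eta)\in\wf(f)$ satisfied $(x^*,\xi^*;y,\eta)\in\wf(\mu_2)'$, then, using $\wf(\mu_2)'\subset\wf(\mu)'\cup\wf(\mu_1)'\subset\wf(\mu)'\cup\Llg$ (the latter inclusion by Theorem~\ref{T:wave-FIO}), this pair would lie in $K\subset\mathcal{N}$, contradicting $\wf(\mu-\mu_1)'\cap\mathcal{N}=\emptyset$. Hence, by Theorem~\ref{T:cal-wave}, $(x^*,\xi^*)\notin\wf(\mT_2 f)$, i.e.\ $\mT_2 f$ is smooth near $(x^*,\xi^*)$.

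It remains to treat $\mT_1 f$. Choose a properly supported pseudo-differential operator $P$ of order $0$ whose symbol equals $1$ on $W'$ and vanishes outside $W$. Then the symbol of $I-P$ vanishes on $W'$, so $\wf((I-P)f)$ is disjoint from $W'$ by pseudo-locality, whereas $Pf$ is microlocally in $H^{s+m+\frac{1}{2}}$ at every covector (on $W$ because $f$ is, and off $W$ because $P$ is smoothing there), so $Pf\in H^{s+m+\frac{1}{2}}_{comp}(\Og)$. Since $\mT_1\in I^m(\Llg)$, the $H^{s+m+\frac{1}{2}}_{comp}\to H^{s}_{loc}$ mapping property of operators in $I^m(\Llg)$ (a consequence of Theorem~\ref{T:Ho}) yields $\mT_1 Pf\in H^{s}_{loc}(\Og)$. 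On the other hand, any $(y,\eta)\in\wf((I-P)f)\subset\wf(f)$ with $(x^*,\xi^*;y,\eta)\in\wf(\mu_1)'\subset\Llg$ would belong to $K_f\subset W'$, which is impossible; hence, by Theorem~\ref{T:cal-wave} again, $\mT_1((I-P)f)$ is smooth near $(x^*,\xi^*)$. Adding the three pieces, $\mT f=\mT_1 Pf+\mT_1((I-P)f)+\mT_2 f$ is microlocally in $H^s$ at $(x^*,\xi^*)$, i.e.\ $(x^*,\xi^*)\notin WF_s(\mT f)$, which closes the contraposition.

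Most of this is standard microlocal book-keeping; the argument is, as the paper indicates, essentially identical to the two-dimensional Lemma~\ref{L:Ho} and to \cite[Corollary 2.15]{Streak-Artifacts}, with $\Llg$ the curve-rotation relation in place of the point-rotation relation. The only point demanding care is that, after the two splittings $\mu=\mu_1+\mu_2$ and $f=Pf+(I-P)f$, every residual wave-front contribution to $\mT f$ at $(x^*,\xi^*)$ is trapped inside the compact set $K$; this is precisely where the compactness hypothesis on $K$, together with the closedness of $\wf(\mu)'$ and of $WF_{s+m+\frac{1}{2}}(f)$, is indispensable, and I expect it to be the main (if modest) obstacle. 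The three-dimensional geometry enters only through the $H^{s}_{comp}\to H^{s-m-\frac{1}{2}}_{loc}$ continuity of $I^m(\Llg)$, which has already been recorded.
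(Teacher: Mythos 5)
Your argument is correct and is essentially the proof the paper intends: the paper itself omits it, deferring to \cite[Corollary 2.15]{Streak-Artifacts}, and that proof is exactly this contraposition via the two splittings $\mu=\mu_1+\mu_2$ (with $\mu_1\in I^m(\Llg)$ agreeing with $\mu$ microlocally near the compact set $K$) and $f=Pf+(I-P)f$, finished off by Theorem~\ref{T:cal-wave} and the $H^{s+m+\frac{1}{2}}_{comp}\to H^{s}_{loc}$ continuity of operators in $I^m(\Llg)$. One cosmetic adjustment: take the symbol of $P$ supported in some intermediate conic set $W''$ with $W'\Subset W''\Subset W$, so that the essential support of $P$ stays away from $\pdh W$, where $f$ could still fail to be microlocally $H^{s+m+\frac{1}{2}}$; with that, $WF_{s+m+\frac{1}{2}}(Pf)\subset WF_{s+m+\frac{1}{2}}(f)\cap \overline{W''}=\emptyset$ and your claim $Pf\in H^{s+m+\frac{1}{2}}_{comp}(\Og)$ holds as stated.
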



\def\dbar{\leavevmode\hbox to 0pt{\hskip.2ex \accent"16\hss}d}

\end{document}